\documentclass[11pt]{article}

\usepackage{mathrsfs}
\usepackage{amssymb}
\usepackage{amsmath}
\usepackage{bbm,bm}
\usepackage{multirow}
\usepackage{booktabs}
\usepackage{graphicx}
\usepackage{enumerate}
\usepackage{graphicx}
\usepackage{color}
\usepackage{hyperref}
\usepackage{stmaryrd}
\usepackage{xcolor}
\usepackage{comment}
\usepackage{epstopdf}
\usepackage{amsthm}
\usepackage{mathrsfs}
\usepackage{relsize}
\usepackage{mwe} 

\SetSymbolFont{stmry}{bold}{U}{stmry}{m}{n}

\hypersetup{
    colorlinks,
    linkcolor={red!50!black},
    citecolor={blue!50!black},
    urlcolor={blue!80!black}
}
\numberwithin{equation}{section}

\theoremstyle{plain}
\newtheorem{theorem}{Theorem}[section] 
\newtheorem{lemma}[theorem]{Lemma}     

\newtheorem{assumption}[theorem]{Assumption}
\theoremstyle{remark}
\newtheorem*{remark}{Remark}

\newcommand\rd{\mathrm{d}}
\newcommand{\boundr}{{\rm R}}

\DeclareMathAlphabet\mathbcal{OMS}{cmsy}{b}{n}

\DeclareSymbolFont{operators}   {OT1}{cmr} {m}{n}
\DeclareSymbolFont{letters}     {OML}{cmm} {m}{it}
\DeclareSymbolFont{symbols}     {OMS}{cmsy}{m}{n}
\DeclareSymbolFontAlphabet{\mathrm}    {operators}
\DeclareSymbolFontAlphabet{\mathnormal}{letters}
\DeclareSymbolFontAlphabet{\mathcal}   {symbols}
\DeclareMathAlphabet      {\mathbf}{OT1}{cmr}{bx}{n}
\DeclareMathAlphabet      {\mathsf}{OT1}{cmss}{m}{n}
\DeclareMathAlphabet      {\mathit}{OT1}{cmr}{m}{it}
\DeclareMathAlphabet      {\mathtt}{OT1}{cmtt}{m}{n}

\DeclareMathAlphabet      {\mathjc}{OT1}{dutchcal}{bx}{n}

\newcommand{\bB}[1]{\boldsymbol{#1}}
\newcommand{\poln}{p}
\newcommand{\bpoln}{\boldsymbol{p}}
\newcommand{\spacestep}{h}
\newcommand{\nsp}{n_{\bB{c}}}
\newcommand{\ssp}{n_{\bB{s}}}
\newcommand{\nrr}{n_{r}}

\newcommand{\vhs}{v_{\rm hs}}

\usepackage[labelfont=bf,margin=1cm,font=rm]{caption}

\usepackage{enumitem}

\begin{document}

\title{Invariant-region-preserving high-order schemes for a model of reactive sedimentation}

\author{
	{\sc Juan Barajas-Calonge}\thanks{GIMNAP-Departamento de Matem\'{a}tica, Universidad del B\'{\i}o-B\'{\i}o, Chile and Departamento de Ciencias Naturales y Tecnología, Universidad de Aysén, Chile, email: {\tt juan.barajas2001@alumnos.ubiobio.cl}.}
	\quad
	{\sc Julio Careaga$^{*}$}\thanks{Bernoulli Institute, University of Groningen, The Netherlands, email: {\tt j.c.careaga.solis@rug.nl}.\protect\footnotemark[1]} 
	\quad
    {\sc Luis-Miguel Villada}\thanks{GIMNAP-Departamento de Matem\'atica, Universidad del B\'io-B\'io, Chile and   CI$^{\mathrm{2}}$MA,
    	Universidad de Concepci\'{o}n,  Casilla 160-C, Concepci\'{o}n, Chile, email: {\tt lvillada@ubiobio.cl}.}
}
\date{ }

\renewcommand{\thefootnote}{\fnsymbol{footnote}}
\setcounter{footnote}{1}

\footnotetext[1]{Corresponding author}

\maketitle
\begin{abstract}

\noindent
Reactive sedimentation of mixtures composed by biological and inert solid matter including liquid substrates is modelled by a system of convection-diffusion-reaction equations. In this process, the small solid particles dispersed in the viscous fluid settle due to gravity force while simultaneous chemical reactions take place between solid and liquid components (substrates). Among the applications of reactive sedimentation processes, the principal use is in the simulation and control of secondary settling tanks (SSTs) in water resource recovery facilities (WRRFs).
The spatially 1D governing equations include a nonlinear and non-homogeneous strongly degenerate parabolic equation for the total concentration of solids, and transport equations including reaction terms for the solid components and substrates.
To accurately approximate the model equations, a high-order finite volume scheme is developed making use of polynomial reconstructions.
Two reconstruction methods are used, a second-order monotonic upstream-centered scheme (MUSCL), and a third-order central weighted essentially non-oscillatory (CWENO3) method. In addition, for the time discretization, we employ strong stability preserving Runge-Kutta (SSPRK) schemes of second- and third-order.
The CWENO3 spatial discretization is complemented with limiters, and in the case of zero  diffusion, the designed high-order schemes (MUSCL and CWENO3) are shown to preserve an invariant-region property.
This property is particularly relevant as it ensures that physically relevant numerical solutions are obtained at each time iteration. In addition, high-order approximations for the diffusion terms are proposed to approximate the complete model equations.
Simulations of denitrification in SSTs  under a continuously operated regime in WRRFs demonstrate the performance of the model and its discretization. Finally, convergence tests experimentally show the order of accuracy of the developed scheme and the invariant-region preservation.

\bigskip
{\small
\noindent{\bf Keywords}: systems of conservation laws, reactive sedimentation, invariant region preservation, finite volume methods, polynomial reconstructions\\[1ex]
{\bf Mathematics Subject Classification}: 35L65, 65L06, 65M08, 76T20
}
\end{abstract}

\section{Introduction}

\subsection{Scope}

Reactive sedimentation describes the process in which a mixture of solid particles and a liquid phase, including dissolved substrates, begin to settle due to  gravity force, while reactions between the solid and liquid components take place. Motivated by applications in water resource recovery facilities (WRRFs), reactive sedimentation has been shown to be particularly relevant in the simulation and control of secondary settling tanks (SSTs) where biological and inert matter interact with substrates dissolved in water \cite{Cruz2012,FloresAlsina2012,Henze2000}. The suspension within the SST, also called activated sludge, comprises a large amount of heterotrophic bacteria, typically considered as solid particles in mathematical models, which consume dissolved substrates and produce various compounds during and after metabolism. Among the biokinetic reactions between bacteria and substrates is the denitrification process, in which bacteria use nitrate ($\rm NO_3$) or nitrite ($\rm NO_2$) to produce nitrogen ($\rm N_2$).
In an SST, the aim is to recover clean water from the overflow (effluent) located at the top of the tank, and remove the concentrated sediment from the underflow at the bottom, see Figure~\ref{fig:fig1}. Then, simulations of reactive sedimentation in SSTs have to ensure a balance between the bulk flows (feed, effluent and underflow), while all concentrations remain positive, and the total concentration of solids does not exceed a maximum packing threshold. This motivates the use of the so-called invariant-region-preserving schemes \cite{barajas2025invariant,SDIMA_MOL}, which ensure that the approximated quantities remain physically relevant.
\begin{figure}[t]
\centering
 \includegraphics[scale=0.7]{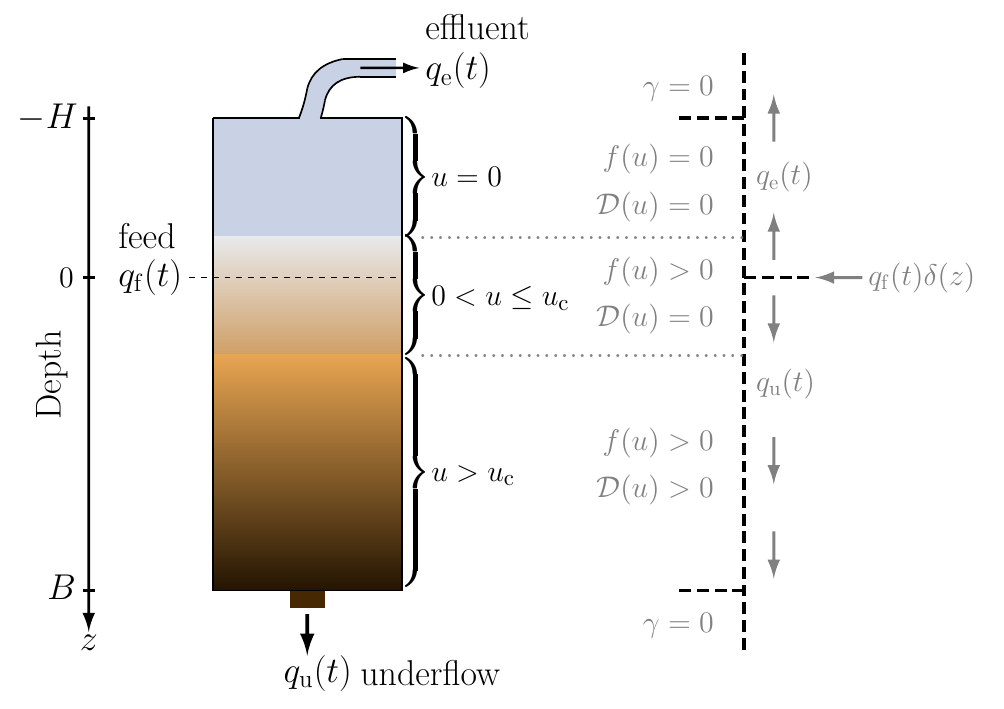}
 \caption{Schematic of a secondary settling tank (SST). The depth $z$ corresponds to the spatial coordinate, ranging from $z=-H$ to $z=B$ for the inner tank, and $z<-H$ and $z\geq B$ for the pipes. The colours in the SST represent a profile for the total solids concentration $u$. The diagram to the right of the SST shows the flux $f$, compression function $\mathcal{D}$ and bulk velocities $q_{\rm u}$ (underflow), $q_{\rm e}$ (effluent) and $q_{\rm f}$ (feed) for the particular concentration profile $u$ along the $z$-coordinate.\label{fig:fig1}}
\end{figure}
Further applications of reactive sedimentation in the wastewater treatment field can be found in the so-called sequencing batch reactors (SBRs), in which a series of settling tanks are operated in a continuous process of feed and extraction of solid and liquid matter \cite{SDAMM_SBR1,SDAMM_SBR2}.

The reactive sedimentation model in \cite{SDIMA_MOL}, which finds its basis on the underlying interaction between physical and biochemical processes, takes into account the highly nonlinear evolution of sedimentation and the interconnected reactions between solid and liquid components. In turn, the spatially 1D convection-diffusion-reaction modelling equations \cite{SDIMA_MOL}, in terms of the depth coordinate $z\in\mathbb{R}$ and time $t>0$, can be written as follows:
\begin{subequations} \label{syst:introductory:system}
\begin{align}
 & \dfrac{\partial\bB{c}}{\partial t}  + \dfrac{\partial}{\partial z} \big(\mathcal{\bB{V}}_{\bB{c}}(u,z,t)\bB{c}\big)  = \dfrac{\partial}{\partial z} \Big(\gamma(z)\dfrac{\partial}{\partial z}\mathcal{B}(u)\bB{c}\Big) +  \bB{b}_{\bB{c}}(\bB{c},\bB{s},z,t),\label{eq:introductory:c}\\
& \dfrac{\partial\bB{s}}{\partial t}  + \dfrac{\partial}{\partial z} \big(\mathcal{\bB{V}}_{\bB{s}}(u,z,t)\bB{s}\big) =
-\dfrac{\partial}{\partial z} \Big(\dfrac{ \gamma(z) \partial_z \mathcal{B}(u)u}{\rho - u}\bB{s}\Big) +\bB{b}_{\bB{s}}(\bB{c},\bB{s},z,t),\label{eq:introductory:s}\\
& \bB{c} = \big(c^{(1)},\dots, c^{(\nsp)}\big)^{\rm T},\quad \bB{s} = \big(s^{(1)},\dots, s^{(\ssp)}\big)^{\rm T},\quad u= u(z,t) = \sum_{i=1}^{n_{\bB{c}}} c^{(i)}(z,t), \label{eq:introductory:u}
\end{align}
\end{subequations}
where the unknowns are the vector of solid components $\bB{c} = \bB{c}(z,t)\in \mathbb{R}^{\nsp}$, vector of substrate components $\bB{s}=\bB{s}(z,t)\in\mathbb{R}^{\ssp}$, with $\nsp,\ssp\in\mathbb{N}$, and the total concentration of solids $u = u(z,t)$ which is a scalar quantity. Scalar functions $\mathcal{\bB{V}}_{\bB{c}}$ and $\mathcal{\bB{V}}_{\bB{s}}$ are related to convective velocities for the solids and liquid phases, respectively, while $\mathcal{B}$ accounts for sediment compressibility. Functions $\bB{b}_{\bB{c}}\in \mathbb{R}^{\nsp}$ and $\bB{b}_{\bB{s}}\in \mathbb{R}^{\ssp}$ group both reaction and feed-related terms (including a point source), $\gamma$ is a characteristic function, which equals one inside the vessel and zero otherwise, and $\rho$ is the density of the solid phase. Suitable initial conditions $\bB{c}(z,0) = \bB{c}_0(z)$, $\bB{s}(z,0) = \bB{s}_0(z)$ and $u(z,0) = u_0(z)$ satisfying \eqref{eq:introductory:u} for $z\in \mathbb{R}$ are supplemented to \eqref{syst:introductory:system}, while no boundary conditions need to be imposed.
All model ingredients, including the specific definition of functions, parameters, and constants of the system of partial differential equations (PDEs) \eqref{syst:introductory:system}, are described in detail in Section \ref{sec:model:equations}.

Although the convective terms in Equations \eqref{eq:introductory:c} and \eqref{eq:introductory:s} are linear with respect to the respective vector of unknowns, the total solids concentration $u$ defined by \eqref{eq:introductory:u} is modelled by a non-linear convection-diffusion-reaction equation which is the result of the addition of each component of \eqref{eq:introductory:c}.
Furthermore, this equation is parabolic strongly degenerate, which becomes hyperbolic for $u$ below a critical value, leading to solutions which may exhibit discontinuities, posing an additional complication to the numerical approximation.
Furthermore, the low regularity related to discontinuous solutions also affects the order of accuracy of a numerical scheme. Therefore, special techniques need to be implemented to achieve high-order such as the so-called polynomial reconstructions \cite{Liu1994,Jiang1996}. Polynomial reconstructions like the monotonic upstream-centered scheme (MUSCL) \cite{kolgan1972application,van1979towards} and weighted essentially non-oscillatory (WENO) reconstructions \cite{Levy2000} allow to maintain a high-order accuracy, while keeping a good approximation of discontinuities.

The main aim of this work is to develop a high-order numerical scheme to approximate \eqref{syst:introductory:system} making use of polynomial reconstructions for $\bB{c}$, $\bB{s}$ and $u$, and that at the same time satisfies an invariant-region-preserving property for the case $\mathcal{B} = 0$.
For this, we will focus on the second-order MUSCL and a third-order WENO reconstruction, which combines linear and quadratic polynomials. The reconstructed polynomials of each solution component will be specially designed such that the invariant-region property is achieved, including the use of a modified version of the well known Zhang and Shu limiters \cite{zhang2011positivity,zhang2010maximum}, which allow to preserve positivity and the maximum principle.
Furthermore, suitable strong stability preserving Runge-Kutta (SSPRK) methods will be employed for the time approximations.

\subsection{Related works}

Models of continuous sedimentation of monodispersed particles in the non-reactive case and in one spatial dimension can be found in \cite{Chancelier1994, Diehl1996, DeClercq2008, Burger2005}. For a review on models of non-reactive sedimentation for SSTs, including applications in water science, we refer to \cite{Li2014}. A common constitutive assumption in these models is the settling velocity which nonlinearly depends on the volume fraction of solids particles in agreement with Kynch's theory \cite{Kynch1952}. Furthermore, compression effects where sediment particles get into contact with each other and form a network for concentrations above a critical value $u_{\rm c}>0$ are incorporated as a degenerate diffusion term into the model equation \cite{Burger2005}. Another important complication, as described in \cite{Diehl1996}, is the discontinuous flux function, which arises as a result of the continuously operated regime.

The biokinetic reactions that take place in SSTs are typically described by the comprehensive framework of activated sludge models (ASM) \cite{Henze1987,Henze2000}, which characterize the reactions between the solids and substrates by means of systems of ordinary differential equations (ODEs). The first ODE-based activated sludge model is the ASM1 \cite{Henze1987} and further standardised models are ASM2, ASM2d, and ASM3 \cite{Henze2000}, that differ according to the level of biological complexity they represent \cite{FloresAlsina2012}.
Reduced models focusing on the biokinetic reactions related to nitrification and denitrification processes can be found in \cite{Cruz2012,Jeppsson1993,Ni2008,Julien1998,vanHaandel1981}. For these reduced models, the key variables are the concentrations of diluted nitrate ($\rm NO_3$), nitrite ($\rm NO_2$) and produced nitrogen ($\rm N_2$). One of the first PDE based models for reactive (batch) sedimentation was developed in \cite{SDcace_reactive}, where the governing equations correspond to a convection-diffusion-reaction system of equations written for a reduced biokinetic model of denitrification taking place in the SSTs. This model was later extended to include an arbitrary number of solids and substrate components for the case of continuous sedimentation \cite{SDIMA_MOL,SDm2an_reactive}, which corresponds to the model studied in this work. Applications to sequencing batch reactors can be found in the series of works \cite{SDAMM_SBR1,SDAMM_SBR2,Burger2023}, where the main difficulty is to correctly approximate a moving boundary. Although the numerical methods developed to simulate these reactive sedimentation models are shown to satisfy an invariant-region preserving property, they are only first-order accurate.
Furthermore, the presence of shocks and discontinuities in the solutions require a special treatment in order to accomplish high-order approximations.

For high-order numerical approximations, some of the most widely used techniques for nonlinear conservation laws are the so-called polynomial reconstruction methods \cite{LeVeque1990Green}, which compute the numerical flux by locally using polynomial representations for the left and right interface states from neighbouring solution values. The weighted essentially non-oscillatory (WENO) reconstructions were introduced in \cite{Liu1994}, which are methods designed to avoid spurious oscillations near discontinuities by adaptively combining several candidate reconstructions. Several extensions to WENO schemes have been developed in the literature \cite{Cravero2017, Li2021, zhang2010maximum,zhang2011maximum,Zhu2018}, see also the review provided in \cite{Shu2009}. In \cite{Cravero2017}, a central WENO (CWENO) scheme is introduced, whose main characteristic is that the polynomial reconstructions may include polynomials of different degrees. Multi-resolution WENO schemes are developed in \cite{Li2021, Zhu2018} designed such that high-order accuracy is achieved through adaptive reconstruction across multiple spatial scales.

Invariant-region properties, which may combine positivity and maximum-principle preservation for more than one variable in a PDE system, plays a crucial role to ensure that numerical solutions are physically relevant in many contexts. For instance, numerical schemes that preserve the positivity of density and pressure have been developed for the Euler equations of compressible gas dynamics \cite{zhang2010positivity,zhang2011positivity}, while positivity-preserving methods have also been proposed for the shallow water equations \cite{xing2013positivity,xing2010positivity} and for polydisperse sedimentation models \cite{barajas2025invariant,barajas2026second}. Finally, a comprehensive overview of property-preserving schemes for conservation laws, together with an extensive list of references, can be found in the monograph by Kuzmin and Hajduk \cite{kuzmin2024property}.

\subsection{Outline of the paper}

The paper is organised as follows. In Section~\ref{sec:model:equations}, we introduce the model of reactive sedimentation, including the governing equations, functions and parameters. The spatial approximation of the model equations is introduced in Section~\ref{sec:numscheme}. The polynomial reconstructions are presented in Section~\ref{sec:polynomial:reconstructions}, where in particular the MUSCL scheme is discussed in Section~\ref{sec:MUSCL} and the CWENO scheme is treated in Section~\ref{sec:CWENO}. Additional limiters for the CWENO reconstructions are included in Section~\ref{sec:limiters}, and the time approximations for the second- and third-order schemes are introduced in Section~\ref{sec:time:approximation}. The theoretical results related to the invariant-region preserving of the high-order scheme for the case without compression effects are included in Section~\ref{sec:invariant-region}. In Section~\ref{sec:numerical:examples}, we present numerical examples to show  the theoretical order of accuracy of the scheme, the invariant-region property and reliability of the developed high-order scheme.
Finally, concluding remarks are included in Section \ref{sec:conclusions}.

\section{Model equations} \label{sec:model:equations}

To model the one-dimensional reactive sedimentation process, we first consider that the sedimentation takes place in a tank whose depth $z$ varies from $z=-H$ (height) to $z=B$ (bottom), with $H,B\geq 0$. Furthermore, we assume that the process is primarily influenced by gravity, so horizontal variations are negligible. For the mixture, we consider $n_{\bB{c}}\in\mathbb{N}$ solid components and $n_{\bB{s}}\in \mathbb{N}$ substrates, defined in vector notation as $\bB{c} := \bigl(c^{(1)},c^{(2)},\dots,c^{(n_{\bB{c}})}\bigr)^{\texttt{t}}$ and $\bB{s} := \bigl(s^{(1)},s^{(2)},\dots,s^{(n_{\bB{s}})}\bigr)^{\texttt{t}}$, respectively, both considered vectors of unknowns.
The vector of solid concentrations $\bB{c}\in\mathbb{R}^{n_{\bB{c}}}$ typically contains concentrations of heterotrophic organisms and non-degradable matter, while the vector of substrates $\bB{s}\in\mathbb{R}^{n_{\bB{s}}}$ contains concentrations of substrates dissolved in water. The total solids' concentration $u$, defined as the sum of components of $\bB{c}$, is also considered a quantity of interest, therefore there are $n_{\bB{c}}+n_{\bB{s}}+1$ unknowns in total, all of them functions depending on the spatial coordinate $z\in \mathbb{R}$ and time $t>0$. The system of PDEs governing this process is given by \cite{SDIMA_MOL}
\begin{subequations} \label{syst:reac}
\begin{alignat}{2}
&\dfrac{\partial c^{(i)}}{\partial t} + \dfrac{\partial}{\partial z}\Big(
\big(q(z,t) + \gamma(z)\bigl(v_{\rm hs}(u) - \partial_z \mathcal{B}(u)\bigr)\big) c^{(i)} \Big)
  = b^{(i)}_{\bB{c}}(\boldsymbol{c},\bB{s},z,t)\,, \quad & \text{for }i=1,\dots,n_{\bB{c}}, \label{eq:reac:a}
\\[1ex]
 &\dfrac{\partial s^{(j)}}{\partial t} + \dfrac{\partial}{\partial z}\left( \left(q(z,t) - \dfrac{\gamma(z)(v_{\rm hs}(u) - \partial_z\mathcal{B}(u))u}{\rho-u}\right)  s^{(j)}\right)  =  b^{(j)}_{\bB{s}}(\boldsymbol{c},\bB{s},z,t)\,, \quad  &\text{for }j = 1,\dots,n_{\bB{s}}, \label{eq:reac:b}\\[1ex]
 & u(z,t)  = c^{(1)} (z,t) + \dots + c^{(n_{\boldsymbol{c}})} (z,t)\,, \quad &\label{eq:reac:c}
\end{alignat}
\end{subequations}
where $v_{\rm hs}:=v_{\rm hs}(u)$ is the hindered settling velocity, defined such that $\vhs\in C^1([0,u_{\max}])$, with $\vhs'(u)\leq 0$ and $\vhs(u_{\max}) = 0\leq \vhs(u)\leq v_0=\vhs(0)$, for $0\leq u\leq u_{\max}$, and there exists a positive constant $L_{\rm v}$ for which
\begin{align} \label{eq:Lipschitz:vhs}
 |\vhs(u)-\vhs(\widetilde{u})|\leq L_{\rm v}|u-\widetilde{u}|,\quad \text{ for all}\quad 0\leq u,\widetilde{u}\leq u_{\max},
\end{align}
which corresponds to the Lipschitz continuity condition of $\vhs$. Furthermore, $q:=q(z,t)$ is the volume-average bulk velocity defined as
\begin{align*}
 q(z,t) = \begin{cases}
           -q_{\rm e}(t) &  \text{ if } z\leq 0,\\
           q_{\rm u}(t) &  \text{ if } z> 0,
          \end{cases}
\end{align*}
where $q_{\rm u} + q_{\rm e} = q_{\rm f}$ with $q_{\rm f}$, $q_{\rm e}$ and $q_{\rm u}$ being the feed, underflow and effluent bulk flows, time-dependent functions respectively. Function $\mathcal{B}:=\mathcal{B}(u)$ describes the sediment compressibility, which is zero for $u<u_{\rm c}$ where $u_{\rm c}\in \mathbb{R}^+$ is a critical concentration above which the particles come into contact, creating a network capable of bearing stress. The constant density of the solid phase is $\rho >0$, and the source terms $\bB{b}_{\bB{c}} = \big(b_{\bB{c}}^{(1)},\dots, b_{\bB{c}}^{(n_{\bB{c}})}\big)^{\tt t}\in\mathbb{R}^{n_{\bB{c}}}$ and $\bB{b}_{\bB{s}}= \big(b_{\bB{s}}^{(1)},\dots, b_{\bB{s}}^{(n_{\bB{s}})}\big)^{\tt t}\in \mathbb{R}^{n_{\bB{s}}}$ are vector functions that involve feed and reaction terms corresponding to equations \eqref{eq:reac:a} and \eqref{eq:reac:b}, respectively defined by
\begin{align*}
 \bB{b}_{\bB{c}}(\bB{c},\bB{s},z,t) &= \delta(z)q_{\rm f}(t)\bB{c}_{\rm f}(t) + \gamma(z)\bB{\sigma}_{\bB{c}}\bB{r}(\bB{c},\bB{s})\,,\\
 \bB{b}_{\bB{s}}(\bB{c},\bB{s},z,t) &= \delta(z)q_{\rm f}(t)\bB{s}_{\rm f}(t) + \gamma(z)\bB{\sigma}_{\bB{s}}\bB{r}(\bB{c},\bB{s})\,.
\end{align*}
Here $\delta$ is the Dirac distribution, $\bB{c}_{\rm f}$ and $\bB{s}_{\rm f}$ are the vectors of feed concentration of solids and liquid components, respectively, $ 0 \leq \bB{r}(\bB{c}, \bB{s})\in \mathbb{R}^{\nrr}$ is a vector of nonlinear functions modelling the reaction processes, with $\nrr$ a positive integer, $\bB{\sigma}_{\bB{c}}\in \mathbb{R}^{ n_{\bB{c}}\times \nrr}$ and $\bB{\sigma}_{\bB{s}}\in \mathbb{R}^{n_{\bB{s}}\times \nrr}$ are constant dimensionless stoichiometric matrices. We further define $b_u$ as the sum of the components of $\bB{b}_{\bB{c}}$, that is
\begin{align*}
  b_u(\bB{c},\bB{s},z,t) := \sum_{i = 1}^{n_{\bB{c}}} b^{(i)}_{\bB{c}}(\bB{c},\bB{s},z,t) = \delta(z)q_{\rm f}(t)u_{\rm f}(t) + \gamma(z)\bB{1}^{\tt t} \bB{\sigma}_{\bB{c}}\bB{r}(\bB{c},\bB{s})\,,
\end{align*}
where $\bB{1}$ is the vector of ones in $\mathbb{R}^{n_{\bB{c}}}$.
Moreover, we define the maximum concentration of solids $u_{\max}\in\mathbb{R}^+$ and assume that $u_{\max}<\rho$. We observe that adding up \eqref{eq:reac:a} and using \eqref{eq:reac:c} one obtains the following non-linear scalar equation for the total concentration $u$:
\begin{align}
 &\dfrac{\partial u}{\partial t} + \dfrac{\partial}{\partial z}\Big( \big(q(z,t) + \gamma(z)\bigl(\vhs(u) -  \partial_z \mathcal{B}(u)\bigr)\big) u \Big) = b_{u}(\bB{c},\bB{s},z,t)\,, \label{eq:EDP:u}
\end{align}
where the product $v_{\rm hs}(u)u = f(u)$ corresponds to the nonlinear batch flux function \cite{Kynch1952}, and the compression function for the total solids concentration is defined by a function $\mathcal{D}:=\mathcal{D}(u)$ such that $\partial_z\mathcal{D}(u) = (\partial_z\mathcal{B}(u))u$. In the non-reactive case \cite{Burger2005}, function $\mathcal{D}$ is defined depending on $v_{\rm hs}$ and the effective solids stress $\sigma_{\rm e}:=\sigma_{\rm e}(u)$, which is positive only when $u>u_{\rm c}$ and $\sigma_{\rm e}(u)=0$ for $u\leq u_{\rm c}$. These compression-related functions are then defined by the integrals
\begin{align*}
 \mathcal{D}(u) :=  \displaystyle\int_{u_{\rm c}}^{u} d(s)\rd s\qquad \text{and}\qquad   \mathcal{B} (u) = \displaystyle\int_{u_{\rm c}}^{u} \dfrac{d(s)}{s}\rd s\quad\mbox{with}\quad d(u) := \dfrac{\rho}{ g (\rho-\rho_{\rm L})} v_{\rm hs}(u)\sigma_{\rm e}'(u)\,,
\end{align*}
where $0<\rho_{\rm L}<\rho$ is the density of the liquid phase. Note that $\vhs$ is typically assumed to be zero for values above $u_{\max}$. Equation~\eqref{eq:EDP:u} is then strongly degenerate parabolic due to the fact that $\mathcal{D}$ vanishes in the $u$-interval $[0,u_c]$ and it degenerates into a first-order hyperbolic equation. The final system of equations for the unknowns $u$, $\bB{c}$ and $\bB{s}$  is given by
\begin{subequations}\label{syst:final}
\begin{align}
&\dfrac{\partial u}{\partial t} + \dfrac{\partial}{\partial z}\Big( \big(q(z,t) + \gamma(z)\bigl(\vhs(u) -  \partial_z \mathcal{B}(u)\bigr)u\big) \Big) = b_{u}(\bB{c},\bB{s},z,t)\,, \label{eq:model:u}\\
  &\dfrac{\partial \bB{c}}{\partial t} + \dfrac{\partial}{\partial z}\Big(
\big(q(z,t) + \gamma(z)\bigl(\vhs(u) -  \partial_z \mathcal{B}(u)\bigr)\big) \bB{c}\Big)
  = \bB{b}_{\bB{c}}(\bB{c},\bB{s},z,t)\,, \label{eq:model:c}\\
&\dfrac{\partial \bB{s}}{\partial t} + \dfrac{\partial}{\partial z}\left( \left(q(z,t) - \gamma(z)\dfrac{(v_{\rm hs}(u) - \partial_z\mathcal{B}(u))u}{\rho-u}\right)  \bB{s}\right)  = \bB{b}_{\bB{s}}(\bB{c},\bB{s},z,t)\,. \label{eq:model:s}
\end{align}
\end{subequations}
Note that in the system above, condition~\eqref{eq:reac:c} has been used to obtain Equation~\eqref{eq:model:u}, hence \eqref{syst:reac}
implies \eqref{syst:final}. On the contrary, the opposite implication to go from \eqref{syst:final} to \eqref{syst:reac} can be understood in the following way. Adding up \eqref{eq:model:c} by components and subtracting \eqref{eq:model:u} we get:
\begin{align} \label{eq:difference:advection}
 \dfrac{\partial }{\partial t}\vartheta + \dfrac{\partial}{\partial z} \Big( \big(q(z,t) + \gamma(z)\bigl(\vhs(u) -  \partial_z \mathcal{B}(u)\bigr)\big) \vartheta \Big) = 0,
\end{align}
with $\vartheta := u - (c^{(1)} + \cdots + c^{(n_{\bB{c}})})$. The new variable $\vartheta$ satisfies then the linear advection equation \eqref{eq:difference:advection} with homogeneous initial and boundary conditions. Now, under the assumption that $u$ is a given function and that the unique solution to \eqref{syst:final} is the homogeneous solution $\vartheta = 0$, then condition \eqref{eq:reac:c} follows directly, and therefore in this case system \eqref{syst:final} would imply \eqref{syst:reac}. A proper proof requires to study the entropy solutions to \eqref{syst:reac} and regularity of $ q(z,t) + \gamma(z)\bigl(\vhs(u) -  \partial_z \mathcal{B}(u)\bigr)$, which is out of the scope of this work. In \cite{SDIMA_MOL}, it is shown that choosing an appropriate first-order numerical scheme for \eqref{syst:final}, condition \eqref{eq:reac:c} is fulfilled at every time iteration. For the high-order numerical scheme described in Section~\ref{sec:numscheme}, we are going to also show that condition \eqref{eq:reac:c} is fulfilled for the case when the compression effects are not considered.

\noindent To simplify the notation used in the spatial discretization of system \eqref{syst:final} in Section~\ref{sec:numscheme}, we introduce the convective velocity, diffusion-related fluxes and total flux of \eqref{eq:model:u}:
\begin{alignat}{2}
\mathcal{V} &:=\mathcal{V}(u,z,t) = q(z,t) + \gamma(z)\vhs(u)\,, & \label{eq:def:V} \\
\mathcal{J} &:=\mathcal{J}(u,\partial_zu,z) = \gamma(z)\partial_z\mathcal{D}(u)\,, \label{eq:def:J}\\
\mathcal{E} &:=\mathcal{E}(u,\partial_zu,z) = \gamma(z)\partial_z\mathcal{B}(u)\,, \label{eq:def:E}\\
\mathcal{F} &:=\mathcal{F}(u,z,t) = \big(\mathcal{V}(u,z,t) - \mathcal{E}(u,\partial_z u, z)\big)u \,. & \label{eq:def:F}
\end{alignat}
In addition, the vector fluxes corresponding to \eqref{eq:model:c} and \eqref{eq:model:s}, are respectively defined by
\begin{align*}
 \mathbcal{G}^{\bB{c}}(\bB{c},u,\partial_zu,z,t) &:= \big(\mathcal{V}(u,z,t) - \mathcal{E}(u,\partial_zu,z)\big)\bB{c}\,,\\
 \mathbcal{G}^{\bB{s}}(\bB{s},u,\partial_zu,z,t) &:= \bigg(\rho q(z,t) - \mathcal{V}(u,z,t) u + \mathcal{E}(u,\partial_zu,z) u \bigg)\dfrac{\bB{s}}{\rho - u}\,.
\end{align*}
The set of physically relevant solutions (invariant-region set) is defined as:
\begin{align}\label{eq:def:invariant:region:set:W}
 \mathbcal{W} := \Big\{(u,\bB{c},\bB{s})\in \mathbb{R}\times \mathbb{R}^{\nsp}\times \mathbb{R}^{\ssp}:\quad 0\leq u\leq u_{\max},\quad \bB{1}^{\tt t}\bB{c} = u,\quad \bB{c}\geq \bB{0},\quad \bB{s}\geq\bB{0}\Big\}.
\end{align}
Finally, we summarise all the hypotheses related to the reaction terms required for our model in the following assumption.

\begin{assumption}\label{asumption:reactionterms}
	Let $\bB{\xi}\in\{\bB{c},\bB{s}\}$ and consider the corresponding number of components $n_{\bB{\xi}}$ ($n_{\bB{c}}$ or $n_{\bB{s}}$ depending on $\bB{\xi}$). Given $k=1,\dots, n_{\bB{\xi}}$, let $\mathcal{J}_{\bB{\xi},k}^-$ and $\mathcal{J}_{\bB{\xi},k}^+$ be respectively the set of negative and non-negative  coefficients related to the $k$ row of the stoichiometric matrix $\sigma_{\bB{\xi}}$:
\begin{align*}
\mathcal{J}_{\bB{\xi},k}^-:=\Big\{j\in\mathbb{N}:\,\sigma_{\bB{\xi}}^{(k,j)}<0\Big\},\qquad
\mathcal{J}_{\bB{\xi},k}^+:=\Big\{j\in\mathbb{N}:\,\sigma_{\bB{\xi}}^{(k,j)}\geq 0\Big\}.
\end{align*}
The stoichiometric matrices $\bB{\sigma}_{\bB{c}}$ and $\bB{\sigma}_{\bB{s}}$, and vector of reaction rates $\bB{r}$ satisfy the following hypotheses
	\begin{enumerate} \label{asumption:reactionterms}
		\item The vector of ones belongs to ${\rm ker}(\bB{\sigma}_{\bB{c}}^{\tt t}, \bB{\sigma}_{\bB{s}}^{\tt t})\,$.
		\item For each $j=1,\dots, \nrr$, there holds $r^{(j)}\in \mathrm{C}\big(\bar{\mathbb{R}}_+^{\nsp},\bar{\mathbb{R}}_+^{\ssp}\big)$, with $r^{(j)}$ locally Lipschitz continuous.
		\item Given $k=1,\dots,n_{\bB{\xi}}$, for every $j\in \mathcal{J}_{\bB{\xi},k}^-$ there exists a bounded function $\bar{r}_{\bB{\xi}}^{(j)}\in  \mathrm{C}\big(\bar{\mathbb{R}}_+^{\nsp},\bar{\mathbb{R}}_+^{\ssp}\big)$ with $\bar{r}_{\bB{\xi}}^{(j)}(\bB{c},\bB{s})>0$ if $\bB{c}>\bB{0}$ and $\bB{s} > \bB{0}$, and $\bar{r}_{\bB{\xi}}^{(j)}(\bB{c},\bB{s})=0$ if $\bB{c} = \bB{0}$ and $\bB{s}=\bB{0}$, such that
		\begin{align*}
			r^{(j)}(\bB{c},\bB{s}) =  \bar{r}_{\bB{\xi}}^{(j)}(\bB{c},\bB{s})\xi^{(k)}.
		\end{align*}
		There exists an upper bound $\boundr>0$ such that $\bar{r}^{(j)}_{\bB{\xi}}(\bB{c},\bB{s}) \leq \boundr$ for all $j\in \mathcal{J}_{\bB{\xi},k}^-$ and all $(u,\bB{c},\bB{s})\in \mathbcal{W}$.
		\item There exists an $\boundr_{\max} > 0$ such that each component of $|\bB{\sigma}_{\bB{\xi}}\bB{r}(\bB{c},\bB{s})|$ is bounded by $\boundr_{\max} $ for all $(u,\bB{c},\bB{s}) \in \mathbcal{W}$.
		\item There exists an $\varepsilon > 0$ such that
		\begin{align*}
			\bB{\sigma}_{\bB{c}}\bB{r}(\bB{c},\bB{s}) = \bB{0}\,\quad \text{for all}\quad u = \bB{1}^{\tt t}\bB{c} =  c^{(1)}+\cdots +  c^{(n_{\bB{c}})}\, \geq\, u_{\max} - \varepsilon.
		\end{align*}
	\end{enumerate}
\end{assumption}

\begin{remark}
The first assumption is made in order to guarantee conservation of mass, while the second one is to secure that in absence of spatial variations, the resulting ODE system is well-posed. The third assumption ensures that the reaction terms tend to zero when the vectors of concentration are zero and gives a bound to control the reaction terms having negative stoichiometric indexes. Assumptions 4 and 5 are made to properly bound the reaction terms when showing the invariant-region property in Section~\ref{sec:invariant-region}.
\end{remark}

\section{Spatial approximation}\label{sec:numscheme}

We begin by discretizing the spatial domain within the tank $\Omega:=[-H,B\,]$ into $N\in\mathbb{N}$ cells of length $\spacestep = (H+B)/N$ defined by the intervals $ [z_{j-1/2}, z_{j+1/2}]=:I_j$, where the cell-limits are $z_{j+1/2} = -H+ j\spacestep$, with $j=0,\dots, N$. The cell centre of each $j$-cell is denoted by $z_j$. The unknowns at each $j$-cell are approximated by cell sliding averages of the form
\begin{align*}
 u_j(t) = \dfrac{1}{\spacestep}\int_{I_j} u(z,t){\rm d}z\,,\qquad j = 0,\dots,N\,,
\end{align*}
where $u_j(t)\approx u(z,t)$ for all $z\in I_j$ and time $t>0$. Similarly, vectors $\bB{c}_j(t)$ and $\bB{s}_{j}(t)$ are defined component-wise as cell averages approximating $\bB{c}(z,t)$ and $\bB{s}(z,t)$ for all $z\in I_j$ and time $t>0$, respectively. Then, system \eqref{syst:final} is approximated by a finite volume numerical scheme based on piece-wise constant solutions. The discretized indicator function is given by
\begin{align*}
 \gamma_{j+1/2} = \gamma(z_{j+1/2}) : = \begin{cases}
                     1 & \mbox{if }z_{j+1/2} \in (-H,B)\,,\\[1ex]
                     0 & \mbox{otherwise}\,, \\[1ex]
                    \end{cases}
\end{align*}
and the discretized Kronecker delta is
\begin{align*}
\delta_{j,j_{\rm f}}:=\begin{cases}
                       1 & \mbox{if }j=j_{\rm f}\,,\\[1ex]
                         0 & \mbox{otherwise}\,,
                      \end{cases}
\end{align*}
where $j_{\rm f} = \lceil H/\spacestep \rceil$. Furthermore, we approximate the known spatially discontinuous and time-dependent function $q$ as
\begin{align}\label{eq:def:qjhalf}
q_{j+1/2}(t) =
\begin{cases}
-q_{\rm e}(t) & \quad j\leq j_{\rm f},\\
q_{\rm u}(t) & \quad j> j_{\rm f}.
\end{cases}
\end{align}
To approximate the convective fluxes in the upcoming sections, we introduce the upwind operator
\begin{alignat}{2}\label{eq:def:upwind}
&{\rm Upw}(a;b,c) := \max\{a, 0\}b + \min\{a, 0\}c\,&&\qquad \text{for } a,b,c\in \mathbb{R}\,,
\end{alignat}
where $a$ in this case plays the role of the convective velocity, while $b$ and $c$ would correspond to the left and right states in a standard discretization, respectively.
Note that operator ${\rm Upw}$ corresponds to a two-point Lipschitz monotone flux, which is nondecreasing in the second argument and nonincreasing in the third argument. We will also use the following notation
\begin{align*}
 a^+ = \max\{a,0\},\qquad a^- = \min\{a,0\},\qquad \text{for }a\in \mathbb{R}.
\end{align*}
Note that with this notation, the upwind operator can be written as
\begin{align*}
 {\rm Upw}(a;b,c) = a^{+} b + a^{-}c.
\end{align*}

\subsection{Total concentration equation}\label{sec:total:concentration:equations}
We begin by describing the finite volume approximation of Equation~\eqref{eq:model:u} which combined with a monotonic upstream-centered scheme (MUSCL) or  weighted essentially non-oscillatory (WENO) reconstructions \cite{Shu1998,Levy2000} gives rise to a high-order scheme for the total concentration $u$. To do so, we integrate Equation~\eqref{eq:model:u} over each cell interval $I_j$ and apply integration by parts to obtain:
\begin{align}\label{fvintegral}
  \int_{I_j}\partial_t u\,{\rm d}z + \widehat{\mathcal{F}}_{j+1/2} - \widehat{\mathcal{F}}_{j-1/2}
  = \int_{I_j} b_u\big(\boldsymbol{c},\bB{s},z,t\big){\rm d}z\,,
\end{align}
for all $j = 1,\dots,N$, where $\widehat{\mathcal{F}}_{j+1/2}$ corresponds to the discretization of the total flux \eqref{eq:def:F} at the cell boundary $z= z_{j+1/2}$ and time $t>0$, respectively, whose precise definitions will be described next. To define $\smash{\widehat{\mathcal{F}}_{j+1/2}}$, we follow the approach from \cite{SDIMA_MOL} and make use of an upwind scheme with discrete velocity $\smash{\widehat{\mathcal{V}}_{j+1/2}-\mathcal{E}_{j+1/2}}$ corresponding to the approximation of $\mathcal{V}$ in \eqref{eq:def:V} and $\mathcal{E}$ in \eqref{eq:def:E}, respectively, which we then combine with polynomial reconstructions. To achieve the desired higher-order approximation, we consider a polynomial reconstruction of $u$ on each interval $I_j$, given by polynomials $\poln^u_j:=\poln^u_j(z)$ of degree $\ell-1$, with $\ell$ a positive integer. Then, for each cell interface $j+1/2$, the left and right polynomial reconstructions are respectively denoted by $\widehat{u}_{j+1/2}^{\rm L}$ and $\widehat{u}_{j+1/2}^{\rm R}$, and the numerical flux is defined by
\begin{align} \label{eq:numfluxF}
\widehat{\mathcal{F}}_{j+1/2}&
:= {\rm Upw}\Big(\widehat{\mathcal{V}}_{j+1/2} - \mathcal{E}_{j+1/2};\,\widehat{u}_{j+1/2}^{\rm L},\,\widehat{u}_{j+1/2}^{\rm R}\Big)\,,
\end{align}
where the discrete velocity $\widehat{\mathcal{V}}_{j+1/2}$ is given by
\begin{align*}
 \widehat{\mathcal{V}}_{j+1/2} := q_{j+1/2} + \gamma_{j+1/2}\vhs(\widehat{u}_{j+1/2}^{\rm R})\,.
\end{align*}
The left and right reconstructed values of $u$ at $z_{j+1/2}$ are computed using the polynomial reconstructions at each side of the interface by $\widehat{u}_{j+1/2}^{\rm L} := \poln^u_j(z_{j+1/2})$ and $\widehat{u}_{j+1/2}^{\rm R} := \poln^u_{j+1}(z_{j+1/2})$, respectively. One of the main properties that $p_j^u$ satisfies, which is going to be widely employed in the next sections, is the cell-average preservation:
\begin{align}
 u_j = \dfrac{1}{h}\int_{I_j} p_j^u (z)\,{\rm d}z,\qquad \text{for }j=1,\dots,N\,. \label{eq:average:property:reconsturctions}
\end{align}
A full description of how to determine the polynomials $\poln_j^u$ is going to be provided in Section~\ref{sec:polynomial:reconstructions}.

To obtain a fully second- and third-order discretization, the diffusive term $\mathcal{E}$ must be approximated with at least the same order of accuracy as the convective flux, which is going to be of order two for the MUSCL scheme and of order three for the WENO scheme in Section~\ref{sec:polynomial:reconstructions}. Furthermore, as there exist two functions related to the compression effects, we propose two approaches to compute $\mathcal{E}_{j+1/2}$, one based on the standard compression function $\mathcal{D}$, called ``$\mathcal{D}$-method'', and a second option using directly function $\mathcal{B}$, called ``$\mathcal{B}$-method''. For the $\mathcal{D}$-method, we first discretize $\partial_z \mathcal{D}$ through the following second and fourth order related formulas
\begin{align}
\mathcal{J}_{j+1/2}^{\rm 2nd} &:= \gamma_{j+1/2}\,\dfrac{\mathcal{D}(u_{j+1}) - \mathcal{D}(u_{j})}{h}\,, \label{eq:J:second}\\
\mathcal{J}_{j+1/2}^{\rm 4th} &:= \gamma_{j+1/2}\,\frac{-\mathcal D(\tilde u_{j+2}) + 27\,\mathcal D(\tilde u_{j+1}) -27\,\mathcal D(\tilde u_{j})+\mathcal D(\tilde u_{j-1})}{24\,h}, \label{eq:J:fourth}
\end{align}
where we use the following fourth-order auxiliary reconstruction of $u$ at the cell centre $z_j$ from the cell averages:
\begin{equation}\label{eq:u:tilde:aux}
	\tilde u_j:=u_j - \dfrac{1}{24}\bigl( u_{j+1} - 2u_j + u_{j-1} \bigr)\,.
\end{equation}
Then, given $\mathcal{J}_{j+1/2} = \mathcal{J}_{j+1/2}^{\rm 2nd}$ for the second-order scheme or $\mathcal{J}_{j+1/2} = \mathcal{J}_{j+1/2}^{\rm 4th}$ for a fourth-order approximation of $\mathcal{J}$, we discretize $\mathcal{E}$ making use of the relation $ (\partial_z\mathcal{B}(u))u = \partial_z\mathcal{D}(u)$, which connects functions $\mathcal{D}$ and $\mathcal{B}$. Therefore, we can define $\mathcal{E}_{j+1/2}$ by the following quotient:
\begin{equation}\label{eq:e-disc}
	\mathcal E_{j+1/2}=\begin{cases}
		\dfrac{\mathcal J_{j+1/2}}{\bar{u}_{j+1/2}} & \text{ if }\bar{u}_{j+1/2}:= \tfrac{1}{2}\bigl(\widehat{u}_{j+1/2}^{\rm L}+\widehat{u}_{j+1/2}^{\rm R}\bigr)\geq  u_{c},\\
		0& \text{ otherwise },
		\end{cases}
\end{equation}
where we have also used the reconstructions $\smash{\widehat{u}_{j+1/2}^{\rm L}}$ and $\smash{\widehat{u}_{j+1/2}^{\rm R}}$.

For the $\mathcal{B}$-method, we directly discretize $\partial_z \mathcal{B}$ using the second- and fourth-order formulas introduced for $\partial_z\mathcal{D}$ in  \eqref{eq:J:second} and \eqref{eq:J:fourth}, respectively, such that we get the two following approximations for $\mathcal{E}$:
\begin{align}
\mathcal{E}_{j+1/2}^{\rm 2nd} &:= \gamma_{j+1/2}\,\dfrac{\mathcal{B}(u_{j+1}) - \mathcal{B}(u_{j})}{h}\,, \label{eq:E:second}\\
\mathcal{E}_{j+1/2}^{\rm 4th} &:= \gamma_{j+1/2}\,\frac{-\mathcal{B}(\tilde u_{j+2}) + 27\,\mathcal{B}(\tilde u_{j+1}) -27\,\mathcal{B}(\tilde u_{j})+\mathcal{B}(\tilde u_{j-1})}{24\,h}, \label{eq:E:fourth}
\end{align}
where $\tilde{u}_{j}$ is defined in \eqref{eq:u:tilde:aux}. Then, depending on the order of the approximation, we choose either the second-order $\mathcal{E}_{j+1/2} = \mathcal{E}_{j+1/2}^{\rm 2nd}$  or the fourth-order approach $\mathcal{E}_{j+1/2} = \mathcal{E}_{j+1/2}^{\rm 4th}$.

For the right-hand side of \eqref{fvintegral}, we use the definition of the Dirac distribution and a quadrature rule for the integral involving the reaction terms. In turn, we approximate the integral of the reaction terms making use of the polynomial reconstructions of $\bB{c}$ and $\bB{s}$ over the cell $I_j = [z_{j-1/2},z_{j+1/2}]$, respectively denoted by $\bB{p}_{j}^{\bB{c}}$ and $\bB{p}_{j}^{\bB{s}}$, and employing a $G$-point Legendre-Gauss-Lobatto quadrature rule with weights $w_1,w_2,\dots,w_G$ for the interval $[-\tfrac{1}{2},\tfrac{1}{2}]$ such that $w_{1} + \cdots + w_{G} =1$,
and with quadrature nodes given by
\begin{equation}\label{eq:gauss-legendre}
	S_j :=  \Big\{z_{j-1/2}= z_j^{1}, z_j^{2},\dots, z_j^{G-1}, z_j^{G} = z_{j+1/2} \Big\},\quad \text{ for all } j=0,1,\dots,N.
\end{equation}
Then, we have
\begin{align*}
 \int_{I_j} b_u\big(\bB{c},\bB{s},z,t\big){\rm d}z\,\approx\, \delta_{j,j_{\rm f}}q_{\rm f}(t)u_{\rm f}(t) + \gamma_j \spacestep\, \sum_{\alpha=1}^{G} w_\alpha \bB{1}^{\tt t}\bB{\sigma}_{\bB{c}}\bB{r}\big(\bB{p}_j^{\bB{c}}(z_{j}^{\alpha}),\bB{p}_j^{\bB{s}}(z_{j}^{\alpha}) \big)\,,
\end{align*}
with $\gamma_j = \gamma(z_j)$.

\subsection{Component concentration equations} \label{sec:component:concentration:equations}
We first observe that Equations \eqref{eq:model:c} and \eqref{eq:model:s} are both linear in their respective vectors of unknowns, $\bB{c}$ and $\bB{s}$, therefore an upwind scheme is the natural choice to approximate the related vector fluxes. Furthermore, as mentioned earlier in Section~\ref{sec:model:equations}, the approximation of \eqref{eq:model:c} should be designed such that the sum over each component equation equals the approximation of Equation~\eqref{eq:model:u}. This condition is required in order to ensure that the sum of the components of the discrete solution match the approximation of the total concentration of solids.
To establish the higher-order accuracy, we proceed as for the total concentration of solids and make use of polynomial reconstructions on each component of the vectors $\bB{c}$ and $\bB{s}$, respectively. Then, given the vector functions $\bpoln^{\bB{c}}_j$ and $\bpoln^{\bB{s}}_j$, each one composed of $n_{\bB{c}}$ and $n_{\bB{s}}$ polynomials of degree $\ell-1$, respectively, we denote the polynomial reconstructions of $\bB{c}$ and $\bB{s}$ respectively by $\widehat{\bB{c}}_{j+1/2}^{\rm L}$ (resp.~$\widehat{\bB{c}}_{j+1/2}^{\rm R}$) and $\widehat{\bB{s}}_{j+1/2}^{\rm L}$ (resp.~ $\widehat{\bB{s}}_{j+1/2}^{\rm R}$). Hence, the approximation of the (convective) fluxes are given by
\begin{align}
& \widehat{\mathbcal{G}}^{\bB{c}}_{j+1/2} :=
{\rm Upw}\Bigl(\widehat{\mathcal{V}}_{j+1/2}-{\mathcal{E}}_{j+1/2};\,\widehat{\bB{c}}_{j+1/2}^{\rm L},\,\widehat{\bB{c}}_{j+1/2}^{\rm R} \Bigr)\,, \label{eq:numfluxP}\\
& \widehat{\mathbcal{G}}^{\bB{s}}_{j+1/2} := {\rm Upw}
 \Biggl( \rho q_{j+1/2} - \widehat{\mathcal{F}}_{j+1/2} ;\,
 \dfrac{\widehat{\bB{s}}_{j+1/2}^{\rm L}}{\rho - \widehat{u}_{j+1/2}^{\rm L}},\,
 \dfrac{\widehat{\bB{s}}_{j+1/2}^{\rm R}}{\rho - \widehat{u}_{j+1/2}^{\rm R}}\Biggr)\,, \label{eq:numfluxS}
\end{align}
where $\widehat{\mathcal{V}}_{j+1/2}$, $\mathcal{E}_{j+1/2}$ and $\widehat{\mathcal{F}}_{j+1/2}$ are defined in Section~\ref{sec:total:concentration:equations} and the polynomial reconstructions at the cell boundaries are given, as in the case of $u$, by
\begin{align*}
 \widehat{\bB{c}}_{j+1/2}^{\rm L}  = \bpoln^{\bB{c}}_j(z_{j+1/2})\,, \quad  \widehat{\bB{c}}_{j+1/2}^{\rm R}  = \bpoln^{\bB{c}}_{j+1}(z_{j+1/2})\,,\quad
 \widehat{\bB{s}}_{j+1/2}^{\rm L}  = \bpoln^{\bB{s}}_j(z_{j+1/2}) \quad \mbox{and}\quad \widehat{\bB{s}}_{j+1/2}^{\rm R}  = \bpoln^{\bB{s}}_{j+1}(z_{j+1/2})\,.
\end{align*}
The right-hand sides of Equations \eqref{eq:model:c} and \eqref{eq:model:s}, which involve feed and reaction terms, are approximated in the same way as for the total concentration equation by
\begin{align*}
\int_{I_j} \bB{b}_{\bB{c}}\big(\bB{c},\bB{s},z,t\big){\rm d}z
& \,\approx\, \delta_{j,j_{\rm f}}q_{\rm f}(t)\bB{c}_{\rm f}(t) + \gamma_j\, \spacestep\sum_{\alpha=1}^G  w_{\alpha} \bB{\sigma}_{\bB{c}}\bB{r}\big(\bB{p}_j^{\bB{c}}(z_j^\alpha),\bB{p}_j^{\bB{s}}(z_j^\alpha)\big),\\
 \int_{I_j} \bB{b}_{\bB{s}}\big(\bB{c},\bB{s},z,t\big){\rm d}z &
 \,\approx\, \delta_{j,j_{\rm f}}q_{\rm f}(t)\bB{s}_{\rm f}(t) + \gamma_j\,\spacestep
  \sum_{\alpha=1}^G  w_{\alpha}\bB{\sigma}_{\bB{s}}\bB{r}\big(\bB{p}_j^{\bB{c}}(z_j^\alpha),\bB{p}_j^{\bB{s}}(z_j^\alpha)\big),
\end{align*}
where the quadrature weights $ w_{\alpha}$ and nodes $z_j^\alpha$ for all $\alpha=1,\dots,G$ have been defined in Section~\ref{sec:total:concentration:equations}.

\subsection{Semi-discrete scheme} \label{sec:semi-discrete:scheme}

Collecting the spatial approximations described in Sections~\ref{sec:total:concentration:equations} and \ref{sec:component:concentration:equations} for the equations in the reactive sedimentation model \eqref{syst:final}, we can write the spatially discretized (semi-discrete) scheme as the following ODE system for each $j=0,1,\dots,N+1$, and  $0<t\leq T$:
\begin{subequations}\label{syst:edos}
\begin{align}
\dfrac{\rd u_j}{\rd t}(t) & = - \dfrac{1}{\spacestep}(\widehat{\mathcal{F}}_{j+1/2}-\widehat{\mathcal{F}}_{j-1/2}) + \dfrac{1}{\spacestep}\delta_{j,j_{\rm f}}q_{\rm f}u_{\rm f}
 + \, \gamma_j\sum_{\alpha=1}^G  w_{\alpha} \bB{1}^{\tt t} \bB{\sigma}_{\bB{c}}\bB{r}\big(\bB{p}_j^{\bB{c}}(z_j^\alpha),\bB{p}_j^{\bB{s}}(z_j^\alpha)\big)
\,, \label{eq:edo:u} \\
\dfrac{\rd \bB{c}_j}{\rd t} (t) & = - \dfrac{1}{\spacestep}\big(\widehat{\mathbcal{G}}^{\bB{c}}_{j+1/2} - \widehat{\mathbcal{G}}^{\bB{c}}_{j-1/2}\big)  + \dfrac{1}{\spacestep}\delta_{j,j_{\rm f}}q_{\rm f}\bB{c}_{\rm f} + \, \gamma_j\sum_{\alpha=1}^G  w_{\alpha} \bB{\sigma}_{\bB{c}}\bB{r}\big(\bB{p}_j^{\bB{c}}(z_j^\alpha),\bB{p}_j^{\bB{s}}(z_j^\alpha)\big)\,, \label{eq:edo:p}\\[1ex]
\dfrac{\rd \bB{s}_j}{\rd t} (t) & = - \dfrac{1}{\spacestep}\big(\widehat{\mathbcal{G}}^{\bB{s}}_{j+1/2} - \widehat{\mathbcal{G}}^{\bB{s}}_{j-1/2}\big)  + \dfrac{1}{\spacestep}\delta_{j,j_{\rm f}}q_{\rm f}\bB{s}_{\rm f} + \,
\, \gamma_j\sum_{\alpha=1}^G  w_{\alpha} \bB{\sigma}_{\bB{s}}\bB{r}\big(\bB{p}_j^{\bB{c}}(z_j^\alpha),\bB{p}_j^{\bB{s}}(z_j^\alpha)\big)
\,, \label{eq:edo:s}
\end{align}
\end{subequations}
where the ghost cells $j=0$ and $j=N+1$ are included to properly compute the fluxes in the numerical scheme. Note that in principle, as $\gamma$ controls the inner and outer part of the sedimentation tank, the calculation at the ghost cells is straightforward from the scheme. This method-of-lines formulation, is going to be particularly helpful for the implementation of a higher-order time approximation method in Section~\ref{sec:time:approximation}. In addition, to simplify the notation in the upcoming sections, we introduce the vectors containing the cell values related to the stencil of the scheme of the discretized unknowns $\mathbf{u}_{{\tt st},j}(t)\in\mathbb{R}^{3}$, $\mathbf{c}_{{\tt st},j}(t)\in \mathbb{R}^{n_{\bB{c}}\times 3}$ and $\mathbf{s}_{{\tt st},j}(t)\in \mathbb{R}^{n_{\bB{s}}\times 3}$ for $t>0$, such that
\begin{align}\label{eq:stencil:vectors}
\mathbf{u}_{{\tt st},j}(t)&:= \begin{pmatrix}
                     u_{j-1}(t),
                     u_j(t),
                     u_{j+1}(t)
                    \end{pmatrix}\,,\\
\mathbf{c}_{{\tt st},j}(t)&:= \begin{pmatrix}
                     \bB{c}_{j-1}(t),
                     \bB{c}_j(t),
                     \bB{c}_{j+1}(t)
                    \end{pmatrix}\,,\\
\mathbf{s}_{{\tt st},j}(t)&:= \begin{pmatrix}
                     \bB{s}_{j-1}(t),
                     \bB{s}_j(t),
                     \bB{s}_{j+1}(t)
                    \end{pmatrix}\,,
\end{align}
for all $j=0,1,\dots,N+1$. Moreover, we define the functions corresponding to the discretized differential operators and source terms of each equation of system~\eqref{syst:edos}  respectively as
\begin{align*}
 \mathcal{H}^u_j\big(\mathbf{u}_{{\tt st},j}^{n},\mathbf{c}_{{\tt st},j}^{n},\mathbf{s}_{{\tt st},j}^{n}\big)&:= - \dfrac{1}{\spacestep}(\widehat{\mathcal{F}}_{j+1/2}-\widehat{\mathcal{F}}_{j-1/2})  + \dfrac{1}{\spacestep}\delta_{j,j_{\rm f}}q_{\rm f}u_{\rm f}
  + \, \gamma_j\sum_{\alpha=1}^G  w_{\alpha} \bB{1}^{\tt t} \bB{\sigma}_{\bB{c}}\bB{r}\big(\bB{p}_j^{\bB{c}}(z_j^\alpha),\bB{p}_j^{\bB{s}}(z_j^\alpha)\big)
\,,\\
 \mathbcal{H}^{\bB{c}}_j\big(\mathbf{u}_{{\tt st},j}^{n},\mathbf{c}_{{\tt st},j}^{n},\mathbf{s}_{{\tt st},j}^{n}\big)  &:= - \dfrac{1}{\spacestep}\big(\widehat{\mathbcal{G}}^{\bB{c}}_{j+1/2} - \widehat{\mathbcal{G}}^{\bB{c}}_{j-1/2}\big)  +  \dfrac{1}{\spacestep}\delta_{j,j_{\rm f}}q_{\rm f}\bB{c}_{\rm f} + \, \gamma_j\sum_{\alpha=1}^G  w_{\alpha} \bB{\sigma}_{\bB{c}}\bB{r}\big(\bB{p}_j^{\bB{c}}(z_j^\alpha),\bB{p}_j^{\bB{s}}(z_j^\alpha)\big)\,,\\
 \mathbcal{H}^{\bB{s}}_j\big(\mathbf{u}_{{\tt st},j}^{n},\mathbf{c}_{{\tt st},j}^{n},\mathbf{s}_{{\tt st},j}^{n}\big)  &:= - \dfrac{1}{\spacestep}\big(\widehat{\mathbcal{G}}^{\bB{s}}_{j+1/2} - \widehat{\mathbcal{G}}^{\bB{s}}_{j-1/2}\big)  + \dfrac{1}{\spacestep}\delta_{j,j_{\rm f}}q_{\rm f}\bB{s}_{\rm f} + \, \gamma_j \sum_{\alpha=1}^G  w_{\alpha} \bB{\sigma}_{\bB{s}}\bB{r}\big(\bB{p}_j^{\bB{c}}(z_j^\alpha),\bB{p}_j^{\bB{s}}(z_j^\alpha)\big)\,.
\end{align*}

\section{Polynomial reconstructions} \label{sec:polynomial:reconstructions}

In this section, we address the polynomial reconstructions appearing at the numerical fluxes of the space discretization in system \eqref{syst:edos}. We describe two cases, a second-order reconstruction based on a MUSCL scheme \cite{kolgan1972application,van1979towards}, and a third-order central WENO scheme (CWENO3) proposed in \cite{Levy2000}.
Moreover, for all $j=1,\dots,N$, the vectors of polynomials $\bpoln_j^{\bB{c}}$ and $\bpoln_j^{\bB{s}}$ associated with \eqref{eq:numfluxP} and \eqref{eq:numfluxS}, respectively, are such that
\begin{align*}
\bpoln_j^{\bB{c}}(z):=\big(\poln_j^{\bB{c},(1)}(z),\poln_j^{\bB{c},(2)}(z),\dots,\poln_j^{\bB{c},(n_{\bB{c}})}(z)\big)^{\tt t}\,,\qquad
\bpoln_j^{\bB{s}}(z):=\big(\poln_j^{\bB{s},(1)}(z),\poln_j^{\bB{s},(2)}(z),\dots,\poln_j^{\bB{s},(n_{\bB{s}})}(z)\big)^{\tt t}\,,
\end{align*}
where $\smash{\poln_j^{\bB{c},(i)}}$ is related to the $\smash{c_j^{(i)}}$ concentration component for all $\smash{i\in\{1,\dots,n_{\bB{c}}\}}$, and analogously, $\smash{\poln_j^{\bB{s},(k)}(z)}$ is related to the $\smash{s_j^{(k)}}$ substrate component for all $k\in\{1,\dots,n_{\bB{s}}\}$. For notational convenience, the time dependence is suppressed in the polynomials above and will likewise be omitted throughout the remainder of this section.

Every polynomial $\poln_j^{\bB{c},(i)}$ and $\poln_j^{\bB{s},(k)}$ is assumed to have the same polynomial degree, and will be computed employing a similar procedure as for the polynomial $\poln_j^{u}(z)$ associated with \eqref{eq:numfluxF}. Therefore, in what follows, we will concentrate the description on the polynomial reconstructions corresponding to $\poln_j^{u}(z)$ and provide the key ingredients for the construction of the other polynomials.

\subsection{Second-order reconstruction}\label{sec:MUSCL}

The second-order accuracy, for $\ell=2$, is obtained by employing a linear reconstruction of $u$ over each interval $I_j$. To do so, we use a standard MUSCL scheme \cite{van1979towards} combined with a minmod slope limiter so that the linear polynomial reconstruction at each $j$-cell is given by
\begin{align}
 \poln_j^u(z) : = u_{j} + \sigma_j^u(z-z_j)\,,\qquad\mbox{for } z\in I_j\,, \label{eq:muscl:pu}
\end{align}
where the slope $\sigma_j^u$ is  computed by
\begin{align*}
 \sigma_j^u :=\dfrac{1}{h} {\rm minmod}\Big(\alpha_{\rm M} (u_{j} - u_{j-1}),\tfrac{1}{2}(u_{j+1}-u_{j-1}),\alpha_{\rm M} (u_{j+1} - u_{j})\Big)\,,
\end{align*}
with parameter $\alpha_{\rm M}\in(1,2)$ and $\rm minmod$ function defined as
\begin{align*}
{\rm minmod}(a,b,c) =
\begin{cases}
   {\rm sign}(a)\min\big\{|a|,|b|,|c|\big\} & \mbox{if }{\rm sign}(a) = {\rm sign}(b) = {\rm sign}(c)\,,\\
   0 & \mbox{otherwise}\,,
  \end{cases}
\end{align*}
for all $a,b,c\in \mathbb{R}$, where ${\rm sign}(a)$ returns the sign of the real number $a$. For the vector of solid and substrate components, we define the MUSCL polynomial reconstructions as follows
\begin{alignat}{2}
 p_j^{\bB{c},(i)}(z) & = c_{j}^{(i)} + \sigma_j^{\bB{c}, i}(z-z_j),&\quad &\text{for } i=1,\dots,\nsp, \label{eq:muscl:pc}\\
 p_j^{\bB{s},(k)}(z) & = s_{j}^{(k)} + \sigma_j^{\bB{s}, k}(z-z_j),&\quad &\text{for } k=1,\dots,\ssp, \label{eq:muscl:ps}
\end{alignat}
for all $z\in I_j$, where
\begin{align*}
\sigma_j^{\bB{c},i} &:=
\dfrac{1}{h}
\begin{cases}
\alpha_{\rm M} \big(c_{j}^{(i)} - c_{j-1}^{(i)}\big) &\text{ if } \sigma^u_j = \alpha_{\rm M}(u_{j} - u_{j-1}),\\[1ex]
\tfrac{1}{2}\big(c_{j+1}^{(i)}-c_{j-1}^{(i)}\big) & \text{ if } \sigma_j^u = \tfrac{1}{2}(u_{j+1}-u_{j-1}),\\[1ex]
\alpha_{\rm M} \big(c_{j+1}^{(i)} - c_{j}^{(i)}\big) & \text{ if } \sigma_j^u = \alpha_{\rm M} (u_{j+1} - u_{j}),\\[1ex]
0 & \text{ otherwise},
\end{cases}\\
\sigma_j^{\bB{s},k} &:=\dfrac{1}{h} {\rm minmod}\Big(\alpha_{\rm M} (s_{j}^{(k)} - s_{j-1}^{(k)}),\tfrac{1}{2}(s_{j+1}^{(k)}-s_{j-1}^{(k)}),\alpha_{\rm M} (s_{j+1}^{(k)} - s_{j}^{(k)})\Big)\,.
\end{align*}
From the definition of $p_j^u$, $\bB{p}_j^{\bB{c}}$, and $\bB{p}_j^{\bB{s}}$, it is straightforward to show that the average-preserving property \eqref{eq:average:property:reconsturctions} is fulfilled for each polynomial, that is
\begin{align*}
 u_j = \dfrac{1}{h}\int_{I_j} p_j^u(z)\,{\rm d}z,\quad
 \bB{c}_j = \dfrac{1}{h}\int_{I_j} \bB{p}_j^{\bB{c}}(z)\,{\rm d}z,\quad
 \bB{s}_j = \dfrac{1}{h}\int_{I_j} \bB{p}_j^{\bB{s}}(z)\,{\rm d}z, \quad \text{for all }j.
\end{align*}
The following lemma states that the MUSCL reconstruction satisfies an invariant-region property.
\begin{lemma}\label{lem:IRP:poly:muscl:u:c:s}
 Consider the polynomials $p_j^u$, $\smash{\bB{p}_j^{\bB{c}}}$ and $\smash{\bB{p}_j^{\bB{s}}}$ defined in \eqref{eq:muscl:pu}, \eqref{eq:muscl:pc} and \eqref{eq:muscl:ps}, respectively.
 Assume that $(u_j,\bB{c}_j,\bB{s}_j)\in \mathbcal{W}$. Then, there holds
\begin{align*}
 \big(p_j^u(z), \bB{p}_j^{\bB{c}}(z), \bB{p}_j^{\bB{s}}(z)\big)\in \mathbcal{W}, \qquad  \text{for all } z\in I_j.
\end{align*}
\end{lemma}
\begin{proof}
We first assume that $\sigma_j^u > 0$, from which we directly have
 \begin{align*}
  \sigma_j^u \leq \dfrac{1}{h}\alpha_M (u_{j} - u_{j-1}),\qquad
  \sigma_j^u \leq \dfrac{1}{2h}(u_{j+1}-u_{j-1}),\qquad
  \sigma_j^u \leq \dfrac{1}{h}\alpha_M (u_{j+1} - u_{j}).
 \end{align*}
Then, we can conclude that $ 0\leq  u_{j-1}<u_j<u_{j+1} \leq u_{\rm max}$. Now, since $p_j^u$ is a linear and increasing polynomial with respect to $z$, we only need to show the bounds at the limits $z_{j\pm 1/2}$, that is
\begin{align*}
 p_j^u(z_{j-1/2}) &= u_j - \dfrac{h}{2}\sigma_j^u \geq (1-\widetilde{\alpha}_M)u_j + \widetilde{\alpha}_M u_{j-1}\geq 0,\\
 p_j^u(z_{j+1/2}) &= u_j + \dfrac{h}{2}\sigma_j^u \leq
 (1-\widetilde{\alpha}_M)u_j + \widetilde{\alpha}_M u_{j+1}\leq u_{\max},
\end{align*}
where $\widetilde{\alpha}_M = \alpha_M/2$. The case $\sigma_j^u <0$ is analogous, and $\sigma_j^u =0$ is trivial. The bounds $p_j^{\bB{c},(i)}\geq0$ and $p_j^{\bB{s},(k)}\geq0$ can be shown in the same way as for $p_j^u$ for all $i=1,\dots,\nsp$ and $k=1,\dots,\ssp$, and thanks to the definition of the slope $\sigma_j^{\bB{c},i}$, we have
\begin{align*}
 \bB{1}^{\tt t}\bB{p}^{\bB{c}}_j(z) = \sum_{i=1}^{\nsp} c_j^{(i)} + \sigma_j^{\bB{c}, i}(z-z_j) = u_j + \sum_{i=1}^{\nsp} \sigma_j^{\bB{c},i} (z-z_j) = u_j + \sigma_j^u (z-z_j) = p_j^u(z),
\end{align*}
which concludes the proof.
\end{proof}

\subsection{Third-order reconstruction} \label{sec:CWENO}
To achieve third-order accuracy, we set $\ell = 3$ and consider the compact central WENO scheme introduced in \cite{Levy2000}, in which the polynomial reconstruction combines linear and quadratic polynomials.
We will refer to this third-order scheme as CWENO3. For ease of presentation, we drop the sub-index $j$ when describing the linear and quadratic polynomials involved in this reconstruction. In turn, we define the left and right linear interpolation polynomials as
\begin{align}\label{def:polLR}
 \poln_{{\rm L}}^u(z) := u_{j-1} + \dfrac{u_j-u_{j-1}}{h}(z-z_{j-1})\,,\qquad
 \poln_{{\rm R}}^u(z) := u_{j} + \dfrac{u_{j+1}-u_{j}}{h}(z-z_{j})\,,
\end{align}
respectively, and the so-called \emph{central polynomial} denoted by $\poln_{\rm C}^u$, is defined as a quadratic polynomial computed with the values of $u_{j+1}$, $u_{j}$ and $u_{j-1}$, see Figure \ref{fig:fig2}. The polynomial reconstruction of $u$ at the cell $I_j$ is then determined by the combination of these three polynomials in the following way
\begin{equation}\label{eq:polREC}
 \poln^{u}_j(z)\,:=\,w_{\rm L}\,\poln_{\rm L}^u(z) + w_{\rm C}\,\poln_{\rm C}^u(z) + w_{\rm R}\,\poln_{\rm R}^u(z)\,,
\end{equation}
where $w_{\rm L}$, $w_{\rm C}$ and $w_{\rm R}$ are positive weights that add up to one. The central polynomial $\poln_{\rm C}^{u}$ depends on $\poln_{\rm L}^{u}$ and $\poln_{\rm R}^{u}$, and the quadratic optimal polynomial:
\begin{align*}
 \poln_{\rm opt}^u(z) &:= u_j - \dfrac{u_{j+1} - 2u_j + u_{j-1}}{24} + \dfrac{u_{j+1}-u_{j-1}}{2h}(z-z_j) + \dfrac{u_{j+1}-2u_j+u_{j-1}}{2h^2}(z-z_j)^2\,,
\end{align*}
such that
\begin{align*}
 \poln_{\rm C}^u(z) :=&\, 2\poln_{\rm opt}^u(z) - \dfrac{1}{2}\Big(\poln_{\rm L}^u(z)+\poln_{\rm R}^u(z)\Big) \\
 =&\, u_j - \dfrac{u_{j+1} - 2u_j+u_{j-1}}{12} + \dfrac{u_{j+1}-u_{j-1}}{2h}(z-z_j) + \dfrac{u_{j+1}-2u_j+u_{j-1}}{h^2}(z-z_j)^2 \,.
\end{align*}
The above formula is given for the specific choice of constants $C_{\rm L} = C_{\rm R} = 1/4$ and $C_{\rm C} = 1/2$, which appear in the following definition of the weights:
\begin{align} \label{eq:weights}
 w_K := \dfrac{\alpha_K}{\alpha_{\rm L}+\alpha_{\rm C}+\alpha_{\rm R}}\,,\qquad\alpha_K = \dfrac{C_K}{(\varepsilon + \mathcal{S}_{K}^u)^2}\qquad\text{for } K\in\{\rm L,C,R\}\,,
\end{align}
where the indicators of smoothness of $\poln_{\rm L}^u$, $\poln_{\rm R}^u$ and $\poln_{\rm C}^u$ are respectively given by
\begin{subequations} \label{eq:definition:SL:SR:SC}
\begin{align}
 &\mathcal{S}_{{\rm L}}^u
 := (u_j-u_{j-1})^2,\\
 &\mathcal{S}_{{\rm R}}^u
 := (u_{j+1}-u_j)^2\,,\\
 &\mathcal{S}_{{\rm C}}^u
 := \dfrac{13}{12}\bigl(u_{j+1}-2u_j+u_{j-1}\bigr)^2+\dfrac{1}{4}\bigl(u_{j+1}-u_{j-1}\bigr)^2.
\end{align}
\end{subequations}
The constant $\varepsilon>0$ in the definition of the weights \eqref{eq:weights} is taken to prevent the denominator from vanishing.
In addition, the values of $\varepsilon$ should be such that $\varepsilon + h^2 u^2_x\gg |\mathcal{S}_K^u - h^2u_x^2|$ in smooth regions, and that $\varepsilon\ll \mathcal{S}_K^u$ near discontinuities for $K\in \{{\rm L},{\rm C},{\rm R}\}$.
\begin{figure}[t!]
\centering
 \includegraphics[scale=0.8]{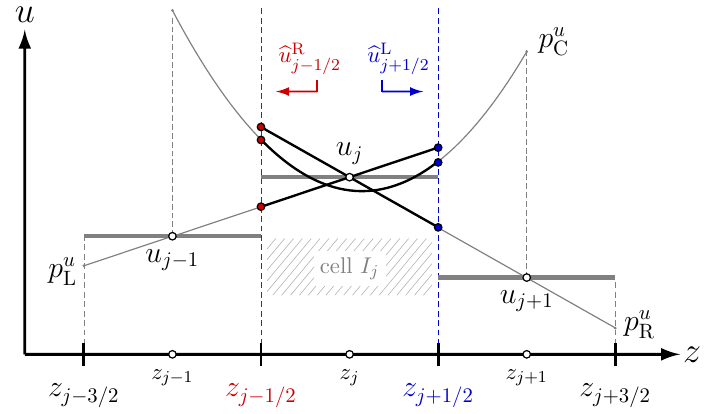}
 \caption{Illustration of the polynomials $\poln_{\rm L}^u$, $\poln_{\rm R}^u$ and $\poln_{\rm C}^u$ related to the reconstruction $\poln_{j}^{u}$ defined in \eqref{eq:polREC} at the cell $I_j = [z_{j-1/2},z_{j+1/2}]$ for the CWENO3 scheme from Section~\ref{sec:CWENO}. The coloured dots are used in the construction of $\widehat{u}_{j-1/2}^{\rm R}$ (red) and $\widehat{u}_{j+1/2}^{\rm L}$ (blue).\label{fig:fig2}}
\end{figure}

To highlight the functional dependencies in the calculation of the polynomial reconstructions, we introduce $\mathcal{P}_j$ as the function that given $u_{j-1}$, $u_j$ and $u_{j+1}$ defines the quadratic polynomial \eqref{eq:polREC} described above, that is
\begin{align}
p_j^u(z) =:\mathcal{P}_j\Big(z,u_{j-1},u_j,u_{j+1};\,\mathcal{S}_{\rm L}^u,\mathcal{S}_{\rm C}^u,\mathcal{S}_{\rm R}^u\Big)\,. \label{eq:puj}
\end{align}
Then, we can directly introduce the polynomial reconstructions for the vectors of solid concentrations and substrates as follows
\begin{alignat}{2}
 p_j^{\bB{c},(i)} &= \mathcal{P}_j\Big(z,c_{j-1}^{(i)},c_j^{(i)},c_{j+1}^{(i)};\,
 \mathcal{S}_{\rm L}^u,\mathcal{S}_{\rm C}^u,\mathcal{S}_{\rm R}^u\Big),\quad &\text{ for }&i=1,\dots,\nsp, \label{eq:pcj}\\
  p_j^{\bB{s},(k)}& = \mathcal{P}_j\Big(z,s_{j-1}^{(k)},s_j^{(k)},s_{j+1}^{(k)};\,
  \mathcal{S}_{\rm L}^{\bB{s},k},\mathcal{S}_{\rm C}^{\bB{s},k},\mathcal{S}_{\rm R}^{\bB{s},k}\Big),
  \quad &\text{ for }&k=1,\dots,\ssp, \label{eq:psj}
\end{alignat}
with $\mathcal{S}_{\rm L}^{\bB{s},k}$, $\mathcal{S}_{\rm C}^{\bB{s},k}$ and $\mathcal{S}_{\rm R}^{\bB{s},k}$ defined as in \eqref{eq:definition:SL:SR:SC} by
\begin{align*}
 &\mathcal{S}_{{\rm L}}^{\bB{s},k}
 := (s^{(k)}_j-s^{(k)}_{j-1})^2,\\
 &\mathcal{S}_{{\rm R}}^{\bB{s},k}
 := (s^{(k)}_{j+1}-s^{(k)}_j)^2\,,\\
 &\mathcal{S}_{{\rm C}}^{\bB{s},k}
 := \dfrac{13}{12}\bigl(s^{(k)}_{j+1}-2s^{(k)}_j+s^{(k)}_{j-1}\bigr)^2+\dfrac{1}{4}\bigl(s^{(k)}_{j+1}-s^{(k)}_{j-1}\bigr)^2,
\end{align*}
where we have also omitted index $j$ as in the previous description for the total concentration. Since by construction the left, right and central interpolation polynomials are linear in terms of the discretized values of the unknowns, and since for $\bB{p}_j^{\bB{c}}$ we have evaluated $\mathcal{S}_{\rm L}$, $\mathcal{S}_{\rm C}$, and $\mathcal{S}_{\rm R}$ on terms depending on the total concentration, we can show that
\begin{align}\label{eq:sum:property:pu:pc}
 p_j^u = \sum_{i=1}^{\nsp} p_j^{\bB{c},(i)} = \bB{1}^{\tt t}\bB{p}_j^{\bB{c}}.
\end{align}

\subsection{Positivity preserving scaling limiters}\label{sec:limiters}
It is well known that the reconstruction procedure described above leads to  schemes that resolve discontinuities sharply but in some cases fail to preserve the positivity of solutions. This shortcoming motivated the development of the linear scaling limiter by Zhang and Shu \cite{zhang2010maximum,zhang2011maximum}.
We begin describing this limiter for the polynomial reconstructions of $u_j$ and $\bB{c}_j$,  respectively. To implement this limiter, we replace the polynomials in \eqref{eq:puj} and \eqref{eq:pcj} by
\begin{align}
	\widetilde{p}_j^{u}(z) &:= \theta_j^{\rm solid} \bigl(p_j^{u}(z)-u_{j} \bigr)+u_j,\label{eq:ptilde:u}\\[1ex]
	\widetilde{p}_j^{\bB{c},(i)}(z) &:= \theta_j^{\rm solid} \bigl(p_j^{\bB{c},(i)}(z)-c^{(i)}_{j} \bigr)+c^{(i)}_j,\quad \text{ for }i=1,\dots,\nsp,\label{eq:ptilde:c}
\end{align}
where $\smash{\theta_j^{\rm solid}}$ is defined such that $\smash{0\leq \widetilde{p}_j^{u}\leq u_{\max}}$, and in addition, due to the connection between vector $\bB{c}$ and $u$, we impose the stronger condition that $\smash{p_j^{\bB{c},(i)}\geq 0}$ for all $i$ in the definition of $\theta_j^{\rm solid}$, which thanks to Equation \eqref{eq:sum:property:pu:pc} also implies $p_j^u\geq 0$. Then, we set
\begin{align} \label{eq:theta:c}
\theta_j^{\rm solid} &= \min\left\{1,\ \frac{u_{\max}-u_j}{M^u_j-u_j},\,	\min_{1\leq i\leq\nsp}\frac{c^{(i)}_j}{c^{(i)}_j-m^{\bB{c},i}_j}\right\},
\end{align}
with
\begin{align}\label{eq:mju}
m^{\bB{c},i}_j:=\min_{z\in I_j} p^{\bB{c},(i)}_j (z),\qquad M^u_j:=\max_{z\in I_j}p^u_j(z).
\end{align}
Then, using the modified polynomial including the limiter, the reconstructed variables in \eqref{eq:numfluxF} are now computed as $\widehat{u}_{j+1/2}^{\rm L} := \widetilde{\poln}^u_j(z_{j+1/2})$ and $\widehat{u}_{j+1/2}^{\rm R} := \widetilde{\poln}^u_{j+1}(z_{j+1/2})$, and analogously for the reconstruction of vector $\bB{c}_j$. Observe that from the definition of $\theta_j^{\rm solid}$ and Equation~\eqref{eq:sum:property:pu:pc}, we can show that
\begin{align} \label{eq:sum:tilde:pc:tilde:pu}
 \bB{1}^{\tt t}\widetilde{\bB{p}}^{\bB{c}}_j(z) = \sum_{i=1}^{\nsp} p^{\bB{c},(i)}_j(z)
 = \sum_{i=1}^{\nsp}\Big(\theta_j^{\rm solid} \big(p_j^{\bB{c},(i)}(z)-c^{(i)}_{j} \big)+c^{(i)}_j\Big) = \theta_j^{\rm solid}\sum_{i=1}^{\nsp} \big(p_j^{\bB{c},(i)}(z)-c^{(i)}_{j} \big)+u_j
 = \widetilde{p}_j^u(z).
\end{align}
For the polynomial reconstruction related to the vector of substrates, unlike the case of the solids related variables, we only require non-negativity of each component, therefore we define
\begin{align}
 \widetilde{p}_j^{\bB{s},(k)}(z)& := \theta_{k,j}^{\rm fluid} \bigl(p_j^{\bB{s},(k)}(z)-s_{j}^{(k)} \bigr)+s_j^{(k)},\quad \text{ for all }k=1,\dots,\ssp,\label{eq:ptilde:s}
\end{align}
where
\begin{align}\label{eq:theta:s}
\theta_{k,j}^{\rm fluid} & =\min\left\{1,\,	\dfrac{s^{(k)}_j}{s^{(k)}_j-m^{\bB{s},k}_j}\right\}, \quad \text{with } m^{\bB{s},k}_j=\min_{z\in I_j} p^{\bB{s},(k)}_j (z).
\end{align}
Next, we recall some useful inequalities for the minimum and maximum values related to the polynomial reconstructions involved in the definition of the slopes in \eqref{eq:theta:c} and \eqref{eq:theta:s}, respectively. The polynomial reconstruction of the total solids concentration satisfies
\begin{align}\label{eq:m:M:u}
\min_{z\in I_j}p^u_j(z) =:m_j^u \leq u_j = \dfrac{1}{\spacestep}\int_{I_j}p_j^u(z)\,{\rm d}z\leq M_j^u\,,
\end{align}
Furthermore, for the minimum values of the reconstructions for the vectors of solids and substrates components, we have
\begin{alignat}{2}
m_j^{\bB{c},i} \leq c_j^{(i)} \leq u_j &\leq M_j^u, &\qquad
m_j^{\bB{c},i}&\leq p_j^{\bB{c},(i)},\qquad \text{for }i=1,\dots,\nsp\,, \label{eq:m:min:c}\\
 m_j^{\bB{s},k} &\leq s_j^{(k)},&\qquad m_j^{\bB{s},k}&\leq p_j^{\bB{s},(k)},\qquad \text{for }k=1,\dots,\ssp\,,\label{eq:m:min:s}
\end{alignat}
where from the second equation in \eqref{eq:m:min:c}, we conclude that
\begin{align}\label{eq:sum:m:pu}
 \sum_{i=1}^{\nsp} m_j^{\bB{c},i}\leq \bB{1}^{\tt t}\bB{p}_j^{\bB{c}} = p_j^{u}.
\end{align}
The above inequalities allow us to state that $0\leq \theta_j^{\rm solid},\theta_{j,k}^{\rm fluid}\leq 1$ for all $k$, which implies that \eqref{eq:ptilde:u}, \eqref{eq:ptilde:c} and \eqref{eq:ptilde:s} are in fact, convex combinations between the averaged values and the reconstructed polynomials. The next lemma shows that the limiters preserve the invariant-region property.

\begin{lemma}\label{lem:IRP:poly:weno:u:c:s}
 Consider the polynomial reconstructions $\widetilde{p}_j^u$, $\smash{\widetilde{\bB{p}}_j^{\bB{c}}}$ and $\smash{\widetilde{\bB{p}}_j^{\bB{s}}}$ for the third-order CWENO3 scheme defined in \eqref{eq:ptilde:u}, \eqref{eq:ptilde:c} and \eqref{eq:ptilde:s}, respectively. Assume that $(u_j,\bB{c}_j,\bB{s}_j)\in \mathbcal{W}$ for all $j$. Then, there holds
\begin{align*}
 \big(\widetilde{p}_j^u(z), \widetilde{\bB{p}}_j^{\bB{c}}(z),\widetilde{\bB{p}}_j^{\bB{s}}(z)\big)\in \mathbcal{W}, \qquad  \text{for all } z\in I_j.
\end{align*}
\end{lemma}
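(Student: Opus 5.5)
The plan is to check the four conditions defining $\mathbcal{W}$ one at a time, for every fixed $z\in I_j$, using three facts: $\theta_j^{\rm solid},\theta_{k,j}^{\rm fluid}\in[0,1]$; the limited polynomials are convex combinations of the original polynomial and the cell average; and the sum identity \eqref{eq:sum:tilde:pc:tilde:pu}.

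First I will check that the limiter parameters are well defined and lie in $[0,1]$. By \eqref{eq:m:M:u} and \eqref{eq:m:min:c} we have $m_j^{\bB{c},i}\le c_j^{(i)}$ and $M_j^u\ge u_j$. If a denominator vanishes, for instance $c_j^{(i)}=m_j^{\bB{c},i}$, then that ratio is read as $+\infty$ and drops out of the minimum. Since $c_j^{(i)}\ge 0$ and $u_j\le u_{\max}$, every ratio is non-negative, so $0\le\theta\le1$.

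\emph{Lower bounds for the solid components.} Fix $i$ and write $\widetilde{p}_j^{\bB{c},(i)}(z)=c_j^{(i)}+\theta_j^{\rm solid}\bigl(p_j^{\bB{c},(i)}(z)-c_j^{(i)}\bigr)$. If $p_j^{\bB{c},(i)}(z)\ge c_j^{(i)}$, the value is at least $c_j^{(i)}\ge0$. Otherwise $p_j^{\bB{c},(i)}(z)-c_j^{(i)}\ge m_j^{\bB{c},i}-c_j^{(i)}$, which is negative. Using $\theta_j^{\rm solid}\le c_j^{(i)}/(c_j^{(i)}-m_j^{\bB{c},i})$ then gives
\[
\widetilde{p}_j^{\bB{c},(i)}(z)\ge c_j^{(i)}-\theta_j^{\rm solid}\bigl(c_j^{(i)}-m_j^{\bB{c},i}\bigr)\ge 0.
\]
The substrate bound $\widetilde{p}_j^{\bB{s},(k)}\ge0$ follows by exactly the same argument, with $\theta_{k,j}^{\rm fluid}$, $m_j^{\bB{s},k}$ and \eqref{eq:m:min:s}.

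\emph{Constraints on the total concentration.} The identity $\bB{1}^{\tt t}\widetilde{\bB{p}}_j^{\bB{c}}=\widetilde{p}_j^u$ is \eqref{eq:sum:tilde:pc:tilde:pu}. It relies on the sum property \eqref{eq:sum:property:pu:pc}, and that property holds because all the $\bB{c}$-polynomials use the $u$-smoothness indicators, so their weights coincide. This identity gives the constraint $\bB{1}^{\tt t}\bB{c}=u$ and, combined with the componentwise lower bounds, also $\widetilde{p}_j^u\ge0$. For the upper bound, if $p_j^u(z)\le u_j$ then $\widetilde{p}_j^u(z)\le u_j\le u_{\max}$. Otherwise
\[
\widetilde{p}_j^u(z)\le u_j+\theta_j^{\rm solid}(M_j^u-u_j)\le u_{\max},
\]
using $\theta_j^{\rm solid}\le (u_{\max}-u_j)/(M_j^u-u_j)$.

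The main obstacle is the bookkeeping that keeps the sum constraint intact after limiting. Because $\bB{c}$ carries an equality constraint, the limiter must use a single common $\theta_j^{\rm solid}$ for $u$ and every component of $\bB{c}$. That is why \eqref{eq:theta:c} takes a minimum over all the constraints, and the proof has to use exactly this common parameter in each of the componentwise and total bounds above. The degenerate cases with zero denominators also need the convention stated at the start.
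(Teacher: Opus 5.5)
Your proof is correct and follows essentially the same route as the paper: both use $\theta_j^{\rm solid},\theta_{k,j}^{\rm fluid}\in[0,1]$, so that the limited polynomials are convex combinations of the cell average and the unlimited polynomial, together with the bounds $\theta_j^{\rm solid}\le (u_{\max}-u_j)/(M_j^u-u_j)$, $\theta_j^{\rm solid}\le c_j^{(i)}/(c_j^{(i)}-m_j^{\bB{c},i})$ (and the substrate analogue) and the sum identity \eqref{eq:sum:tilde:pc:tilde:pu}. The only difference is cosmetic: you obtain $\widetilde p_j^u\ge 0$ from the componentwise bounds plus the sum identity, whereas the paper sums $\theta_j^{\rm solid}(c_j^{(i)}-m_j^{\bB{c},i})\le c_j^{(i)}$ over $i$ and uses \eqref{eq:sum:m:pu}; your case split also neatly avoids the zero-denominator cases, since each ratio is only invoked when it is finite.
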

\begin{proof}
Given $z\in I_j$, if $p_j^u(z)-u_j <0$, then $p_j^u(z) < u_j \leq u_{\max}$ and therefore $\widetilde{p}_j^u(z) \leq u_{\max}$, as $\widetilde{p}_j^u$ is a convex combination between $u_j$ and $p_j^u$. In addition, if $p_j^u(z)-u_j \geq 0$, we have
\begin{align*}
 \widetilde{p}_j^u(z)\leq \dfrac{u_{\max}-u_j}{M_j^u - u_j} (p_j^u(z) - u_j) + u_j\leq u_{\max}.
\end{align*}
For the lower bounds, we observe that for all $1\leq i\leq \nsp$, $\theta_j^{\rm solid}\leq c_j^{(i)}/(c_j^{(i)}-m_j^{\bB{c},i})$, which is equivalent to $\theta_j^{\rm solid}(c_j^{(i)}-m_j^{\bB{c},i})\leq c_j^{(i)}$ as $c_j^{(i)}-m_j^{\bB{c},i}\geq 0$ thanks to \eqref{eq:m:min:c}. Then, adding over $i$ and using \eqref{eq:sum:m:pu}, we get
\begin{align*}
 0 &\leq
(1-\theta_j^{\rm solid})u_j+\theta_j^{\rm solid}\sum_{i=1}^{\nsp}m_j^{\bB{c},i}
 \leq
(1-\theta_j^{\rm solid})u_j+\theta_j^{\rm solid}p_j^u = \widetilde{p}_j^u.
\end{align*}
Similarly, for the vector of solids and substrates we have
\begin{align*}
 0 \leq
(1-\theta_j^{\rm solid})c_j^{(i)} + \theta_j^{\rm solid}m_j^{\bB{c},i}
 & \leq
(1-\theta_j^{\rm solid})c_j^{(i)}+\theta_j^{\rm solid}p_j^{\bB{c},(i)} = \widetilde{p}_j^{\bB{c},(i)},\quad \text{for }i=1,\dots,\nsp,\\
 0 \leq
(1-\theta_{k,j}^{\rm fluid})s_j^{(k)} + \theta_{k,j}^{\rm fluid}m_j^{\bB{s},k}
 & \leq
(1-\theta_{k,j}^{\rm fluid})s_j^{(k)} + \theta_{k,j}^{\rm fluid}p_j^{\bB{s},(k)} = \widetilde{p}_j^{\bB{s},(k)}, \quad \text{for }k=1,\dots,\ssp,
\end{align*}
and finally, $\widetilde{p}_j^{\bB{c},(1)} + \widetilde{p}_j^{\bB{c},(2)} + \cdots + \widetilde{p}_j^{\bB{c},(\nsp)} = \widetilde{p}_j^u$ is fulfilled thanks to Equation \eqref{eq:sum:tilde:pc:tilde:pu}, concluding in this way the proof.
\end{proof}

The minimum and maximum values $\smash{m_j^{\bB{c},i}}$, $\smash{m_j^{\bB{s},k}}$ and $\smash{M_j^u}$ in \eqref{eq:theta:c} and \eqref{eq:theta:s} require evaluating the extrema of polynomials on each cell.
This inconvenience can be avoided if we define simplified limiters (as in \cite{zhang2010maximum}) that reduce the evaluations of each polynomial to a finite number of nodes of a $G$-point Legendre-Gauss-Lobatto quadrature rule on the interval $I_j = [z_{j-1/2},z_{j+1/2}]$.
This formula is exact for the integral of polynomials of degree up to $2G-3$, such as the quadrature rule described in Section~\ref{sec:component:concentration:equations}, and the main gain is that the local extrema can be computed using the polynomials at the limits $z_{j\pm1/2}$, average on $I_j$, and weights of the quadrature rule. To explain the derivation of this compact formula, we focus on $p_j^u$, and compute its minimum, denoted by $m_j^u$, and its maximum $M_j^u$. We consider the weights $ w_\alpha$ and nodes $z_j^{\alpha}$ for $\alpha=1,\dots,G$ introduced in Section~\ref{sec:component:concentration:equations} to approximate the reaction terms, and further define $\nu_{\alpha} := p_j^{u}(z_j^\alpha)$ with which we have
\begin{align}\label{eq:media-gl}
{u}_{j} \,=\, \dfrac{1}{h} \int_{I_j} p_j^{u} (z) \, \mathrm{d}z
= \sum_{\alpha=1}^{G} w_{\alpha} \nu_{\alpha}
= w_{1}	\nu_{1} + w_{G}\nu_G + \sum_{\alpha=2}^{G-1} w_{\alpha} \nu_\alpha,
\end{align}
where thanks to the mean value theorem, there exists $\xi\in I_j$ such that the last sum above can be written as
\begin{align} \label{eq:mean:value}
	\sum_{\alpha=2}^{G-1} w_{\alpha} \nu_j^\alpha =
	(1- w_1 - w_G)\sum_{\alpha=2}^{G-1} \dfrac{w_{\alpha}}{1-w_1 - w_G} p_j^{u}(z_j^\alpha) = (1- w_1 - w_G) p_j^{u} \bigl(\xi \bigr),
\end{align}
where we have used the fact the coefficients in the middle sum are such that
\begin{align*}
\sum_{\alpha=2}^{G-1}\dfrac{w_{\alpha}}{1- w_1 - w_G} = 1.
\end{align*}
Then, replacing \eqref{eq:mean:value} into \eqref{eq:media-gl}, we have
\begin{align*}
{u_{j} = w_1  p_j^{u}(z_{j-1/2}) + w_G p_j^{u}(z_{j+1/2})+ (1-w_1 - w_G)p_j^{u}(\xi)},
\end{align*}
from which we can determine the value of $p_j^{u}(\xi)$ as follows:
\begin{align*}
p_j^{u}(\xi)= \frac{1}{1 - w_1 - w_G}\Big(u_{j}-w_1  p_j^{u}(z_{j-1/2})- w_G p_j^{u}(z_{j+1/2})\Big) =: \eta_{j}^u.
\end{align*}
Note that the formula above can be computed even though $\xi$ is a priori unknown. Thus the minimum and maximum values of $p_j^{u}(z)$ over $I_j$ can be computed through the compact formulas:
\begin{align}
	m_{j}^{u}&=\min\big\{ p_j^{u}(z_{j-1/2}),\, \eta_{j}^u,\, p_j^{u}(z_{j+1/2})\big\},\qquad
	M_{j}^{u} =\max\big\{ p_j^{u}(z_{j-1/2}),\, \eta_{j}^u,\, p_j^{u}(z_{j+1/2})\big\}. \label{eq:new-min}
\end{align}
In particular, for a third-order reconstruction we employ a three-point ($G=3$) quadrature rule with weights  $w_1=1/6,  w_2=2/3,$ and $w_3 = 1/6$. In the remainder of the paper, unless otherwise specified, we will only consider the modified polynomial reconstructions including the limiters in \eqref{eq:ptilde:u}, \eqref{eq:ptilde:c} and \eqref{eq:ptilde:s}, for which we will drop the tilde, and simply denote them as $p_j^u$, $\bB{p}_j^{\bB{c}}$ and $\bB{p}_j^{\bB{s}}$ instead.

\section{Time approximation} \label{sec:time:approximation}

As pointed out in \cite{Shu1988}, a suitable time discretization for higher order approximations is a $\ell$-th total variation diminishing (TVD) Runge-Kutta method, also known as strong stability preserving Runge-Kutta (SSPRK) methods \cite{gottlieb1998total,gottlieb2001strong}. Due to convexity in the intermediate stages, in particular for the second and third-order SSPRK methods, these time discretizations preserve the positivity of numerical solutions. To begin describing the time approximation, we consider a time step $\Delta t>0$, discrete time points $t^n:=n\Delta t$ with $n\in \mathbb{N}$ and each time dependent variable and unknown evaluated at $t^n$ will be denoted with the superscript $n$, for example $u_j(t^n) = u_j^n $, etc. We employ second- and third-order SSPRK schemes for the MUSCL and CWENO3 spatial reconstructions, respectively.

\subsection{Second order time approximation}
The second-order explicit SSPRK method is given by the following Butcher tableau:
\[
\begin{array}{c|cc}
	0      &     &  \\[1ex]
	1& 1  &         \\[1ex]
	\hline\\[-2ex]
	& 1/2  & 1/2
\end{array}
\]
Then, we sequentially solve the three equations in \eqref{syst:edos} in two stages.
\begin{itemize}
	\item {\bf Stage 1:} Solve for the first intermediate variables $\mathbf{u}_{{\tt st},j}^{(1)}$, $\mathbf{c}_{{\tt st},j}^{(1)}$ and $\mathbf{s}_{{\tt st},j}^{(1)}$ for all $j=1,\dots,N$:
	\begin{align*}
		u_j^{(1)} \,&=\, u_j^{n} + \Delta t \mathcal{H}^u_j\big(\mathbf{u}_{{\tt st},j}^{n},\mathbf{c}_{{\tt st},j}^{n},\mathbf{s}_{{\tt st},j}^{n}\big)\,, \\[1ex]
		\bB{c}_j^{(1)} \,&=\, \bB{c}_j^{n} + \Delta t \mathbcal{H}^{\bB{c}}_j\big(\mathbf{u}_{{\tt st},j}^{n},\mathbf{c}_{{\tt st},j}^{n},\mathbf{s}_{{\tt st},j}^{n}\big) \,,\\[1ex]
		\bB{s}_j^{(1)} \,&=\, \bB{s}_j^{n} + \Delta t \mathbcal{H}^{\bB{s}}_j\big(\mathbf{u}_{{\tt st},j}^{n},\mathbf{c}_{{\tt st},j}^{n},\mathbf{s}_{{\tt st},j}^{n}\big) \,.
	\end{align*}

	 \item{\bf Stage 2:} Update the solution at time $t^{n+1}$ for all $j=1,\dots,N$:
	\begin{align*}
		u_j^{n+1} \,&=\, \tfrac{1}{2}u_j^{n} + \tfrac{1}{2}u_j^{(1)} + \tfrac{1}{2}\Delta t \mathcal{H}_j^{u}\big(\mathbf{u}_{{\tt st},j}^{(1)},\mathbf{c}_{{\tt st},j}^{(1)},\mathbf{s}_{{\tt st},j}^{(1)}\big)\,, \\[1ex]
		\bB{c}_j^{n+1} \,&=\, \tfrac{1}{2}\bB{c}_j^{n} + \tfrac{1}{2}\bB{c}_j^{(1)} + \tfrac{1}{2}\Delta t \mathbcal{H}_j^{\bB{c}}\big(\mathbf{u}_{{\tt st},j}^{(1)},\mathbf{c}_{{\tt st},j}^{(1)},\mathbf{s}_{{\tt st},j}^{(1)}\big)\,, \\[1ex]
		\bB{s}_j^{n+1} \,&=\, \tfrac{1}{2}\bB{s}_j^{n} + \tfrac{1}{2}\bB{s}_j^{(1)} + \tfrac{1}{2}\Delta t \mathbcal{H}_j^{\bB{s}}\big(\mathbf{u}_{{\tt st},j}^{(1)},\mathbf{c}_{{\tt st},j}^{(1)},\mathbf{s}_{{\tt st},j}^{(1)}\big)\,.
	\end{align*}
\end{itemize}
\subsection{Third order time approximation}
The third-order explicit SSPRK method is given by the following Butcher tableau:
\[
\begin{array}{c|ccc}
0      &\quad        &\quad        &\quad        \\[1ex]
1      & 1      &        &        \\[1ex]
1/2& 1/4 & 1/4 &        \\[1ex]
\hline\\[-2ex]
       & 1/6 & 1/6 & 2/3
\end{array}
\]
Then, we sequentially solve the three equations in \eqref{syst:edos} in three stages.
 \begin{itemize}
  \item {\bf Stage 1:} Solve for the first intermediate variables $\mathbf{u}_{{\tt st},j}^{(1)}$, $\mathbf{c}_{{\tt st},j}^{(1)}$ and $\mathbf{s}_{{\tt st},j}^{(1)}$ for all $j=1,\dots,N$:
\begin{align*}
 u_j^{(1)} \,&=\, u_j^{n} + \Delta t \mathcal{H}^u_j\big(\mathbf{u}_{{\tt st},j}^{n},\mathbf{c}_{{\tt st},j}^{n},\mathbf{s}_{{\tt st},j}^{n}\big)\,,\\[1ex]
  \bB{c}_j^{(1)} \,&=\, \bB{c}_j^{n} + \Delta t \mathbcal{H}^{\bB{c}}_j\big(\mathbf{u}_{{\tt st},j}^{n},\mathbf{c}_{{\tt st},j}^{n},\mathbf{s}_{{\tt st},j}^{n}\big) \,,\\[1ex]
   \bB{s}_j^{(1)} \,&=\, \bB{s}_j^{n} + \Delta t \mathbcal{H}^{\bB{s}}_j\big(\mathbf{u}_{{\tt st},j}^{n},\mathbf{c}_{{\tt st},j}^{n},\mathbf{s}_{{\tt st},j}^{n}\big) \,.
 \end{align*}
\item{\bf Stage 2:} Solve for the second intermediate variables $\mathbf{u}_{{\tt st},j}^{(2)}$, $\mathbf{c}_{{\tt st},j}^{(2)}$ and $\mathbf{s}_{{\tt st},j}^{(2)}$ for all $j=1,\dots,N$:
\begin{align*}
u_j^{(2)} \,&=\, \tfrac{3}{4}u_j^{n} + \tfrac{1}{4}u_j^{(1)} + \tfrac{1}{4}\Delta t \mathcal{H}_j^{u}\big(\mathbf{u}_{{\tt st},j}^{(1)},\mathbf{c}_{{\tt st},j}^{(1)},\mathbf{s}_{{\tt st},j}^{(1)}\big)\,,\\[1ex]
\bB{c}_j^{(2)} \,&=\, \tfrac{3}{4}\bB{c}_j^{n} + \tfrac{1}{4}\bB{c}_j^{(1)} + \tfrac{1}{4}\Delta t \mathbcal{H}_j^{\bB{c}}\big(\mathbf{u}_{{\tt st},j}^{(1)},\mathbf{c}_{{\tt st},j}^{(1)},\mathbf{s}_{{\tt st},j}^{(1)}\big)\,,\\[1ex]
\bB{s}_j^{(2)} \,&=\, \tfrac{3}{4}\bB{s}_j^{n} + \tfrac{1}{4}\bB{s}_j^{(1)} + \tfrac{1}{4}\Delta t \mathbcal{H}_j^{\bB{s}}\big(\mathbf{u}_{{\tt st},j}^{(1)},\mathbf{c}_{{\tt st},j}^{(1)},\mathbf{s}_{{\tt st},j}^{(1)}\big)\,.
\end{align*}
\item{\bf Stage 3:} Update the solution at time $t^{n+1}$ for all $j=1,\dots,N$:
\begin{align*}
 u_j^{n+1} \,&=\, \tfrac{1}{3}u_j^{n} + \tfrac{2}{3}u_j^{(1)} + \tfrac{2}{3}\Delta t \mathcal{H}_j^{u}\big(\mathbf{u}_{{\tt st},j}^{(2)},\mathbf{c}_{{\tt st},j}^{(2)},\mathbf{s}_{{\tt st},j}^{(2)}\big)\,,\\[1ex]
 \bB{c}_j^{n+1} \,&=\, \tfrac{1}{3}\bB{c}_j^{n} + \tfrac{2}{3}\bB{c}_j^{(1)} + \tfrac{2}{3}\Delta t \mathbcal{H}_j^{\bB{c}}\big(\mathbf{u}_{{\tt st},j}^{(2)},\mathbf{c}_{{\tt st},j}^{(2)},\mathbf{s}_{{\tt st},j}^{(2)}\big)\,,\\[1ex]
 \bB{s}_j^{n+1} \,&=\, \tfrac{1}{3}\bB{s}_j^{n} + \tfrac{2}{3}\bB{s}_j^{(1)} + \tfrac{2}{3}\Delta t \mathbcal{H}_j^{\bB{s}}\big(\mathbf{u}_{{\tt st},j}^{(2)},\mathbf{c}_{{\tt st},j}^{(2)},\mathbf{s}_{{\tt st},j}^{(2)}\big)\,.
\end{align*}
 \end{itemize}

\section{Invariant region property} \label{sec:invariant-region}

In this section we are interested in showing that the spatially high-order scheme \eqref{syst:edos} with MUSCL and CWENO3 reconstructions, for the case  without diffusion, namely $\mathcal{D} =\mathcal{B} = 0$, combined with a suitable time discretization as described in Section~\ref{sec:time:approximation}, satisfies an invariant-region-preserving property. For the invariant-region property, one first assumes that at a given time $t^n$, the solution $(u_j^n, \bB{c}_j^n,\bB{s}_j^n)$ belongs to $\mathbcal{W}$, and then the aim is to show that $(u_j^{n+1}, \bB{c}_j^{n+1},\bB{s}_j^{n+1})$ also belongs to $\mathbcal{W}$. The standard approach followed in the literature \cite{barajas2025invariant,barajas2026second,zhang2011maximum} is to establish an invariant-region property for the first order scheme using forward Euler approximation in time, and then using this result, state the invariant-region property for the high-order scheme.

Before studying the first-order scheme, we collect some useful bounds related to the reaction terms.
Given $(u_j^n,\bB{c}_j^n,\bB{s}_j^n)\in \mathbcal{W}$, the following inequality can be obtained as a direct consequence of Assumption~\ref{asumption:reactionterms}:
\begin{align}
\bB{1}^{\tt t}\bB{\sigma}_{\bB{c}}\bB{r}_j^n & = \sum_{k=1}^{\nsp} \sum_{i = 1}^{\nrr}\sigma_{\bB{c}}^{(k,i)} r_j^{(i),n}   \geq - \sum_{k=1}^{\nsp}
 \sum_{i \in \mathcal{J}_{\bB{c},k}^-}^{\nrr}\big(-\sigma_{\bB{c}}^{(k,i)}\big) \bar{r}_{\bB{c},j}^{(i)}c^{(k),n}_j \geq
 - \Lambda_{\bB{c},1} \mathrm{R} u^{n}_j, \label{eq:R:lowerbound:solids}\\
( \bB{\sigma}_{\bB{s}}\bB{r}_j^n)^{(k)} & = \sum_{i = 1}^{\nrr}\sigma_{\bB{s}}^{(k,i)} r_j^{(i),n}   \geq -
 \sum_{i \in \mathcal{J}_{\bB{s},k}^-}^{\nrr}\big(-\sigma_{\bB{s}}^{(k,i)}\big) \bar{r}_{\bB{s},j}^{(i)}s^{(k),n}_j \geq
 - \Lambda_{\bB{s},1} \mathrm{R} s^{(k),n}_j, \label{eq:R:lowerbound:substrates}
\end{align}
where $\Lambda_{\bB{\xi},1}\geq 0$ is defined as
\begin{align*}
 \Lambda_{\bB{\xi},1} :=\max_{k} \sum_{i\in \mathcal{J}_{\bB{\xi},k}^-} (-\sigma_{\bB{\xi}}^{(k,i)})\quad \text{ for } \bB{\xi}\in \{\bB{c},\bB{s}\}.
\end{align*}
For an upper bound of the reaction term for the solid components, we let $L_{\rm R}>0$ be the constant related to the Lipschitz continuity of each function $r^{(i)}$ for $i=1,\dots,\nrr$, see Assumption~\ref{asumption:reactionterms}. Then, using the Lipschitz continuity condition of $r^{(i)}$ and the fact that $r^{(i)}(\bB{c},\bB{s})=\bB{0}$ for $\bB{c}$ vectors such that $\bB{1}^{\tt t}\bB{c}=u_{\max}$, and  defining $\bar{\bB{c}} = \bB{c}_j^n + \tfrac{1}{\nsp}(u_{\max} - u_j^n) \bB{1}$, we get
\begin{align*}
\big|\,r^{(i)}\big(\bar{\bB{c}},\,\bB{s}_j^n\big) -  r_j^{(i),n} \,\big| = r_j^{(i),n} \leq L_{\rm R} \sum_{i=1}^{\nsp} \big|c_j^{(i),n} + \tfrac{1}{\nsp}(u_{\max} - u_j^n) - c_j^{(i),n}\big| = L_{\rm R} (u_{\max} - u_j^n),
\end{align*}
which can be used to obtain the following bound
\begin{align}\label{eq:R:upperbound}
\bB{1}^{\tt t}\bB{\sigma}_{\bB{c}}\bB{r}_j^n &
\leq  \sum_{k=1}^{\nsp}
 \sum_{i \in \mathcal{J}_{\bB{c},k}^+}^{\nrr}\sigma_{\bB{c}}^{(k,i)} r_j^{(i),n}
 \leq L_{\rm R} (u_{\max} - u_j^{n})  \sum_{k=1}^{\nsp}
 \sum_{i \in \mathcal{J}_{\bB{c},k}^+}^{\nrr}\sigma_{\bB{c}}^{(k,i)}
 \leq L_{\rm R} \Lambda_{\bB{c},2}(u_{\max} - u_j^{n}),
\end{align}
where $\Lambda_{\bB{c},2}\geq 0$ is defined as
\begin{align*}
  \Lambda_{\bB{c},2} := \sum_{k=1}^{\nsp}
 \sum_{i \in \mathcal{J}_{\bB{c},k}^+}^{\nrr}\sigma_{\bB{c}}^{(k,i)}\geq 0.
\end{align*}
In what follows, we are going to consider the Courant–Friedrichs–Lewy (CFL) condition
\begin{align}\label{eq:CFL:convective}
	\Delta t \leq \bigg(\dfrac{1}{\spacestep}\left(\dfrac{\rho\|q_{\rm f}\|_{\infty} }{\rho-u_{\max}}  + \max\big\{v_0,L_{\rm v}\big\}\right)  +  \max\big\{\Lambda_{\bB{c},1} \mathrm{R}, \Lambda_{\bB{s},1} \mathrm{R}, \Lambda_{\bB{c},2} L_{\rm R} \big\}\bigg)^{-1},
\end{align}
where we have that the ratio $\lambda:= \Delta t /h$ is bounded. Observe that to properly cover the diffusive case, with $\mathcal{D}\neq 0$, an additional term of the type $\|d\|_{\infty}/h^2$ must be included inside the parentheses on the right-hand side of \eqref{eq:CFL:convective}.

To show the invariant-region property of the first order scheme \cite{SDIMA_MOL}, we take advantage of the structure of the PDE system \eqref{syst:final}, which can be solved sequentially. Therefore, we first establish the bound $0\leq u_j^{n+1}\leq u_{\max}$, then $\bB{c}_j^{n+1}\geq \bB{0}$ and $\bB{1}^{\tt t}\bB{c}_j^{n+1} = u_j^{n+1}$, and lastly $\bB{s}_j^{n+1}$ respectively in the three lemmas that follow.
\begin{lemma} \label{lem:bounds:u}
Assume $(u_j^n,\bB{c}_j^n,\bB{s}_j^n)\in \mathbcal{W}$ for all $j=1,\dots,N$, and consider the following three points first-order scheme for the total solids concentration
\begin{align}\label{eq:first:order:u}
 u_j^{n+1} & = u_j^n -\lambda ( {\mathcal{F}}_{j+1/2}^n- {\mathcal{F}}_{j-1/2}^n)  + \lambda \delta_{j,j_{\rm f}}q_{\rm f}^n u_{\rm f}^n + \, \Delta t \,\bB{1}^{\tt t}\bB{\sigma}_{\bB{c}}\bB{r}_j^n =: \Phi_j^u(u_{j-1}^n, u_j^n,u_{j+1}^n)\,,
\end{align}
where
\begin{align} \label{eq:F:V:convective}
\mathcal{F}_{j+1/2}^n = \mathcal{V}_{j+1/2}^{n,+} {u}_j^n  + \mathcal{V}_{j+1/2}^{n,-} {u}_{j+1}^n,\quad \text{and}\quad \mathcal{V}_{j+1/2}^n = q_{j+1/2}^n + \gamma_{j+1/2} \vhs(u_{j+1}^n).
\end{align}
Then, under CFL condition \eqref{eq:CFL:convective}, there holds $0\leq u_j^{n+1}\leq u_{\max}$ for all $j=1,\dots,N$.
\end{lemma}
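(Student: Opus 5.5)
We treat the two bounds separately. In each case we write $u_j^{n+1}$ as a combination of $u_{j-1}^n,u_j^n,u_{j+1}^n$ (for the lower bound) or of $u_{\max}-u_{\ast}^n$ (for the upper bound) with nonnegative coefficients, and then use the CFL condition \eqref{eq:CFL:convective} to control the diagonal coefficient. The feed term $\lambda\delta_{j,j_{\rm f}}q_{\rm f}^n u_{\rm f}^n$ is nonnegative, so it only helps the lower bound. In the upper bound it must be absorbed using $q_{\rm f}=q_{\rm u}+q_{\rm e}$ and the jump of $q_{j+1/2}$ across $j_{\rm f}$ in \eqref{eq:def:qjhalf}, assuming $0\le u_{\rm f}\le u_{\max}$.

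\textbf{Lower bound.} Write $\mathcal{F}^n_{j+1/2}-\mathcal{F}^n_{j-1/2}$ using \eqref{eq:F:V:convective}, so that
\begin{align*}
u_j^{n+1}\ge \big(1-\lambda(\mathcal{V}^{n,+}_{j+1/2}-\mathcal{V}^{n,-}_{j-1/2})-\Delta t\,\Lambda_{\bB{c},1}\mathrm{R}\big)u_j^n-\lambda\mathcal{V}^{n,-}_{j+1/2}u_{j+1}^n+\lambda\mathcal{V}^{n,+}_{j-1/2}u_{j-1}^n .
\end{align*}
Here the reaction term has been bounded below by \eqref{eq:R:lowerbound:solids}. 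The off-diagonal coefficients are nonnegative. For the diagonal one, note that $|\mathcal{V}_{j\pm1/2}|\le \max\{q_{\rm e},q_{\rm u}\}+v_0$, and that the difference $\mathcal{V}^{+}_{j+1/2}-\mathcal{V}^{-}_{j-1/2}$ is bounded by $\|q_{\rm f}\|_\infty+v_0$, which is at most $\rho\|q_{\rm f}\|_\infty/(\rho-u_{\max})+v_0$. The CFL condition \eqref{eq:CFL:convective} then makes the diagonal coefficient nonnegative. This is monotonicity of the upwind flux, applied with the velocity frozen.

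\textbf{Upper bound.} The velocity depends on $u_{j+1}$, so we cannot freeze it; we instead use the Lipschitz bound \eqref{eq:Lipschitz:vhs}. Set $w_k=u_{\max}-u_k^n\ge0$. The cleanest route is to view $\Phi^u_j$ as a function of its three arguments and show it is nondecreasing in each. Differentiating (or differencing) the flux $\mathcal{F}_{j+1/2}$ in $u_{j+1}$ gives the term $\gamma\vhs'(u_{j+1})u_j$ plus $\mathcal{V}^-$. Because $\vhs'\le0$ and the flux enters as $-\lambda\mathcal{F}_{j+1/2}$, this yields a positive contribution to $\Phi^u_j$ for $u_{j+1}$. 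The upwind switch is continuous, so the one-sided differences are handled case by case. The derivative in $u_{j-1}$ comes from $+\lambda\mathcal{F}_{j-1/2}$ and is $\lambda\mathcal{V}^+_{j-1/2}\ge0$.

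The derivative in $u_j$ is $1-\lambda\mathcal{V}^+_{j+1/2}+\lambda\mathcal{V}^-_{j-1/2}-\lambda\gamma\vhs'(u_j)u_{j-1}\mathbbm{1}_{\{\mathcal{V}_{j-1/2}<0\}}+\Delta t\,\partial_{u_j}(\bB{1}^{\tt t}\bB{\sigma}_{\bB{c}}\bB{r})$. It is nonnegative by \eqref{eq:CFL:convective}, since $|\vhs'|u\le L_{\rm v}u_{\max}$; here $L_{\rm v}$ should be understood as already accounting for the factor $u_{\max}$. The reaction part is not differentiable in $u$, so instead we compare directly with the state $(u_{\max},u_{\max},u_{\max})$. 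At that state the flux difference is $\lambda(q_{j+1/2}-q_{j-1/2})u_{\max}$ because $\vhs(u_{\max})=0$, the reactions vanish, and $\Phi^u_j$ reduces to $u_{\max}$ after using the feed balance. The reaction contribution at the actual state is bounded above by \eqref{eq:R:upperbound}, giving $\Delta t\,L_{\rm R}\Lambda_{\bB{c},2}w_j$. This is controlled by the $\Lambda_{\bB{c},2}L_{\rm R}$ term in the CFL condition.

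Combining the monotonicity in the convective part with the reaction bound, we obtain $u_{\max}-u_j^{n+1}\ge (1-\lambda(\cdot)-\Delta t L_{\rm R}\Lambda_{\bB{c},2})w_j+(\text{nonnegative})\ge0$.

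\textbf{Main obstacle.} The hardest step is the upper bound in the feed cell $j=j_{\rm f}$ and in the cells adjacent to the tank boundaries, where $q$ and $\gamma$ jump. There the flux difference at the state $u\equiv u_{\max}$ is not zero, and it has to be cancelled exactly by the feed source, using $q_{\rm f}=q_{\rm u}+q_{\rm e}$ and $u_{\rm f}\le u_{\max}$. We will check this cell separately, splitting according to the signs of $\mathcal{V}_{j\pm1/2}$.
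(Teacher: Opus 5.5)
Your lower bound is the paper's argument. For the upper bound you take a different route from the paper. The route is workable, but one step fails as stated.

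\textbf{Comparison with the paper.} You split off the reaction term, set $\Phi^{\rm conv}_j:=\Phi^u_j-\Delta t\,\bB{1}^{\tt t}\bB{\sigma}_{\bB{c}}\bB{r}^n_j$, and prove monotonicity of $\Phi^{\rm conv}_j$. You then compare with the constant state $u\equiv u_{\max}$ and use a strict-monotonicity margin in $u_j$ to absorb \eqref{eq:R:upperbound}. The paper uses no monotonicity at all, and its conclusions stress this. It writes $u_{\max}-u_j^{n+1}$ and bounds the upwind terms directly, using $\mathcal{V}_{j\pm1/2}\ge q_{j\pm1/2}$, $u_{j\pm1}\le u_{\max}$ and the one-sided estimate $\vhs(u_j)\le L_{\rm v}(u_{\max}-u_j)$. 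It absorbs the feed with $q^+_{j+1/2}-q^-_{j-1/2}=q^+_{j-1/2}-q^-_{j+1/2}+\delta_{j,j_{\rm f}}q_{\rm f}$. That route needs only the Lipschitz bound at $u_{\max}$ and no a.e.\ differentiation or upwind case analysis. It also handles the reaction term in one line; that term depends on $\bB{c}_j,\bB{s}_j$, so $\Phi^u_j$ is not really a function of three $u$'s. Your route is more structural, but it needs the ordering described next.

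\textbf{The step that fails.} You claim $\Phi^{\rm conv}_j$ is nondecreasing in $u_j$ everywhere ``by \eqref{eq:CFL:convective}''. At a general state the $u_j$-derivative is
$1-\lambda\mathcal{V}^{+}_{j+1/2}+\lambda\mathcal{V}^{-}_{j-1/2}+\lambda\gamma_{j-1/2}\vhs'(u_j)\big(u_{j-1}\chi_{\{\mathcal{V}_{j-1/2}>0\}}+u_j\chi_{\{\mathcal{V}_{j-1/2}<0\}}\big)$.
Your sign and indicator are off; the last term is $\le 0$. The term $\mathcal{V}^{+}_{j+1/2}$ can equal $q^+_{j+1/2}+v_0$ while the last term equals $-\lambda L_{\rm v}u_{\max}$ at the same time. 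You would therefore need $\lambda(\|q_{\rm f}\|_\infty+v_0+L_{\rm v}u_{\max})\le 1$, which is a sum. The CFL condition \eqref{eq:CFL:convective} contains only a maximum, so for $q_{\rm f}=0$ this does not follow.

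The repair stays inside your approach. First raise $u_{j\pm1}$ to $u_{\max}$; monotonicity in those two arguments holds without any CFL condition. Only then integrate the $u_j$-derivative along $(u_{\max},s,u_{\max})$ for $s\in[u_j,u_{\max}]$. On that segment $\mathcal{V}_{j+1/2}=q_{j+1/2}$, so $v_0$ drops out and the derivative is at least $1-\lambda(q^+_{j+1/2}-q^-_{j-1/2}+L_{\rm v}u_{\max})$. This gives
$u_{\max}-u^{n+1}_j\ge\big(1-\lambda(\|q_{\rm f}\|_\infty+L_{\rm v}u_{\max})-\Delta t\,L_{\rm R}\Lambda_{\bB{c},2}\big)(u_{\max}-u^n_j)$,
which is the same constant the direct route yields.

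\textbf{The factor $u_{\max}$.} Your reading of $L_{\rm v}$ as $L_{\rm v}u_{\max}$ is right. The paper's proof makes the same slip when it passes from $-\vhs(u_j^n)u_{\max}$ to $-L_{\rm v}(u_{\max}-u_j^n)$. Since $v_0\le L_{\rm v}u_{\max}$, the maximum in \eqref{eq:CFL:convective} should simply be $L_{\rm v}u_{\max}$.

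\textbf{The feed cell.} Your ``main obstacle'' needs no sign splitting. At $u\equiv u_{\max}$ you get $\Phi^{\rm conv}_j=u_{\max}-\lambda\delta_{j,j_{\rm f}}q_{\rm f}(u_{\max}-u_{\rm f})\le u_{\max}$; this is an inequality, not the equality you wrote. The $\gamma$-jumps at the tank ends play no role there, because $\vhs(u_{\max})=0$. This step does require $q_{j+1/2}-q_{j-1/2}=\delta_{j,j_{\rm f}}q_{\rm f}$. That means reading \eqref{eq:def:qjhalf} with the cases $j<j_{\rm f}$ and $j\ge j_{\rm f}$, which the paper's own identity also presupposes. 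With the indices as printed, the jump falls in cell $j_{\rm f}+1$, and the state $u\equiv u_{\max}$ violates the upper bound in cell $j_{\rm f}$ whenever $q_{\rm f}u_{\rm f}>0$.
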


\begin{proof}
For the lower bound, we use the definition of $\mathcal{V}_{j\pm 1/2}^n$ in \eqref{eq:F:V:convective}, including the definition of $q_{j\pm 1/2}^n$ in \eqref{eq:def:qjhalf} and the fact that $0\leq \vhs(u) \leq v_0$ for all $0\leq u\leq u_{\max}$, the lower bound \eqref{eq:R:lowerbound:solids} and CFL condition \eqref{eq:CFL:convective} to obtain
\begin{align*}
u_j^{n+1} & = u_j^n  - \lambda\mathcal{V}_{j+1/2}^{n,+} {u}_j^n  +  \lambda\mathcal{V}_{j-1/2}^{n,-} {u}_{j}^n  - \lambda\mathcal{V}_{j+1/2}^{n,-} {u}_{j+1}^n  +  \lambda\mathcal{V}_{j-1/2}^{n,+} {u}_{j-1}^n
+ \lambda \delta_{j,j_{\rm f}}q_{\rm f}^nu_{\rm f}^n + \, \Delta t \,\bB{1}^{\tt t}\bB{\sigma}_{\bB{c}}\bB{r}_j^n\\[1ex]
& \geq u_j^n  - \lambda \big(\max\{q_{\rm u}(t)\} + v_0 + \max\{q_{\rm e}(t)\}\big) {u}_{j}^n - \, \Delta t \Lambda_{\bB{c}} \mathrm{R} u^{n}_j + \lambda \delta_{j,j_{\rm f}}q_{\rm f}^nu_{\rm f}^n \\
&\geq  \Big(1 - \lambda \big(\|q_{\rm f}\|_{\infty} + v_0\big)  - \, \Delta t \Lambda_{\bB{c}} \mathrm{R}\Big) u^{n}_j\geq 0.
\end{align*}
For the upper bound, we proceed as follows. First, we subtract $u_j^{n+1}$ to $u_{\max}$ such that we bound the following equation instead
\begin{align*}
 u_{\max} - u_j^{n+1} & = u_{\max} -  u_j^n + \lambda \big(  \mathcal{V}_{j+1/2}^{n,+} {u}_j^n - \mathcal{V}_{j-1/2}^{n,-} {u}_{j}^n  + \mathcal{V}_{j+1/2}^{n,-} {u}_{j+1}^n  -  \mathcal{V}_{j-1/2}^{n,+} {u}_{j-1}^n\big) \\ &\quad - \lambda \delta_{j,j_{\rm f}}q_{\rm f}^n u_{\rm f}^n - \, \Delta t \,\bB{1}^{\tt t}\bB{\sigma}_{\bB{c}}\bB{r}_j^n.
\end{align*}
Then, the convective terms can be lower-bounded by
\begin{align*}
&  \mathcal{V}_{j+1/2}^{n,+} {u}_j^n - \mathcal{V}_{j-1/2}^{n,-} {u}_{j}^n  + \mathcal{V}_{j+1/2}^{n,-} {u}_{j+1}^n  -  \mathcal{V}_{j-1/2}^{n,+} {u}_{j-1}^n - \delta_{j,j_{\rm f}}q_{\rm f}^n u_{\rm f}^n\\
& \qquad  \geq (q_{j+1/2}^{n,+}- q_{j-1/2}^{n,-}) u_j^n  -  (q_{j-1/2}^{n,+} - q_{j+1/2}^{n,-} + \delta_{j,j_{\rm f}} q_{\rm f}^n)u_{\max} - \vhs(u_j^n) u_{\max}
\end{align*}
where we have used that $u_{\rm f}^n \leq u_{\max}$, and the following estimates
\begin{align*}
 & \mathcal{V}_{j+1/2}^{n,+} {u}_j^n   \geq q_{j+1/2}^{n,+} u_j^n,\quad
- \mathcal{V}_{j-1/2}^{n,-} {u}_{j}^n  \geq - q_{j-1/2}^{n,-} {u}_{j}^n,\quad
 \mathcal{V}_{j+1/2}^{n,-} {u}_{j+1}^n
  \geq q_{j+1/2}^{n,-}u_{\max},\\
-&\mathcal{V}_{j-1/2}^{n,+} {u}_{j-1}^n
 \geq -q_{j-1/2}^{n,+} u_{\max} - \big(\gamma_{j-1/2}\vhs(u_j^n)\big)^{+}u_{\max}.
\end{align*}
The term related to $\vhs$ can be bounded using its Lipschitz continuity at $u= u_{\max}$ in~\eqref{eq:Lipschitz:vhs}, i.e., $\vhs(u_j^n)\leq L_{\rm v} (u_{\max} - u_j^n)$, together with $u_j^n - u_{\max} \leq 0$. Furthermore, using the following identity
\begin{align*}
\|q_{\rm f}\|_{\infty}\geq  q_{j+1/2}^{n,+}-q_{j-1/2}^{n,-} = q_{j-1/2}^{n,+}-q_{j+1/2}^{n,-} + \delta_{j,j_{\rm f}} q_{\rm f}^n
 =
 \begin{cases}
 q_{\rm e}^n &\text{ if } j < j_{\rm f} - 1,\\
 q_{\rm f}^n &\text{ if } j = j_{\rm f},\\
 q_{\rm u}^n &\text{ if } j > j_{\rm f} + 1,
 \end{cases}
\end{align*}
we have
\begin{align*}
&  \mathcal{V}_{j+1/2}^{n,+} {u}_j^n - \mathcal{V}_{j-1/2}^{n,-} {u}_{j}^n  + \mathcal{V}_{j+1/2}^{n,-} {u}_{j+1}^n  -  \mathcal{V}_{j-1/2}^{n,+} {u}_{j-1}^n - \delta_{j,j_{\rm f}}q_{\rm f}^n u_{\rm f}^n\\
& \qquad  \geq \|q_{\rm f}\|_{\infty}(u_j^n  - u_{\max}) -L_{\rm v} (u_{\max} - u_j^n).
\end{align*}
Finally, combining the above inequality with \eqref{eq:R:upperbound}, and CFL condition \eqref{eq:CFL:convective}, we get
\begin{align*}
 u_{\max} - u_j^{n+1} \geq \Big(1 - \lambda\big(\|q_{\rm f}\|_{\infty} + L_{\rm v}\big) -  \Delta t L_{\rm R} \Lambda_{\bB{c},2}\Big)(u_{\max} - u_j^{n})\geq 0,
\end{align*}
concluding with this the proof.
\end{proof}

\begin{remark}
Following the proof of Lemma~\ref{lem:bounds:u}, we can straightforwardly show that under the CFL condition \eqref{eq:CFL:convective}, there holds
\begin{align}\label{eq:bound:u:and:R}
 0\leq u_j^n  + \, \Delta t \,\bB{1}^{\tt t}\bB{\sigma}_{\bB{c}}\bB{r}_j^n\leq u_{\max}.
\end{align}
\end{remark}

\begin{lemma}\label{lem:bounds:c}
Assume $(u_j^n,\bB{c}_j^n,\bB{s}_j^n)\in \mathbcal{W}$ for all $j=1,\dots,N$, and consider the following three points first-order scheme for the vector of solid concentrations
\begin{align}\label{eq:first:order:c}
\bB{c}_j^{n+1} & =\bB{c}_j^{n} - \lambda\big(\mathbcal{G}^{\bB{c},n}_{j+1/2} - \mathbcal{G}^{\bB{c},n}_{j-1/2}\big)  + \lambda\delta_{j,j_{\rm f}}q_{\rm f}^n\bB{c}_{\rm f}^n + \Delta t\, \bB{\sigma}_{\bB{c}}\bB{r}_j^n\,=:\bB{\Phi}_j^{\bB{c}}(\bB{c}_{j-1}^n,\bB{c}_j^n,\bB{c}_{j+1}^n),
\end{align}
where
\begin{align*}
 \mathbcal{G}^{\bB{c},n}_{j+1/2} :={\rm Upw}\big(\mathcal{V}_{j+1/2}^n;\,\bB{c}_{j}^n,\,\bB{c}_{j+1}^n\big)\,.
\end{align*}
Then, under CFL condition \eqref{eq:CFL:convective}, there holds $\bB{c}^{n+1}_j\geq \bB{0}$ and
\begin{align}\label{eq:sum:c:equals:u:discrete}
 \bB{1}^{\tt t}\bB{c}_j^{n+1} = c^{(1),n+1}_j +c^{(2),n+1}_j+\cdots + c^{(\nsp),n+1}_j = u_j^{n+1},\quad\text{ for all } j =1,\dots,N.
\end{align}

\end{lemma}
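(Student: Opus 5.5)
The plan has two parts. First, the sum identity \eqref{eq:sum:c:equals:u:discrete} follows by linearity. Second, non-negativity is proved component-wise by the same upwind splitting used for the lower bound in Lemma~\ref{lem:bounds:u}.

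For the sum, I will apply $\bB{1}^{\tt t}$ to \eqref{eq:first:order:c}. Since ${\rm Upw}(a;\cdot,\cdot)$ is linear in its second and third arguments for fixed $a$, we get $\bB{1}^{\tt t}\mathbcal{G}^{\bB{c},n}_{j+1/2} = {\rm Upw}(\mathcal{V}^n_{j+1/2};\bB{1}^{\tt t}\bB{c}_j^n,\bB{1}^{\tt t}\bB{c}_{j+1}^n)$. Using $\bB{1}^{\tt t}\bB{c}_j^n=u_j^n$ for all $j$ (from $\mathbcal{W}$), this equals $\mathcal{F}^n_{j+1/2}$ in \eqref{eq:F:V:convective}. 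The key point is that the same velocity $\mathcal{V}^n_{j+1/2}$, evaluated with $u^n_{j+1}$, enters both schemes.

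For the feed term I will use $\bB{1}^{\tt t}\bB{c}_{\rm f}=u_{\rm f}$, which is the meaning of the feed total concentration. The reaction term $\Delta t\,\bB{1}^{\tt t}\bB{\sigma}_{\bB{c}}\bB{r}_j^n$ appears identically in both schemes. Hence $\bB{1}^{\tt t}\bB{c}_j^{n+1}=\Phi_j^u(u^n_{j-1},u^n_j,u^n_{j+1})=u_j^{n+1}$.

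For non-negativity, I will fix a component $k$ and expand the upwind fluxes as in the proof of Lemma~\ref{lem:bounds:u}:
\begin{align*}
c_j^{(k),n+1} = \big(1-\lambda\mathcal{V}^{n,+}_{j+1/2}+\lambda\mathcal{V}^{n,-}_{j-1/2}\big)c_j^{(k),n} - \lambda\mathcal{V}^{n,-}_{j+1/2}c_{j+1}^{(k),n} + \lambda\mathcal{V}^{n,+}_{j-1/2}c_{j-1}^{(k),n} + \lambda\delta_{j,j_{\rm f}}q_{\rm f}^nc_{\rm f}^{(k),n} + \Delta t\,(\bB{\sigma}_{\bB{c}}\bB{r}_j^n)^{(k)}.
\end{align*}
The neighbour and feed terms are non-negative because $\bB{c}^n_{j\pm1}\ge\bB{0}$ and $\bB{c}_{\rm f}\ge \bB{0}$.

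For the reaction term, I will use the $k$-th row analogue of \eqref{eq:R:lowerbound:solids}. By Assumption~\ref{asumption:reactionterms}(3), every negative-coefficient rate factors as $r^{(i)}=\bar r^{(i)}_{\bB{c}}c^{(k)}$ with $\bar r^{(i)}_{\bB{c}}\le \mathrm{R}$. Dropping the non-negative contributions from $\mathcal{J}^+_{\bB{c},k}$ then gives $(\bB{\sigma}_{\bB{c}}\bB{r}_j^n)^{(k)}\ge-\Lambda_{\bB{c},1}\mathrm{R}\,c_j^{(k),n}$.

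Combining this with $|\mathcal{V}^n_{j\pm1/2}|\le \|q_{\rm f}\|_\infty+v_0$ yields
\begin{align*}
c_j^{(k),n+1}\ge\Big(1-\lambda(\|q_{\rm f}\|_\infty+v_0)-\Delta t\,\Lambda_{\bB{c},1}\mathrm{R}\Big)c_j^{(k),n}.
\end{align*}
The bracket is non-negative by the CFL condition \eqref{eq:CFL:convective}, since $\rho/(\rho-u_{\max})\ge1$.

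The main obstacle is the velocity bound. The coefficient of $c_j^{(k),n}$ contains both $\mathcal{V}^{n,+}_{j+1/2}$ and $-\mathcal{V}^{n,-}_{j-1/2}$, and these must not add up to more than $\|q_{\rm f}\|_\infty+v_0$. I will handle this by the case analysis on $j$ relative to $j_{\rm f}$ from Lemma~\ref{lem:bounds:u}, using the definition \eqref{eq:def:qjhalf}. Away from the feed cell, $q$ has a single sign on both interfaces, so only one of the two terms is active. At the feed cell both can be active, but their sum is $q_{\rm u}+q_{\rm e}=q_{\rm f}$ plus at most $v_0$, which still respects the bound.

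A secondary point is that the reaction bound needs $(\bB{c}_j^n,\bB{s}_j^n)$ in $\mathbcal{W}$, which holds by hypothesis.
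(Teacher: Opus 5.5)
Your proposal is correct and follows essentially the same route as the paper: you expand the upwind fluxes component-wise, bound the reaction term below by $-\Lambda_{\bB{c},1}\mathrm{R}\,c_j^{(k),n}$ via the factorization in Assumption~\ref{asumption:reactionterms}, close with the CFL condition \eqref{eq:CFL:convective}, and get $\bB{1}^{\tt t}\bB{c}_j^{n+1}=u_j^{n+1}$ from linearity of ${\rm Upw}$ in its state arguments together with $\bB{1}^{\tt t}\bB{c}_{\rm f}=u_{\rm f}$. Your explicit check that $q^{+}_{j+1/2}-q^{-}_{j-1/2}\le q_{\rm f}^n\le\|q_{\rm f}\|_\infty$ in every cell (both terms are active only in the cell where $q_{j\pm1/2}$ changes sign, and there the sum is $q_{\rm u}^n+q_{\rm e}^n$) makes rigorous what the paper only asserts by reference to Lemma~\ref{lem:bounds:u}.
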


\begin{proof}
For the lower bound, we follow the same bounds as in the proof of Lemma~\ref{lem:bounds:u}, such that for $i=1,\dots,\nsp$, we have
\begin{align*}
c_j^{(i),n+1} & = c_j^{(i),n}  - \lambda\mathcal{V}_{j+1/2}^{n,+} {c}_j^{(i),n}  +  \lambda\mathcal{V}_{j-1/2}^{n,-} {c}_{j}^{(i),n}  - \lambda\mathcal{V}_{j+1/2}^{n,-} {c}_{j+1}^{(i),n}  +  \lambda\mathcal{V}_{j-1/2}^{n,+} {c}_{j-1}^{(i),n}\\
&\quad + \lambda \delta_{j,j_{\rm f}}q_{\rm f}^n c_{\rm f}^{(i),n} + \, \Delta t \,(\bB{\sigma}_{\bB{c}}\bB{r}_j^n)^{(i)}\\[1ex]
&\geq  \Big(1 - \lambda \big(\|q_{\rm f}\|_{\infty} + v_0\big)  - \, \Delta t \Lambda_{\bB{c}} \mathrm{R}\Big) c^{(i),n}_j\geq 0,
\end{align*}
where the last inequality is a consequence of the CFL condition \eqref{eq:CFL:convective}.
Note that the bound for the reaction term can be relaxed, but we keep $\Lambda_{\bB{c}}$ as this is already needed in the CFL condition due to Lemma~\ref{lem:bounds:u}. Finally, Equation \eqref{eq:sum:c:equals:u:discrete} is trivially fulfilled as each term on $\bB{\Phi}_j^{\bB{c}}$ adds up the corresponding term in \eqref{eq:first:order:u}.
\end{proof}

\begin{lemma} \label{lem:bounds:s}
Assume $(u_j^n,\bB{c}_j^n,\bB{s}_j^n)\in \mathbcal{W}$ for all $j=1,\dots,N$, and consider the following three points first-order scheme for the vector of substrates
\begin{align}\label{eq:first:order:s}
\bB{s}_j^{n+1} & =\bB{s}_j^{n} - \lambda\big(\mathbcal{G}^{\bB{s},n}_{j+1/2} - \mathbcal{G}^{\bB{s},n}_{j-1/2}\big)  + \lambda \delta_{j,j_{\rm f}}q_{\rm f}^n\bB{s}_{\rm f}^n + \Delta t\, \bB{\sigma}_{\bB{s}}\bB{r}_j^n\,=:\bB{\Phi}_j^{\bB{s}}(\bB{s}_{j-1}^n,\bB{s}_j^n,\bB{s}_{j+1}^n),
\end{align}
where
\begin{align*}
 \mathbcal{G}^{\bB{s},n}_{j+1/2} = {\rm Upw}
 \Biggl( \rho q_{j+1/2}^n - \mathcal{F}_{j+1/2}^n;\,
 \dfrac{\bB{s}_{j}^n}{\rho - u_{j}^n},\,
 \dfrac{\bB{s}_{j+1}^n}{\rho - u_{j+1}^n}\Biggr)\,.
\end{align*}
Then, under CFL condition \eqref{eq:CFL:convective}, there holds $\bB{s}_j^{n+1}\geq \bB{0}$ for all $j=1,\dots,N$.
\end{lemma}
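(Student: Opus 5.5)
We follow the same strategy as in the proofs of Lemma~\ref{lem:bounds:u} and Lemma~\ref{lem:bounds:c}: expand the upwind fluxes and write $s_j^{(k),n+1}$ as a sum of nonnegative terms plus a multiple of $s_j^{(k),n}$ whose coefficient is nonnegative under \eqref{eq:CFL:convective}. Fix $k$ and set
\begin{align*}
 W_{j+1/2}^n:=\rho q_{j+1/2}^n-\mathcal{F}_{j+1/2}^n,\qquad \omega_j^{(k),n}:=\frac{s_j^{(k),n}}{\rho-u_j^n}\geq 0,
\end{align*}
where nonnegativity uses $u_j^n\leq u_{\max}<\rho$. By definition of ${\rm Upw}$ we then have
\begin{align*}
 s_j^{(k),n+1}&=s_j^{(k),n}-\lambda\big(W_{j+1/2}^{n,+}-W_{j-1/2}^{n,-}\big)\omega_j^{(k),n}-\lambda W_{j+1/2}^{n,-}\omega_{j+1}^{(k),n}+\lambda W_{j-1/2}^{n,+}\omega_{j-1}^{(k),n}\\
 &\quad+\lambda\delta_{j,j_{\rm f}}q_{\rm f}^n s_{\rm f}^{(k),n}+\Delta t(\bB{\sigma}_{\bB{s}}\bB{r}_j^n)^{(k)}.
\end{align*}
The neighbour terms and the feed term are nonnegative. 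The reaction term is bounded below by $-\Delta t\Lambda_{\bB{s},1}\mathrm{R}\,s_j^{(k),n}$ via \eqref{eq:R:lowerbound:substrates}.

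It therefore remains to show
\begin{align*}
 \frac{W_{j+1/2}^{n,+}-W_{j-1/2}^{n,-}}{\rho-u_j^n}\leq \frac{\rho\|q_{\rm f}\|_\infty}{\rho-u_{\max}}+\max\{v_0,L_{\rm v}\}.
\end{align*}
Then the coefficient of $s_j^{(k),n}$ is at least $1-\lambda\big(\tfrac{\rho\|q_{\rm f}\|_\infty}{\rho-u_{\max}}+\max\{v_0,L_{\rm v}\}\big)-\Delta t\Lambda_{\bB{s},1}\mathrm{R}$, which is nonnegative by \eqref{eq:CFL:convective}.

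The bound on the velocities is the main step. Using $\mathcal{F}^n_{j+1/2}\geq 0$ when $q\geq 0$ is not automatic, so I would bound directly. First, $|\mathcal{F}_{j+1/2}^n|\leq (|q_{j+1/2}^n|+v_0)u_{\max}$. Hence $W^{+}_{j+1/2}\leq \rho|q_{j+1/2}^n|+|q^n_{j+1/2}|u_{\max}+\ldots$. A cleaner route is to split into cases on the sign of $q$. For $j>j_{\rm f}$ we have $q=q_{\rm u}\geq 0$, so $\mathcal{F}\geq0$ and $W\leq \rho q_{\rm u}$. For $j\leq j_{\rm f}$ we have $q=-q_{\rm e}$, and $W=-\rho q_{\rm e}-\mathcal{F}$. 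Using $\mathcal{F}^n_{j+1/2}\geq -q_{\rm e}u_{\max}$, we get $W\leq -(\rho-u_{\max})q_{\rm e}\leq0$, so $W^+=0$. Similarly, $-W^-$ is at most $\rho q_{\rm e}+\gamma\vhs u_{j}$-type terms.

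Dividing by $\rho-u_j^n\geq\rho-u_{\max}$ gives the $\rho\|q_{\rm f}\|_\infty/(\rho-u_{\max})$ contribution. The settling contribution $\gamma\vhs(u_{j\pm1}^n)u^n/(\rho-u_j^n)$ is the delicate part, since it must be absorbed into $\max\{v_0,L_{\rm v}\}$ without an extra $\rho/(\rho-u_{\max})$ factor. I expect this to be the obstacle. Here I would use the upwind structure: when $\mathcal{F}^n_{j-1/2}$ or $\mathcal{F}^n_{j+1/2}$ carries the settling flux out of cell $j$ in the liquid equation, it involves $u_j^n$ (or $u_{j+1}^n$ combined with $\vhs(u_{j+1}^n)$). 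I would then try the Lipschitz bound $\vhs(u)\leq L_{\rm v}(u_{\max}-u)\leq L_{\rm v}(\rho-u)$ to cancel the denominator, yielding a contribution of at most $L_{\rm v}\,u_{\max}\leq$ const. If an extra constant factor appears, I would fold it into the CFL constant. The case $j=j_{\rm f}$, where $q$ changes sign across the cell, would be checked separately using $q_{\rm u}+q_{\rm e}=q_{\rm f}$.
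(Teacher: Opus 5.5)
Your reduction is the same as the paper's. You expand ${\rm Upw}$, keep the nonnegative neighbour and feed terms, use \eqref{eq:R:lowerbound:substrates}, and reduce everything to an upper bound for $(W^{n,+}_{j+1/2}-W^{n,-}_{j-1/2})/(\rho-u^n_j)$. The pieces of the case split you do carry out are correct.

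\textbf{The gap.} You never establish that bound. The term $-W^{n,-}_{j-1/2}$ is only described as ``$\rho q_{\rm e}+\gamma\vhs u_j$-type terms''. The cell where $q$ changes sign is postponed. The settling contribution ends with ``fold it into the CFL constant'', which changes the hypothesis instead of proving the lemma.

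\textbf{The case split is unnecessary.} The paper uses two bounds valid at every interface. Since $\mathcal{V}^n_{j\pm1/2}\geq q^n_{j\pm1/2}$, you have $\mathcal{V}^{n,-}_{j+1/2}\geq q^{n,-}_{j+1/2}$ and $\mathcal{V}^{n,+}_{j-1/2}\leq q^{n,+}_{j-1/2}+\vhs(u^n_j)$. Dropping $-\mathcal{V}^{n,+}_{j+1/2}u^n_j\leq 0$ and $-\mathcal{V}^{n,-}_{j-1/2}u^n_j\geq 0$ respectively gives
\[
W^{n,+}_{j+1/2}\leq\bigl(\rho q^{n,+}_{j+1/2}+q^{n,-}_{j+1/2}(\rho-u^n_{j+1})\bigr)^{+}\leq\rho q^{n,+}_{j+1/2},\qquad -W^{n,-}_{j-1/2}\leq-\rho q^{n,-}_{j-1/2}+\vhs(u^n_j)\,u^n_{j-1}.
\]
Since $q^{n,+}_{j+1/2}-q^{n,-}_{j-1/2}\leq q^n_{\rm u}+q^n_{\rm e}\leq\|q_{\rm f}\|_\infty$ in every cell, including the one where $q$ changes sign, this yields
\[
\frac{W^{n,+}_{j+1/2}-W^{n,-}_{j-1/2}}{\rho-u^n_j}\leq\frac{\rho\|q_{\rm f}\|_\infty}{\rho-u_{\max}}+\frac{\vhs(u^n_j)\,u^n_{j-1}}{\rho-u^n_j},
\]
with no separate treatment of any cell.

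\textbf{The remaining settling term.} What is left is exactly the term you flagged, and your concern is justified. The paper bounds $\vhs(u^n_j)u^n_{j-1}\leq\rho v_0$ and then replaces the resulting $\rho v_0/(\rho-u^n_j)$ by $v_0$. That step goes the wrong way, since $\rho/(\rho-u^n_j)\geq1$. Read strictly, the paper's chain proves the lemma only with $\rho(\|q_{\rm f}\|_\infty+v_0)/(\rho-u_{\max})$ in the CFL condition. There are two ways to stay within \eqref{eq:CFL:convective}:
\begin{itemize}
\item Use $\vhs(u^n_j)u^n_{j-1}\leq v_0u_{\max}$. This gives $u_{\max}v_0/(\rho-u_{\max})\leq v_0$ provided $u_{\max}\leq\rho/2$, which holds in the applications ($u_{\max}=30$, $\rho=1050$).
\item Use your Lipschitz route, which gives $L_{\rm v}u_{\max}$. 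This fits only if the $L_{\rm v}$ in the CFL is read as $L_{\rm v}u_{\max}$, which is what the upper bound in Lemma~\ref{lem:bounds:u} also tacitly needs.
\end{itemize}

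\textbf{Some extra condition is unavoidable.} Take $q\equiv0$, no reactions, and $\bB{s}^n_{j\pm1}=\bB{0}$. The scheme then gives exactly
\[
s^{(k),n+1}_j=s^{(k),n}_j\bigl(1-\lambda\,\vhs(u^n_j)u^n_{j-1}/(\rho-u^n_j)\bigr).
\]
Choose $\rho=110$, $u^n_{j-1}=u_{\max}=100$, $u^n_j=90$, and $\vhs(u)=v_0\bigl(1-(u/u_{\max})^{20}\bigr)$, so that numerically $L_{\rm v}=0.2\,v_0<v_0$. At the CFL limit $\lambda=1/v_0$ the bracket is about $-3.4$, so $s^{(k),n+1}_j<0$.

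So your proposal is incomplete as a proof. The missing step cannot be closed by a sharper estimate alone: it needs an added hypothesis such as $u_{\max}\leq\rho/2$, or a modified CFL condition.
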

\begin{proof}
Taking into account the positive and negative parts from the definition of $\mathbcal{G}^{\bB{s},n}_{j\pm 1/2}$, for $i=1,\dots,\ssp$, we have
\begin{align*}
s_j^{(i),n+1} & \geq  s_j^{(i),n}
- \lambda\dfrac{{s}_j^{(i),n}}{\rho - u_j^n}\Big( \big( \rho q_{j+1/2}^n - \mathcal{F}_{j+1/2}^n\big)^{+}  -  \lambda\big( \rho q_{j-1/2}^n - \mathcal{F}_{j-1/2}^n\big)^{-} \Big) + \, \Delta t \,(\bB{\sigma}_{\bB{s}}\bB{r}_j^n)^{(i)},
\end{align*}
where each convective term $\big( \rho q_{j\pm 1/2}^n - \mathcal{F}_{j\pm1/2}^n\big)^{\pm}$ can be lower-bounded using $u_j^n\leq u_{\max}\leq \rho$ and $0\leq \vhs(u_j^n)\leq v_0$ as follows
\begin{align*}
 \big( \rho q_{j+1/2}^n - \mathcal{F}_{j+1/2}^n\big)^{+} & = \Big(\rho q_{j+1/2}^n - \mathcal{V}_{j+1/2}^{n,+} {u}_j^n  - \mathcal{V}_{j+1/2}^{n,-} {u}_{j+1}^n\Big)^{+} \leq \Big(\rho q_{j+1/2}^n - \mathcal{V}_{j+1/2}^{n,-} {u}_{j+1}^n\Big)^{+} \\
  & \leq \big(\rho q_{j+1/2}^n - q_{j+1/2}^{n,-} {u}_{j+1}^n)\big)^{+} \leq \big(q_{j+1/2}^{n,-}(\rho- {u}_{j+1}^n) + \rho q_{j+1/2}^{n,+}\big)^{+}\\
  &\leq \rho q_{j+1/2}^{n,+},
\end{align*}
and
\begin{align*}
 \big( \rho q_{j-1/2}^n - \mathcal{F}_{j-1/2}^n\big)^{-} & = \Big(\rho q_{j-1/2}^n - \mathcal{V}_{j-1/2}^{n,+} {u}_{j-1}^n  - \mathcal{V}_{j-1/2}^{n,-} {u}_{j}^n\Big)^{-} \geq \Big(\rho q_{j-1/2}^n - \mathcal{V}_{j-1/2}^{n,+} {u}_{j-1}^n\Big)^{+} \\
  & \geq \big(\rho q_{j-1/2}^n - q_{j-1/2}^{n,+} {u}_{j-1}^n - \vhs(u_j^n) {u}_{j-1}^n)\big)^{-} \\
  & \geq \big(q_{j-1/2}^{n,+}(\rho- {u}_{j-1}^n) + \rho q_{j-1/2}^{n,-}- \vhs(u_j^n) {u}_{j-1}^n\big)^{-}\\
  &\geq \rho q_{j-1/2}^{n,-}- \vhs(u_j^n) {u}_{j-1}^n \geq \rho( q_{j-1/2}^{n,-}- v_0 ).
\end{align*}
Finally, using $ \rho/(\rho-u_{\max})\geq  \rho/(\rho-u_j^n)$ and the lower estimate of the reaction term \eqref{eq:R:lowerbound:substrates}, we have
\begin{align*}
 s_j^{(i),n+1} & \geq  s_j^{(i),n}
- \lambda\dfrac{\rho{s}_j^{(i),n}}{\rho - u_j^n}\Big(q_{j+1/2}^{n,+} - q_{j-1/2}^{n,-} + v_0\Big)  - \, \Delta t \Lambda_{\bB{s}} \mathrm{R} s^{(i),n}_j\\
 & \geq  s_j^{(i),n}
- \lambda {s}_j^{(i),n}\left(\dfrac{\rho\|q_{\rm f}\|_{\infty} }{\rho-u_{\max}}  + v_0\right)  - \, \Delta t \Lambda_{\bB{s}} \mathrm{R} s^{(i),n}_j\\
&= \left(1- \lambda\left(\dfrac{\rho\|q_{\rm f}\|_{\infty} }{\rho-u_{\max}} + v_0\right)  - \, \Delta t \Lambda_{\bB{s}} \mathrm{R}\right) s^{(i),n}_j,
\end{align*}
which together with the CFL condition \eqref{eq:CFL:convective}, implies the desired result $s_j^{(i),n+1}\geq 0$.
\end{proof}

Combining Lemmas~\ref{lem:bounds:u}, \ref{lem:bounds:c} and \ref{lem:bounds:s}, we can conclude that the first-order scheme, given by equations \eqref{eq:first:order:u}, \eqref{eq:first:order:c} and \eqref{eq:first:order:s}, is invariant-region-preserving over the set $\mathbcal{W}$ for every time iteration $t^n$ provided the CFL condition \eqref{eq:CFL:convective} holds. The next theorem states the invariant-region property for the MUSCL scheme (Section~\ref{sec:MUSCL}) and CWENO3 including limiters (Section~\ref{sec:CWENO}), respectively, combined with forward Euler time approximations.

\begin{theorem} \label{thm:invariant-region:WENO}
Assume $(u_j^n,\bB{c}_j^n,\bB{s}_j^n)\in \mathbcal{W}$ for all $j=1,\dots,N$, and that the CFL condition \eqref{eq:CFL:convective} holds with $\lambda/w_1$ instead of $\lambda$. Consider the full discretization of \eqref{syst:edos}, with MUSCL or CWENO3 reconstructions including limiters, and forward Euler in time, given by
\begin{subequations}
\begin{align}
 u_j^{n+1} & = u_j^{n} + \Delta t  \mathcal{H}^{u}_j(\mathbf{u}_{{\tt st},j}^{n},\mathbf{c}_{{\tt st},j}^{n},\mathbf{s}_{{\tt st},j}^{n}),  \label{eq:update:weno:u}\\
 \bB{c}_j^{n+1} & = \bB{c}_j^{n} + \Delta t \mathcal{H}^{\bB{c}}_j(\mathbf{u}_{{\tt st},j}^{n},\mathbf{c}_{{\tt st},j}^{n},\mathbf{s}_{{\tt st},j}^{n}),\label{eq:update:weno:c}\\
 \bB{s}_j^{n+1} & = \bB{s}_j^{n} +  \Delta t \mathcal{H}^{\bB{s}}_j(\mathbf{u}_{{\tt st},j}^{n},\mathbf{c}_{{\tt st},j}^{n},\mathbf{s}_{{\tt st},j}^{n}), \label{eq:update:weno:s}
 \end{align}
 \end{subequations}
 for $j=1,\dots,N$, where $\mathbf{u}_{{\tt st},j}^{n}$, $\mathbf{c}_{{\tt st},j}^{n}$ and $\mathbf{s}_{{\tt st},j}^{n}$ are the vectors defined in \eqref{eq:stencil:vectors}. Then, there holds $(u_j^{n+1},\bB{c}_j^{n+1},\bB{s}_j^{n+1})\in \mathbcal{W}$.
\end{theorem}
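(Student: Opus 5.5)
We follow the standard Zhang--Shu decomposition: split the cell average into Gauss--Lobatto nodal values of the limited reconstruction, then write the high-order update as a convex combination of first-order-type updates of the kind treated in Lemmas~\ref{lem:bounds:u}, \ref{lem:bounds:c} and \ref{lem:bounds:s}.

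\textbf{Step 1 (decomposition of the averages).} Since $\mathcal{B}=0$, we have $\mathcal{E}_{j+1/2}=0$. By the cell-average property and exactness of the $G$-point rule (with $G=2$, i.e.\ trapezoidal, for MUSCL, and $G=3$ for CWENO3), we write
\begin{align*}
u_j^n=w_1\widehat u^{\rm R}_{j-1/2}+w_G\widehat u^{\rm L}_{j+1/2}+\sum_{\alpha=2}^{G-1}w_\alpha p_j^u(z_j^\alpha),
\end{align*}
and analogously for $\bB{c}_j^n$ and $\bB{s}_j^n$. We also split the reaction quadrature $\Delta t\sum_\alpha w_\alpha\bB\sigma\bB r(\cdot)$ node by node. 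By Lemmas~\ref{lem:IRP:poly:muscl:u:c:s} and \ref{lem:IRP:poly:weno:u:c:s}, every nodal triple $(p_j^u,\bB p_j^{\bB c},\bB p_j^{\bB s})(z_j^\alpha)$ lies in $\mathbcal{W}$.

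\textbf{Step 2 (regrouping into convex pieces).} We rewrite
\begin{align*}
u_j^{n+1}=\sum_{\alpha=2}^{G-1}w_\alpha\big(\nu_\alpha+\Delta t\,\bB1^{\tt t}\bB\sigma_{\bB c}\bB r(\cdot)\big)+w_1\big(\widehat u^{\rm R}_{j-1/2}+\tfrac{\Delta t}{w_1}(\dots)\big)+w_G\big(\widehat u^{\rm L}_{j+1/2}+\tfrac{\Delta t}{w_G}(\dots)\big).
\end{align*}
Here the interior nodes carry only their reaction terms. By the remark after Lemma~\ref{lem:bounds:u} (inequality \eqref{eq:bound:u:and:R}) applied to a point of $\mathbcal{W}$, each interior piece lies in $[0,u_{\max}]$. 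The flux differences $\widehat{\mathcal F}_{j+1/2}-\widehat{\mathcal F}_{j-1/2}$, together with the feed term, are distributed to the two endpoint pieces. More precisely, we add and subtract the fictitious flux $\mathcal F(\widehat u^{\rm R}_{j-1/2},\widehat u^{\rm L}_{j+1/2})$ evaluated inside the cell. Each endpoint piece then becomes a first-order three-point update $\Phi^u$ of Lemma~\ref{lem:bounds:u}, with effective ratio $\lambda/w_1$ (note $w_1=w_G$) acting on the states $(\widehat u^{\rm L}_{j-1/2},\widehat u^{\rm R}_{j-1/2},\widehat u^{\rm L}_{j+1/2})$ or $(\widehat u^{\rm R}_{j-1/2},\widehat u^{\rm L}_{j+1/2},\widehat u^{\rm R}_{j+1/2})$. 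All of these states belong to $\mathbcal{W}$. The hypothesized CFL with $\lambda/w_1$ then gives bounds for each piece, and the convex combination inherits $0\le u_j^{n+1}\le u_{\max}$.

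\textbf{Step 3 (components).} The identical regrouping is applied to $\bB c$ and $\bB s$, using Lemmas~\ref{lem:bounds:c} and \ref{lem:bounds:s} for the endpoint pieces and \eqref{eq:R:lowerbound:solids}--\eqref{eq:R:lowerbound:substrates} for the interior reaction pieces. Because the same upwind velocity $\widehat{\mathcal V}_{j+1/2}$ multiplies both $\widehat u$ and $\widehat{\bB c}$, and $\bB1^{\tt t}\bB p_j^{\bB c}=p_j^u$ by \eqref{eq:sum:tilde:pc:tilde:pu}, summing the $\bB c$-update gives exactly the $u$-update, so $\bB1^{\tt t}\bB c_j^{n+1}=u_j^{n+1}$. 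For $\bB s$, the velocity is $\rho q-\widehat{\mathcal F}$ and the states are $\widehat{\bB s}/(\rho-\widehat u)$, matching the structure of Lemma~\ref{lem:bounds:s}.

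\textbf{Main obstacle.} The hard part is the mismatch of velocity arguments. In Lemma~\ref{lem:bounds:u} the velocity at $j\pm1/2$ is evaluated at the right neighbouring cell value, whereas the endpoint-piece decomposition introduces the fictitious interior flux, whose velocity uses $\widehat u^{\rm L}_{j+1/2}$ and is not the actual interface velocity. We therefore plan to argue directly, not by black-box citation: we redo the sign estimates of those lemmas on each endpoint piece. For the upper bound we use the Lipschitz bound $\vhs(u)\le L_{\rm v}(u_{\max}-u)$ applied to the reconstructed value; for $\bB s$ we use $(\rho q-\widehat{\mathcal F})^\pm$ bounds with $\widehat u\le u_{\max}<\rho$. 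This is where the factor $1/w_1$ in the CFL condition is consumed.
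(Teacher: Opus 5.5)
Your proposal is correct and follows the paper's proof essentially step for step. It uses the Gauss--Lobatto splitting of the averages with $w_1=w_G$, adds and subtracts the interior flux $\mathcal{F}(\widehat u^{\rm R}_{j-1/2},\widehat u^{\rm L}_{j+1/2})$, recognises the two endpoint pieces as the first-order updates $\Phi^u_j$, $\bB{\Phi}^{\bB{c}}_j$, $\bB{\Phi}^{\bB{s}}_j$ with ratio $\lambda/w_1$ on reconstructed states, and controls the interior nodes by \eqref{eq:bound:u:and:R} and the reaction lower bounds. The velocity mismatch you flag as the main obstacle does not arise: in each endpoint piece every flux, including the fictitious one, evaluates $\vhs$ at the state to the right of its interface, exactly as $\mathcal{V}^n_{j\pm1/2}$ does in Lemma~\ref{lem:bounds:u}, so the paper simply cites Lemmas~\ref{lem:bounds:u}--\ref{lem:bounds:s} for those pieces instead of redoing the sign estimates.
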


\begin{proof}
We begin by recalling that since $(u_j^n,\bB{c}_j^n,\bB{s}_j^n)\in \mathbcal{W}$, thanks to Lemma~\ref{lem:IRP:poly:muscl:u:c:s} in the case of MUSCL scheme or Lemma~\ref{lem:IRP:poly:weno:u:c:s} in the case of CWENO3 including limiters, the polynomial reconstructions $(p_j^u,\bB{p}_j^{\bB{c}},\bB{p}^{\bB{s}}_j)$ belong to $\mathbcal{W}$ for all $j=1,\dots,N$. Furthermore, the following identities hold
\begin{align}\label{eq:property:sums:p:alpha}
u_j^n = \sum_{\alpha =1}^{G} w_\alpha p_j^{u,n} (z_j^\alpha),\quad \bB{c}_j^n = \sum_{\alpha =1}^{G} w_\alpha \bB{p}_j^{\bB{c},n}(z_j^\alpha), \quad  \bB{s}_j^n = \sum_{\alpha = 1}^{G} w_\alpha \bB{p}_j^{\bB{s},n}(z_j^\alpha),
\end{align}
with $z_j^\alpha$ and $ w_j^\alpha$, the nodes and quadrature weights defined in Section~\ref{sec:total:concentration:equations}. Without loss of generality, we assume $w_1 = w_G$, and introduce the following short-hand notation for the reaction terms
\begin{align*}
 \widehat{\bB{r}}_{j-1/2}^{n,{\rm R}} =  \bB{\sigma}_{\bB{c}}\bB{r}\big(\widehat{\bB{c}}_{j-1/2}^{n,R},\widehat{\bB{s}}_{j-1/2}^{n,R}\big),\quad
 \widehat{\bB{r}}_{j+1/2}^{n,{\rm L}} = \bB{\sigma}_{\bB{c}}\bB{r}\big(\widehat{\bB{c}}_{j+1/2}^{n,L},\widehat{\bB{s}}_{j+1/2}^{n,L}\big),\quad
 \bB{r}_{j}^{n,\alpha} = \bB{r}\big(\bB{p}^{\bB{c},n}_j(z_j^\alpha),\bB{p}^{\bB{s},n}_j(z_j^\alpha)\big),
\end{align*}
for all $\alpha=1,\dots,G$. Then, we first show the bounds related to $u_j^{n+1}$, for which we use \eqref{eq:property:sums:p:alpha},  add and subtract $\lambda\mathcal{F}\big(\widehat{u}_{j-1/2}^{n,R},\widehat{u}_{j+1/2}^{n,L}\big) $ and reorganise the resulting terms in a convex combination as follows:
\begin{align*}
u_j^{n+1}
& =\sum_{\alpha=2}^{G-1} w_\alpha p_j^{u,n} (z_j^\alpha) + w_1\widehat{u}_{j-1/2}^{n,\rm R} + w_1 \widehat{u}_{j+1/2}^{n,\rm L} - \lambda\Big(\mathcal{F}(\widehat{u}_{j+1/2}^{n,L},\widehat{u}_{j+1/2}^{n,R}) - \mathcal{F}(\widehat{u}_{j-1/2}^{n,L},\widehat{u}_{j-1/2}^{n,R})\Big)\\
& \quad
+ \lambda \delta_{j,j_{\rm f}}q_{\rm f}^nu_{\rm f}^n
+ \Delta tw_1 \,\bB{1}^{\tt t}\bB{\sigma}_{\bB{c}}\widehat{\bB{r}}_{j-1/2}^{n,R}
+ \Delta tw_1 \,\bB{1}^{\tt t}\bB{\sigma}_{\bB{c}}\widehat{\bB{r}}_{j+1/2}^{n,L}
+ \Delta t\sum_{\alpha=2}^{G-1}\, w_{\alpha} \,\bB{1}^{\tt t}\bB{\sigma}_{\bB{c}}\bB{r}_{j}^{n,\alpha}\\
& = w_1 \Big(\widehat{u}_{j+1/2}^{n,\rm L}
 - \dfrac{\lambda}{w_1}\big(
 \mathcal{F}(\widehat{u}_{j+1/2}^{n,L},\widehat{u}_{j+1/2}^{n,R}) -
 \mathcal{F}(\widehat{u}_{j-1/2}^{n,R},\widehat{u}_{j+1/2}^{n,L})\big)
 + \dfrac{\lambda}{w_1} \delta_{j,j_{\rm f}}q_{\rm f}^n(\tfrac{1}{2}u_{\rm f}^n)
 + \Delta t\,\bB{1}^{\tt t}\bB{\sigma}_{\bB{c}}\widehat{\bB{r}}_{j+1/2}^{n,L}
\\
&\quad +
\widehat{u}_{j-1/2}^{n,\rm R} - \dfrac{\lambda}{w_1}\big(
\mathcal{F}(\widehat{u}_{j-1/2}^{n,R},\widehat{u}_{j+1/2}^{n,L}) -
\mathcal{F}(\widehat{u}_{j-1/2}^{n,L},\widehat{u}_{j-1/2}^{n,R})\big)
+\dfrac{\lambda}{w_1} \delta_{j,j_{\rm f}}q_{\rm f}^n(\tfrac{1}{2}u_{\rm f}^n)
+ \Delta t\,\bB{1}^{\tt t}\bB{\sigma}_{\bB{c}}\widehat{\bB{r}}_{j-1/2}^{n,R} \Big)\\
 &\quad
+ \sum_{\alpha=2}^{G-1} w_\alpha \big( p_j^{u,n} (z_j^\alpha) + \Delta t \,\bB{1}^{\tt t}\bB{\sigma}_{\bB{c}}\bB{r}_{j}^{n,\alpha}\big)\\
&=
w_1 \Phi_j^u\big(\widehat{u}_{j-1/2}^{n,R},\widehat{u}_{j+1/2}^{n,L},\widehat{u}_{j+1/2}^{n,R}\big) +
\sum_{\alpha=2}^{G-1} w_\alpha \big( p_j^{u,n} (z_j^\alpha) + \Delta t \,\bB{1}^{\tt t}\bB{\sigma}_{\bB{c}}\bB{r}_{j}^{n,\alpha}\big)\\
&\quad + w_G \Phi_j^u\big(\widehat{u}_{j-1/2}^{n,L},\widehat{u}_{j-1/2}^{n,R},\widehat{u}_{j+1/2}^{n,L}\big),
\end{align*}
which from Lemma~\ref{lem:bounds:u}, and Equation \eqref{eq:bound:u:and:R} as $0\leq p_j^u \leq u_{\max}$, with CFL condition \eqref{eq:CFL:convective} using $\lambda/w_1$ instead of $\lambda$, we conclude that this is in fact a convex combination of quantities bounded between $0$ and $u_{\max}$, therefore $0\leq u^{n+1}_j\leq u_{\max}$. Similarly, for the vector of solid concentrations and substrates, we have
\begin{align*}
 \bB{c}_j^{n+1} &=
w_1 \bB{\Phi}_j^{\bB{c}}\big(\widehat{\bB{c}}_{j-1/2}^{n,R},\widehat{\bB{c}}_{j+1/2}^{n,L},\widehat{\bB{c}}_{j+1/2}^{n,R}\big) +
\sum_{\alpha=2}^{G-1} w_\alpha \big( \bB{p}_j^{\bB{c},n} (z_j^\alpha) + \Delta t \,\bB{\sigma}_{\bB{c}}\bB{r}_{j}^{n,\alpha}\big)
+ w_G \bB{\Phi}_j^{\bB{c}}\big(\widehat{\bB{c}}_{j-1/2}^{n,L},\widehat{\bB{c}}_{j-1/2}^{n,R},\widehat{\bB{c}}_{j+1/2}^{n,L}\big),\\
\bB{s}_j^{n+1} &=
w_1 \bB{\Phi}_j^{\bB{s}}\big(\widehat{\bB{s}}_{j-1/2}^{n,R},\widehat{\bB{s}}_{j+1/2}^{n,L},\widehat{\bB{s}}_{j+1/2}^{n,R}\big) +
\sum_{\alpha=2}^{G-1} w_\alpha \big( \bB{p}_j^{\bB{s},n} (z_j^\alpha) + \Delta t \,\bB{\sigma}_{\bB{s}}\bB{r}_{j}^{n,\alpha}\big)
+ w_G \bB{\Phi}_j^{\bB{s}}\big(\widehat{\bB{s}}_{j-1/2}^{n,L},\widehat{\bB{s}}_{j-1/2}^{n,R},\widehat{\bB{s}}_{j+1/2}^{n,L}\big),
\end{align*}
which thanks to Lemmas~\ref{lem:bounds:c} and~\ref{lem:bounds:s}, we can conclude that $\bB{c}_j^{n+1}\geq \bB{0}$, $\bB{s}_j^{n+1}\geq \bB{0}$ and $\bB{1}^{\tt t}\bB{c}_{j}^{n+1} = u_{j}^{n+1}$, hence $(u_j^{n+1},\bB{c}_j^{n+1},\bB{s}_j^{n+1})\in\mathbcal{W}$.

\end{proof}

In the next theorem, we show that discretizations of \eqref{syst:final} with the MUSCL and CWENO3 reconstructions including limiters, combined with SSPRK approximations of order two and three respectively, satisfy the invariant-region property on $\mathbcal{W}$.

\begin{theorem}\label{thm:invariant-region:time}
Let $\Delta t$ satisfy the CFL condition of Theorem 6.4 and assume
that the polynomial reconstructions (MUSCL or CWENO3 including limiters) are recomputed, at
every Runge--Kutta stage, from the cell averages of that stage. Then the
second- and third-order SSPRK methods of Section~5 are
invariant-region preserving, that is, given $(u^n_j,\boldsymbol{c}^n_j,\boldsymbol{s}^n_j\bigr)\in\mathbcal{W}$ then $\bigl(u^{n+1}_j,\boldsymbol{c}^{n+1}_j,\boldsymbol{s}^{n+1}_j\bigr)\in \mathbcal{W}$ for all $j$ and $n\in\mathbb{N}$.
\end{theorem}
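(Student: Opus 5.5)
The plan is to use the Shu--Osher form of the SSPRK methods: each stage is a convex combination of previous stage values and forward Euler steps. Then Theorem~\ref{thm:invariant-region:WENO} applies to each forward Euler step, and convexity of $\mathbcal{W}$ closes the argument. We first record that $\mathbcal{W}$ is convex. The constraints $0\le u\le u_{\max}$, $\bB{c}\ge\bB{0}$ and $\bB{s}\ge\bB{0}$ are convex, and $\bB{1}^{\tt t}\bB{c}=u$ is linear, so any convex combination of points of $\mathbcal{W}$ stays in $\mathbcal{W}$. This is applied componentwise in $j$ to the triples $(u_j,\bB{c}_j,\bB{s}_j)$.

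For SSPRK2, Stage~1 is exactly the forward Euler update \eqref{eq:update:weno:u}--\eqref{eq:update:weno:s} applied to the data at $t^n$. Since $(u^n_j,\bB{c}^n_j,\bB{s}^n_j)\in\mathbcal{W}$ for all $j$ and $\Delta t$ satisfies the CFL condition of Theorem~\ref{thm:invariant-region:WENO}, we get $(u^{(1)}_j,\bB{c}^{(1)}_j,\bB{s}^{(1)}_j)\in\mathbcal{W}$. Stage~2 is written as
\begin{align*}
u^{n+1}_j=\tfrac12 u^n_j+\tfrac12\bigl(u^{(1)}_j+\Delta t\,\mathcal{H}^u_j(\mathbf{u}^{(1)}_{{\tt st},j},\mathbf{c}^{(1)}_{{\tt st},j},\mathbf{s}^{(1)}_{{\tt st},j})\bigr),
\end{align*}
and analogously for $\bB{c}$ and $\bB{s}$. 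The bracket is a forward Euler step of the same time step $\Delta t$ from stage data lying in $\mathbcal{W}$. The reconstructions are recomputed from these averages, as the hypothesis requires, so Lemma~\ref{lem:IRP:poly:muscl:u:c:s} or Lemma~\ref{lem:IRP:poly:weno:u:c:s} applies to them, and Theorem~\ref{thm:invariant-region:WENO} again gives membership in $\mathbcal{W}$. Convexity then yields the result at $t^{n+1}$.

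For SSPRK3 we proceed the same way. Stage~2 is $\tfrac34(\cdot)^n+\tfrac14\bigl((\cdot)^{(1)}+\Delta t\,\mathcal{H}(\cdot^{(1)})\bigr)$, and Stage~3 is $\tfrac13(\cdot)^n+\tfrac23\bigl((\cdot)^{(2)}+\Delta t\,\mathcal{H}(\cdot^{(2)})\bigr)$. Here one must check the Stage~3 formula carefully: for the standard SSPRK3 the bracket contains the second stage value, not $u^{(1)}$. The printed version with $u^{(1)}_j$ should be read as $u^{(2)}_j$, consistent with the Butcher tableau. Every bracket is a full forward Euler step with step $\Delta t$, since all Shu--Osher coefficients multiplying $\Delta t\mathcal{H}$ equal the coefficient of the preceding stage value, so the SSP coefficient is $1$. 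The CFL condition of Theorem~\ref{thm:invariant-region:WENO} therefore suffices at every stage.

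The main subtle point is the time-dependent data $q_{\rm f},q_{\rm e},q_{\rm u},u_{\rm f},\bB{c}_{\rm f},\bB{s}_{\rm f}$ evaluated at intermediate stage times. For this we note that the CFL condition uses $\|q_{\rm f}\|_\infty$ and that Lemmas~\ref{lem:bounds:u}--\ref{lem:bounds:s} hold for any admissible feed values in $\mathbcal{W}$. A second point is the constraint $\bB{1}^{\tt t}\bB{c}=u$, which is preserved because it is linear and holds for each Euler step. Finally, induction on $n$ gives the claim for all $n\in\mathbb{N}$.
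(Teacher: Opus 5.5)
Your proposal is correct and follows the paper's argument. Both write each SSPRK stage in Shu--Osher form as a convex combination of earlier stage values and forward Euler steps, whose effective step is $\Delta t$ or $0$. Both then apply Theorem~\ref{thm:invariant-region:WENO} to each Euler step, with reconstructions recomputed from the stage averages, and conclude by convexity of $\mathbcal{W}$ and induction. Your observation that the printed Stage~3 must contain $\tfrac23 u_j^{(2)}$ rather than $\tfrac23 u_j^{(1)}$ is also right, and it is actually needed. As printed, the coefficients violate the condition $\beta_{ik}\le\alpha_{ik}$ that the paper's proof relies on, so that proof only applies to the corrected formula.
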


\begin{proof}
Following the ideas from \cite{gottlieb1998total,gottlieb2001strong}, we show that the updated solution of the multi step scheme can be written as a convex combination of terms belonging to $\mathbcal{W}$. To do so, we define the vector of solutions $\vec{\mathbf{u}}_j=(u_j,\boldsymbol{c}_j,\boldsymbol{s}_j)$ and $\vec{\mathbf{u}}_{\text{st},j} = (u_{\text{st},j},\boldsymbol{c}_{\text{st},j},\boldsymbol{s}_{\text{st},j})$, and write the SSPRK methods of Section~\ref{sec:time:approximation} as
\begin{equation}\label{eq:shu-osher}
\vec{\mathbf{u}}^{(0)}_j = \vec{\mathbf{u}}^{n}_j,\qquad
\vec{\mathbf{u}}^{(i)}_j = \sum_{k=0}^{i-1}
\Bigl(\alpha_{ik}\,\vec{\mathbf{u}}^{(k)}_j
 + \Delta t\,\beta_{ik}\,\vec{\mathbcal{H}}_j
\bigl(\vec{\mathbf{u}}^{(k)}_{\text{st},j}\bigr)\Bigr),\quad \text{ for }i\leq m,
\qquad \vec{\mathbf{u}}^{n+1}_j = \vec{\mathbf{u}}^{(m)}_j,
\end{equation}
for all $j$, where $\vec{\mathbcal{H}}_j := (\mathcal{H}_j^u,\mathbcal{H}_j^{\bB{c}},\mathbcal{H}_j^{\bB{s}})$, and $\alpha_{ik},\beta_{ik}\ge0$ are coefficients satisfying
\begin{align*}
\sum_{k=0}^{i-1}\alpha_{ik}=1\quad \text{and}\quad \beta_{ik}\leq \alpha_{ik}, \qquad  \text{for all } i=1,\dots,m.
\end{align*}
Now, setting $\tau_{ik}:=(\beta_{ik}/\alpha_{ik})\Delta t$ whenever $\alpha_{ik}>0$, we observe that the $i$-th stage of
\eqref{eq:shu-osher} can be rewritten as the convex combination
\begin{equation}\label{eq:convex-stage}
\vec{\mathbf{u}}^{(i)}_j
=\sum_{k=0}^{i-1}\alpha_{ik}\,\mathbcal{E}_{ikj}^{(k)},
\quad \text{with}\quad  \mathbcal{E}_{ikj}^{(k)}
:=\vec{\mathbf{u}}^{(k)}_j + \tau_{ik}\,\vec{\mathbcal{H}}_j
\bigl(\vec{\mathbf{u}}_{\mathtt{st},j}^{(k)}\bigr),\qquad \mathbcal{E}^{(0)}_{ikj}:=\vec{\mathbf{u}}^{n}_j,
\end{equation}
where $\mathbcal{E}_{ikj}^{(k)}$ denotes the forward Euler operator with time step
$\tau_{ik}>0$ associated with the semi-discrete scheme \eqref{syst:edos}. Since $\tau_{ik}\leq \Delta t$, then $\tau_{ik}$ fulfils the CFL condition \eqref{eq:CFL:convective} and therefore thanks to Theorem~\ref{thm:invariant-region:WENO}, if $\vec{\mathbf{u}}^{(k)}_j\in \mathbcal{W}$, we have $\mathbcal{E}_{\tau_{ik}}^{(k)}\in\mathbcal{W}$ for $k=0,\dots,i-1$ and $i\leq m$. The rest of the proof follows straightforwardly by induction over index $i$, starting with $\vec{\mathbf{u}}^{(0)}\in\mathbcal{W}$, and using the fact that for each $i$, $\vec{\mathbf{u}}^{(i)}$ is given by a convex combination of $\mathbcal{E}_{ikj}^{(k)}$ from $k=1,\dots, i-1$.
\end{proof}
\begin{remark}
Note that condition $\beta_{ik}\leq \alpha_{ik}$ in the proof above holds for the two schemes presented in Section~\ref{sec:time:approximation}, but does not hold automatically for all SSPRK methods.
\end{remark}

\begin{remark}
The semi-discrete scheme \eqref{eq:edo:u}--\eqref{eq:edo:s} is advanced in time only for the interior cells indexed by $j\in\{1,\dots,N\}$. The $n_{\mathrm g}$ layers of ghost cells on either side of the computational domain, indexed by $j\in\{1-n_{\mathrm g},\dots,0\}$ and $j\in\{N+1,\dots,N+n_{\mathrm g}\}$, are not evolved in time. Instead, their values are reset after each stage of the time integrator described in Section~\ref{sec:time:approximation} according to
	\begin{equation}\label{eq:ghost}
		u_{1-j}:=u_1,\qquad u_{N+j}:=u_N,\qquad\text{for}\quad j=1,\dots,n_{\mathrm g},
	\end{equation}
	with analogous component-wise assignments for $\boldsymbol c$ and $\boldsymbol s$. For the reconstructions described in Section~\ref{sec:polynomial:reconstructions} and the two-point approximations $\mathcal{J}_{j+1/2}^{\rm 2nd}$ \eqref{eq:J:second} and $\mathcal{E}_{j+1/2}^{\rm 2nd}$ \eqref{eq:E:second} of the diffusive-related terms, we set $n_{\mathrm g}=1$, whereas the fourth-order approximations \eqref{eq:J:fourth} and \eqref{eq:E:fourth} require $n_{\mathrm g}=2$.

 In addition, we observe that at the top and bottom of the tank one has $\gamma_{1/2}=\gamma_{N+1/2}=0$, so that $\mathcal J_{1/2}=\mathcal J_{N+1/2}=0$, and consequently $\smash{\mathcal{E}_{1/2}=\mathcal E_{N+1/2}=0}$. Hence $\smash{\widehat{\mathcal V}_{1/2}=q_{1/2}=-q_{\mathrm e}\le0}$ by \eqref{eq:e-disc}, and from the definition of upwind operator we have $\smash{\widehat{\mathcal F}_{1/2}=-q_{\mathrm e}\,\widehat{u}^{\mathrm R}_{1/2}}$ and $\smash{\widehat{\boldsymbol{\mathcal G}}^{\bB{c}}_{1/2}=-q_{\mathrm e}\,\widehat{\boldsymbol c}^{\mathrm R}_{1/2}}$. Furthermore, since $\widehat{u}^{\mathrm R}_{1/2}\le u_{\max}<\rho$, then $\rho q_{1/2}-\widehat{\mathcal F}_{1/2}
=-q_{\mathrm e}\bigl(\rho-\widehat{u}^{\mathrm R}_{1/2}\bigr)\le 0$, so
\begin{equation*}
\widehat{\boldsymbol{\mathcal G}}^{\bB{s}}_{1/2}
	=-q_{\mathrm e}\bigl(\rho-\widehat{u}^{\mathrm R}_{1/2}\bigr)\,
	\frac{\widehat{\boldsymbol s}^{\mathrm R}_{1/2}}{\rho-\widehat{u}^{\mathrm R}_{1/2}}
	=-q_{\mathrm e}\,\widehat{\boldsymbol s}^{\mathrm R}_{1/2}.
\end{equation*}
In the same way we obtain that
$\widehat{\mathcal V}_{N+1/2}=q_{\mathrm u}\ge0$ and
\begin{equation*}
	\widehat{\mathcal F}_{N+1/2}=q_{\mathrm u}\,\widehat{u}^{\mathrm L}_{N+1/2},\qquad
	\widehat{\boldsymbol{\mathcal G}}^{\bB{c}}_{N+1/2}=q_{\mathrm u}\,\widehat{\boldsymbol c}^{\mathrm L}_{N+1/2},
	\qquad
	\widehat{\boldsymbol{\mathcal G}}^{\bB{s}}_{N+1/2}=q_{\mathrm u}\,\widehat{\boldsymbol s}^{\mathrm L}_{N+1/2}.
\end{equation*}
Therefore, we observe that  traces $\widehat{u}^{\mathrm L}_{1/2}$, $\widehat{\boldsymbol c}^{\mathrm L}_{1/2}$, $\widehat{\boldsymbol s}^{\mathrm L}_{1/2}$ and their counterparts at $z_{N+1/2}$ are multiplied by $(-q_{\mathrm e})^{+}=0$ and by $q_{\mathrm u}^{-}=0$, respectively. Thus, no polynomial reconstruction is therefore needed in the ghost cells. Besides, the ghost values enter the scheme solely through the reconstruction stencils of the two
extreme cells $I_1$ and $I_N$. Since \eqref{eq:ghost} is a copy of an interior state which belongs to $\mathbcal{W}$, then hypothesis of Lemmas \ref{lem:bounds:u} to \ref{lem:bounds:s} and of Theorem \ref{thm:invariant-region:WENO} hold at the ghost indices with no additional requirement. Moreover, since \eqref{eq:ghost} is enforced after each stage of \eqref{eq:shu-osher}, every forward Euler operator $ \mathbcal{E}^{(k)}_{ikj}$ appearing in the proof of Theorem \ref{thm:invariant-region:time} acts on a state whose ghost values already belong to $\mathbcal{W}$. The induction over the stage index is therefore unaffected.
\end{remark}

\begin{table}[t]
\caption{Example 1: $L^1$ spatial errors \eqref{eq:errors} and order of convergence \eqref{eq:order} computed with the First-order, MUSCL and IRP-CWENO3 schemes at $t=0.1\, {\rm h}$. The reference solution is computed using the IRP-CWENO3 scheme with $N_{\rm ref} = 10240$ cells.\label{tab:errors:nondiff}}
\setlength{\cmidrulewidth}{0.7pt}
\centering
{   \addtolength{\tabcolsep}{-2pt}{\small
\begin{tabular}{r|cc|cccc|cccccc}
\multicolumn{13}{c}{First-order}\\[0.5ex]
\toprule
$N$ &
$e^{u}_N$ & $\mathcal{R}^{u}_N$ & $e^{\bB{c},1}_N$ & $\mathcal{R}^{\bB{c},1}_N$& $e^{\bB{c},2}_N$ & $\mathcal{R}^{\bB{c},2}_N$ & $e^{\bB{s},1}_N$ & $\mathcal{R}^{\bB{s},1}_N$ & $e^{\bB{s},2}_N$ & $\mathcal{R}^{\bB{s},2}_N$& $e^{\bB{s},3}_N$ & $\mathcal{R}^{\bB{s},3}_N$\\[0.5ex]
\cmidrule(lr){1-1}\cmidrule(lr){2-3}\cmidrule(lr){4-5} \cmidrule(lr){6-7}\cmidrule(lr){8-9} \cmidrule(lr){10-11} \cmidrule(lr){12-13}
160  & 9.86e-2 & --   & 7.08e-2 & --   & 2.78e-2 & --   & 1.76e-4 & --   & 1.14e-3 & --   & 1.22e-4 & --   \\
320  & 5.14e-2 & 0.94 & 3.69e-2 & 0.94 & 1.45e-2 & 0.94 & 9.08e-5 & 0.95 & 5.90e-4 & 0.96 & 6.30e-5 & 0.96 \\
640  & 2.63e-2 & 0.97 & 1.89e-2 & 0.97 & 7.41e-3 & 0.97 & 4.62e-5 & 0.97 & 3.00e-4 & 0.98 & 3.20e-5 & 0.98 \\
1280 & 1.33e-2 & 0.98 & 9.55e-3 & 0.98 & 3.75e-3 & 0.98 & 2.33e-5 & 0.99 & 1.51e-4 & 0.99 & 1.61e-5 & 0.99 \\
2560 & 6.69e-3 & 0.99 & 4.80e-3 & 0.99 & 1.89e-3 & 0.99 & 1.17e-5 & 0.99 & 7.59e-5 & 0.99 & 8.09e-6 & 0.99 \\
\midrule
\multicolumn{11}{c}{} \\[-2.0ex]
\multicolumn{13}{c}{MUSCL}\\[0.5ex]
\toprule
$N$ &
$e^{u}_N$& $\mathcal{R}^{u}_N$ & $e^{\bB{c},1}_N$ & $\mathcal{R}^{\bB{c},1}_N$& $e^{\bB{c},2}_N$ & $\mathcal{R}^{\bB{c},2}_N$ &
$e^{\bB{s},1}_N$& $\mathcal{R}^{\bB{s},1}_N$ & $e^{\bB{s},2}_N$ & $\mathcal{R}^{\bB{s},2}_N$& $e^{\bB{s},3}_N$ & $\mathcal{R}^{\bB{s},3}_N$\\[0.5ex]
\cmidrule(lr){1-1}\cmidrule(lr){2-3}\cmidrule(lr){4-5} \cmidrule(lr){6-7}\cmidrule(lr){8-9} \cmidrule(lr){10-11} \cmidrule(lr){12-13}
160  & 3.09e-3 & --   & 2.22e-3 & --   & 8.72e-4 & --   & 3.47e-6 & --   & 2.10e-5 & --   & 2.40e-6 & --   \\
320  & 7.06e-4 & 2.13 & 5.07e-4 & 2.13 & 1.99e-4 & 2.13 & 7.69e-7 & 2.17 & 4.50e-6 & 2.22 & 4.95e-7 & 2.28 \\
640  & 1.66e-4 & 2.09 & 1.19e-4 & 2.09 & 4.69e-5 & 2.09 & 1.79e-7 & 2.11 & 1.02e-6 & 2.14 & 1.12e-7 & 2.15 \\
1280 & 3.87e-5 & 2.10 & 2.78e-5 & 2.10 & 1.09e-5 & 2.10 & 4.31e-8 & 2.05 & 2.44e-7 & 2.07 & 2.64e-8 & 2.08 \\
2560 & 9.11e-6 & 2.09 & 6.54e-6 & 2.09 & 2.57e-6 & 2.09 & 1.05e-8 & 2.04 & 5.95e-8 & 2.03 & 6.43e-9 & 2.04 \\
\midrule
\multicolumn{11}{c}{} \\[-2.0ex]
\multicolumn{13}{c}{IRP-CWENO3}\\[0.5ex]
\toprule
$N$ &
$e^{u}_N$& $\mathcal{R}^{u}_N$ & $e^{\bB{c},1}_N$ & $\mathcal{R}^{\bB{c},1}_N$& $e^{\bB{c},2}_N$ & $\mathcal{R}^{\bB{c},2}_N$ &
$e^{\bB{s},1}_N$& $\mathcal{R}^{\bB{s},1}_N$ & $e^{\bB{s},2}_N$ & $\mathcal{R}^{\bB{s},2}_N$& $e^{\bB{s},3}_N$ & $\mathcal{R}^{\bB{s},3}_N$\\[0.5ex]
\cmidrule(lr){1-1}\cmidrule(lr){2-3}\cmidrule(lr){4-5} \cmidrule(lr){6-7}\cmidrule(lr){8-9} \cmidrule(lr){10-11} \cmidrule(lr){12-13}
160  & 1.99e-3 & --   & 1.43e-3 & --   & 5.62e-4 & --   & 2.52e-6  & --   & 1.63e-5 & --   & 1.80e-6  & --   \\
320  & 2.49e-4 & 3.00 & 1.79e-4 & 3.00 & 7.02e-5 & 3.00 & 3.03e-7  & 3.06 & 1.96e-6 & 3.06 & 2.16e-7  & 3.05 \\
640  & 3.03e-5 & 3.04 & 2.17e-5 & 3.04 & 8.54e-6 & 3.04 & 3.75e-8  & 3.01 & 2.42e-7 & 3.02 & 2.67e-8  & 3.02 \\
1280 & 3.72e-6 & 3.02 & 2.67e-6 & 3.02 & 1.05e-6 & 3.02 & 4.65e-9  & 3.01 & 3.00e-8 & 3.01 & 3.32e-9  & 3.01 \\
2560 & 4.49e-7 & 3.05 & 3.22e-7 & 3.05 & 1.27e-7 & 3.05 & 5.62e-10 & 3.05 & 3.63e-9 & 3.05 & 4.01e-10 & 3.05 \\
\bottomrule
\end{tabular}}}
\end{table}

\section{Numerical examples}\label{sec:numerical:examples}
We consider a reduced biokinetic model, the denitrification process of converting nitrate ($\rm NO_3$) into nitrogen gas ($\rm N_2$) presented in \cite{SDcace_reactive}. In this model the vector of solids concentration is given by $\boldsymbol{c} = (c^{(1)}, c^{(2)})$, where $c^{(1)}$ and $c^{(2)}$ correspond to the concentration of ordinary heterotrophic organisms $X_{\rm OHO}$, and non-degradable organics $X_{\rm U}$, respectively. The vector of substrates is $\boldsymbol{s} = (s^{(1)},s^{(2)},s^{(3)})$ with $s^{(1)}$, $s^{(2)}$ and $s^{(3)}$ corresponding to the concentration of nitrate $S_{\rm NO_3}$, readily biodegradable substrate $S_{\rm S}$, and nitrogen $S_{\rm N_2}$, respectively. The vector of nonlinear reactions and stoichiometric matrices are given by
\begin{align*}
 \bB{r}(\bB{c},\bB{s}) = \Psi(u)\, c^{(1)}\begin{pmatrix}
           \mu(s^{(1)},s^{(2)})\\
           b
          \end{pmatrix}\,,\quad
  \boldsymbol{\sigma}_{\boldsymbol{c}} &=
\begin{pmatrix}
 1 & -1\\
 0 & f_{\rm p}
\end{pmatrix},\qquad
\bB{\sigma}_{\bB{s}} = \begin{pmatrix}
-\bar{Y} & -1/Y & \bar{Y}\\
 0 & 0.8 & 0
\end{pmatrix}\,,
\end{align*}
where $b=6.94\times10^{-6}{\rm s^{-1}}$ is the decay rate of heterotrophic organisms, $f_{\rm p} = 0.2$ is the portion of these that decays to non-degradable organics, and the parameters $Y=0.67$ and $\bar{Y} = 0.172216$ are (dimensionless) yield factors.
Function $\Psi := \Psi(u)$ is included such that the fourth condition in Assumption~\ref{asumption:reactionterms} is fulfilled, which is assumed to be $\Psi(u) = 1$ for $u < u_{\max}-\widetilde{\varepsilon}$ for a small $0<\widetilde{\varepsilon}<\varepsilon$, and $\Psi(u)=0$ for $u\geq u_{\max}-\varepsilon$. As in all examples shown in this section, the total concentration of solids remains considerably smaller than $u_{\max}$, we simply set $\Psi \equiv 1$.
The specific growth rate function is defined by
\begin{align*}
 \mu(s^{(1)},s^{(2)}) = \mu_{\max} \dfrac{s^{(1)}}{\kappa_1+s^{(1)}}\dfrac{s^{(2)}}{\kappa_2+s^{(2)}}\,,
\end{align*}
with $\mu_{\max} = 5.56\times 10^{-5}\,{\rm s}^{-1}$ being the maximum growth rate, and $\kappa_1=5\times 10^{-4}\,{\rm kg/m^3}$ and $\kappa_2=0.02\,{\rm kg/m^3}$ are saturation constants. Then, the reaction terms read as follows
\begin{align*}
 \boldsymbol{\sigma}_{\boldsymbol{c}}\bB{r}
 &=
 c^{(1)}\begin{bmatrix}
 \mu(s^{(1)},s^{(2)}) - b\\
 f_{\rm p}b
\end{bmatrix}\,,\qquad
 \boldsymbol{\sigma}_{\boldsymbol{s}}\bB{r}
 = c^{(1)}\begin{bmatrix}
  -\bar{Y}\mu(s^{(1)},s^{(2)})\\
  -\mu(s^{(1)},s^{(2)})/Y + 0.8b\\
  \bar{Y}\mu(s^{(1)},s^{(2)})\,
\end{bmatrix}\,,
\end{align*}
The maximal total solids concentration is set to $u_{\max} = 30\, \rm kg/m^3$. The constitutive functions used in all simulations are
\begin{align*}
 v_{\rm hs}(u) = \dfrac{v_0}{1+\left( u/\check{u}\right)^{\eta}} - \dfrac{v_0}{1+\left( u_{\max}/\check{u}\right)^{\eta}}\,, \quad
\sigma_{\rm e}(u) :=\beta\chi_{\{u\geq u_{\rm c}\}}(u-u_{\rm c})\,,
\end{align*}
with $\chi_{{\{u\geq u_{\rm c}\}}}$ defined as the indicator function over the set ${\{u\geq u_{\rm c}\}}$, and constants $v_0 = 1.76 \times 10^{-3}\,\rm m/s$, $\check{u} = 3.87\, \rm kg/m^3$, $u_{\rm c}=5\, \rm kg/m^3$, $\eta = 3.58$ and $\beta = 0.2\, \rm m^2/s^2$. Other parameters are $\rho = 1050\,\rm kg/m^3$, $\rho_{\rm L} = 998\,\rm kg/m^3$, $g = 9.81\,\rm m/s^2$, cross-sectional area $A = 400\,\rm m^2$, $H =1 \,\rm m$ and $B = 3\,\rm m$.  The parameters employed in reconstructions are $\alpha_M=1.4$ and $\varepsilon=h^2$. Furthermore, in what follows, given $J\in \mathbb{N}$, we are going to use the following short-hand notation  $\mathbb{Z}_J := \{1,2,\dots, J\}$.

The numerical scheme and all simulations presented in this work have been implemented in Matlab, and the current solver can be found in the following Github repository:
\[
\text{\href{https://github.com/ReacSedim/mainsolver}{https://github.com/ReacSedim/mainsolver}}.
\]
In what follows, the first-order scheme combined with forward Euler time discretization used in Lemma~\ref{lem:bounds:u}, \ref{lem:bounds:c} and \ref{lem:bounds:s}, which has also been developed in \cite{SDIMA_MOL} is going to be denoted as ``First-order''.  The second-order scheme combining MUSCL reconstruction (Section~\ref{sec:MUSCL}) and SSPRK2 in time will be simply referred to as ``MUSCL''. The third-order WENO scheme developed in Section~\ref{sec:CWENO} including the limiters from Section~\ref{sec:limiters} and SSPRK3 in time will be termed ``IRP-CWENO3'', while the same scheme without limiters will be called  ``CWENO3''.

\begin{figure}[t]
	\centering
	\includegraphics[scale=0.35]{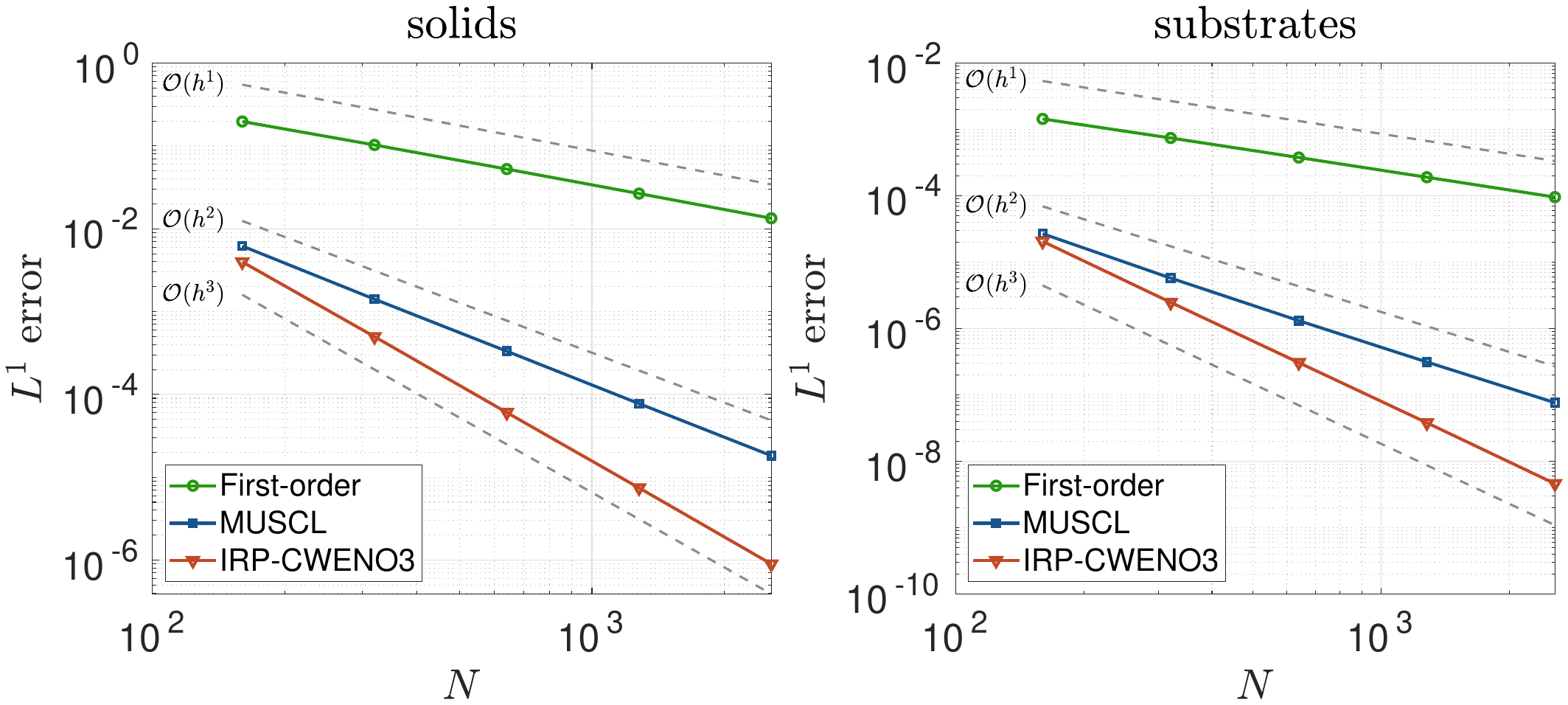}
	\caption{Example 1: combined $L^1$ spatial error of the solids $e_N^{u}+e_N^{\bB{c},1} + e_N^{\bB{c},2}$ (left) and substrates $e_N^{\bB{s},1} +  e_N^{\bB{s},2} + e_N^{\bB{s},3}$ (right) computed with the First-order, MUSCL and IRP-CWENO3 schemes varying with respect to the number of cells $N$ at $t=0.1\,\rm h$. The dashed lines are the corresponding exact order of convergence curve in each case.
	The reference solution is computed using the IRP-CWENO3 scheme with $N_{\rm ref} = 10240$ cells.\label{fig:errorplots}}
\end{figure}

\subsection{Example 1: Accuracy tests}

Since no closed-form solution of the proposed model is available in the literature, we compute the numerical errors using a reference solution obtained from the here developed IRP-CWENO3 scheme on a fine mesh of $N_{\mathrm{ref}}$ cells. To this end, we let $(u^{\mathrm{ref}}_l)_{l\in\mathbb{Z}_{N_{\mathrm{ref}}}}$ be the reference solution at the time $T$ under consideration, associated with the cells
$I^{\mathrm{ref}}_l = [z^{\mathrm{ref}}_{l-1/2},z^{\mathrm{ref}}_{l+1/2}]$, for all $l\in \mathbb{Z}_{N_{\rm ref}}$. Then, given a discretization with $N$ number of cells, being $N$ a divisor of $N_{\rm ref}$, and a specific cell $I_j$, we define $\mathcal{K}_j$ as the set of $M := N_{\mathrm{ref}}/N\in\mathbb{N}$ fine cells contained in the coarse $I_j$:
\begin{align*}
 \mathcal{K}_j:=\bigl\{\,l\in\mathbb{Z}_{N_{\mathrm{ref}}}:
	I^{\mathrm{ref}}_l\subseteq I_j\,\bigr\}.
\end{align*}
Then, the cell average of the reference solution over $I_j$ is given by
\begin{equation}\label{eq:restriction}
	\tilde u^{\,\mathrm{ref}}_j(T):=\dfrac{1}{M}\sum_{l\in\mathcal{K}_j}u^{\mathrm{ref}}_l(T),
	\qquad\text{for all}\quad j\in\mathbb{Z}_N,
\end{equation}
and the average of the solid and substrate components $\smash{\tilde{c}^{\,\mathrm{ref},(i)}_j(T)}$ and $\smash{\tilde{s}^{\,\mathrm{ref},(k)}_j(T)}$ are defined analogously for each component $i=1,\dots,n_{\bB{c}}$ and $k = 1,\dots,n_{\bB{s}}$. Furthermore, for a fixed number of cells $N$, we denote by $u^N_j(T)$, $c^{N,(i)}_j(T)$ and $s^{N,(k)}_j(T)$ the numerical solution at time $T$ computed on the mesh of $N$ cells. The approximate $L^1$ errors of each
individual unknown are then defined by
\begin{equation}\label{eq:errors}
	\begin{aligned}
		e^{u}_N(T)&:=h\sum_{j\in\mathbb{Z}_N}\bigl|\tilde u^{\,\mathrm{ref}}_j(T)-u^N_j(T)\bigr|, \\
		e^{\bB{c},i}_N(T)&:=h\sum_{j\in\mathbb{Z}_N}
		\bigl|\tilde c^{\,\mathrm{ref},(i)}_j(T)-c^{N,(i)}_j(T)\bigr|, &\quad i=1,\dots,\nsp, \\
		e^{\bB{s},k}_N(T)&:=h\sum_{j\in\mathbb{Z}_N}
		\bigl|\tilde s^{\,\mathrm{ref},(k)}_j(T)-s^{N,(k)}_j(T)\bigr|, &\quad k=1,\dots,\ssp,
	\end{aligned}
\end{equation}
where $h=(H+B)/N$. We also measure the convergence rates by
\begin{equation}\label{eq:order}
\begin{aligned}
\mathcal{R}^{u}_N&:=\log_2\left(\frac{e^{u}_N}{e^{u}_{2N}}\right),\quad
\mathcal{R}^{\bB{c},i}_N:=\log_2\left(\frac{e^{\bB{c},i}_N}{e^{\bB{c},i}_{2N}}\right),\quad
\mathcal{R}^{\bB{s},k}_N:=\log_2\left(\frac{e^{\bB{s},k}_N}{e^{\bB{s},k}_{2N}}\right),
\end{aligned}
\end{equation}
for all $i=1,\dots,\nsp$ and $k=1,\dots,\ssp$.

For this test, we consider a sequence of spatial discretizations with $N_{l}=10\cdot 2^{l}$, for $l=4,\ldots,8$, and compute a reference solution using $N_{\mathrm{ref}}=10240$ grid points. We set $q_{\mathrm{f}}(t)=q_{\mathrm{u}}(t)=q_{\mathrm{e}}(t)=0$, which implies that $u_{\mathrm{f}}(t)=0$, $\bB{c}_{\mathrm{f}}(t)=\bB{0}$, and $\bB{s}_{\mathrm{f}}(t)=\bB{0}$. Furthermore, we prescribe the following smooth initial condition:
\begin{equation*}
u_0(z)      = 5+2 \psi(z) \,{\rm kg/m^3}, \qquad
\bB{c}_0(z) = u_0(z)\begin{pmatrix}5/7\\ 2/7\end{pmatrix}, \qquad
\bB{s}_0(z) = \begin{pmatrix} 0.5+0.05 \psi(z) \\ 1.0+0.1 \psi(z)\\ 0.1+0.01 \psi(z) \end{pmatrix}{\rm kg/m^3},
\end{equation*}
where $\psi(z) :=\varphi(z-1)$, and $\varphi$ is the function defined by
\begin{equation*}
\varphi(\zeta):=
\begin{cases}
(1-\zeta^{2})^{6} &\text{ if } |\zeta|<1,\\[2pt]
0&\text{ if } |\zeta|\geq 1.
\end{cases}
\end{equation*}
The approximate $L^1$ spatial errors defined by \eqref{eq:errors} and their corresponding orders of convergence given by \eqref{eq:order} are displayed in Table \ref{tab:errors:nondiff} at time $t =0.1\, {\rm h}$ for the three schemes, First-order, MUSCL and IRP-CWENO3.
We observe that the numerical errors effectively decrease upon mesh refinement for all components of the solution, and the order of convergence in all unknowns confirm first-, second- and third-order for the First-order, MUSCL, and IRP-CWENO3 schemes, respectively. In Figure~\ref{fig:errorplots} we show the error plots for the combined $L^1$ errors of the solid components given by  $e_N^{u}+e_N^{\bB{c},1} + e_N^{\bB{c},2}$, and the combined errors for the substrates $e_N^{\bB{s},1} +  e_N^{\bB{s},2} + e_N^{\bB{s},3}$ for the three already mentioned schemes. In all methods, we observe that the error curves are approximately parallel to the actual curves related to the exact orders of convergence $\mathcal{O}(h^1)$,  $\mathcal{O}(h^2)$ and  $\mathcal{O}(h^3)$, respectively.

\begin{table}[t]
\caption{Example 2: invariant-region-preserving without diffusion for the CWENO3 and IRP-CWENO3 schemes, respectively.
The minimum and maximum values related to set $\mathbcal{W}$ at the specified times $t$, denoted by underlining and overlining, respectively, are defined in \eqref{eq:IRP:quantities}. \label{tab:inv-region:CWENO3:IRP-CWENO3}}
\setlength{\cmidrulewidth}{0.7pt}
\centering
{\addtolength{\tabcolsep}{1pt}{\small
\begin{tabular}{r|ccccc|ccccc}
\multicolumn{1}{c}{$t=3\, \mathrm{h}$} & \multicolumn{5}{c}{CWENO3} & \multicolumn{5}{c}{IRP-CWENO3}\\
\toprule
$N$ &
$\underline{u}$& $\overline{u}$  & $\underline{\bB{c}}$ & $\underline{\bB{s}}$ & $\overline{|\,u-\bB{1}^{\tt t}\bB{c}\,|}$  &
$\underline{u}$& $\overline{u}$  & $\underline{\bB{c}}$ & $\underline{\bB{s}}$ & $\overline{|\,u-\bB{1}^{\tt t}\bB{c}\,|}$  \\
\cmidrule(lr){1-1} \cmidrule(lr){2-6} \cmidrule(lr){7-11}
40  & -1.08e-2 & 19.35 & -7.72e-3 & -5.93e-3 & 4.44e-12 & 0.0 & 19.35 & 0.0 & 0.0 &  4.38e-12\\
80  & -8.48e-3 & 21.18 & -6.05e-3 & -3.15e-3 & 5.30e-12 & 0.0 & 21.18 & 0.0 & 0.0 &  5.24e-12\\
160 & -4.45e-3 & 22.34 & -3.18e-3 & -8.31e-4 & 5.81e-12 & 0.0 & 22.34 & 0.0 & 0.0 &  5.62e-12\\
320 & -2.77e-3 & 22.82 & -1.97e-3 & -7.68e-4 & 5.91e-12 & 0.0 & 22.82 & 0.0 & 0.0 &  5.92e-12\\
640 & -1.68e-3 & 23.00 & -1.20e-3 & -5.80e-4 & 7.25e-12 & 0.0 & 23.00 & 0.0 & 0.0 &  7.25e-12\\
\midrule
\multicolumn{11}{c}{} \\[-2.0ex]
\multicolumn{1}{c}{$t=6\, \mathrm{h}$} & \multicolumn{5}{c}{CWENO3} & \multicolumn{5}{c}{IRP-CWENO3}\\[0.5ex]
\toprule
 $N$ &
$\underline{u}$& $\overline{u}$  & $\underline{\bB{c}}$ & $\underline{\bB{s}}$ & $\overline{|\,u-\bB{1}^{\tt t}\bB{c}\,|}$ &
$\underline{u}$& $\overline{u}$  & $\underline{\bB{c}}$ & $\underline{\bB{s}}$ & $\overline{|\,u-\bB{1}^{\tt t}\bB{c}\,|}$\\
\cmidrule(lr){1-1} \cmidrule(lr){2-6} \cmidrule(lr){7-11}
40  & -1.08e-2   &  19.91 & -7.72e-3 & -5.93e-3 &  5.30e-12 & 0.0  &  19.91 &  0.0  & 0.0 &  4.93e-12 \\
80  & -8.48e-3   &  21.18 & -6.05e-3 & -3.15e-3 &  6.47e-12 & 0.0  &  21.18 &  0.0  & 0.0 &  6.43e-12 \\
160 & -4.45e-3   &  22.34 & -3.18e-3 & -8.31e-4 &  7.76e-12 & 0.0  &  22.34 &  0.0  & 0.0 &  7.65e-12 \\
320 & -2.77e-3   &  22.82 & -1.97e-3 & -7.68e-4 &  9.83e-12 & 0.0  &  22.82 &  0.0  & 0.0 &  9.76e-12 \\
640 & -1.68e-3   &  23.00 & -1.20e-3 & -5.80e-4 &  1.30e-11 & 0.0  &  23.00 &  0.0  & 0.0 &  1.31e-11 \\
\bottomrule
\end{tabular}}}
\end{table}

\subsection{Example 2: Invariant-region-preservation without diffusion}

To show that the invariant-region results from Section~\ref{sec:invariant-region} hold for the IRP-CWENO3 scheme, i.e., the computed $(u_j^n,\bB{c}_j^n,\bB{s}_j^n)$ belongs to $\mathbcal{W}$ \eqref{eq:def:invariant:region:set:W} for all cells $I_j$ and time $t^n$, we consider a standard example of reactive sedimentation from \cite{SDIMA_MOL,SDcace_reactive}. For this purpose, we specify the following piecewise-constant-in-time feed concentrations:
\begin{align*}
\bB{s}_{\rm f} (t) &=
\begin{pmatrix} 0.006\\ 0.0009\\ 0 \end{pmatrix}{\rm kg/m^3},\qquad \bB{c}_{\rm f}(t) = u_{\rm f}(t)
\begin{pmatrix}
5/7 \\
2/7
\end{pmatrix}\,,\qquad u_{\rm f}(t) =
\begin{cases}
1.0\,\rm kg/m^3 &\mbox{ if } 0 \,{\rm h}  \leq t < 2 \,{\rm h} \,,\\
0.5\,\rm kg/m^3 &\mbox{ if } 2 \,{\rm h}  \leq t < 4 \,{\rm h} \,,\\
3.0\,\rm kg/m^3 &\mbox{ if } 4 \,{\rm h}  \leq t < 7 \,{\rm h} \,,\\
4.0\,\rm kg/m^3 &\mbox{ if } t \geq 7\, {\rm h}\,,
\end{cases}
\end{align*}
and the feed and underflow bulk velocities
\begin{align*}
q_{\rm f}(t) & =
\begin{cases}
1.125\,{\rm m/h}  &\mbox{ if } 0 \,{\rm h} \leq t < 2\,{\rm h}\,,\\
0.325\,{\rm m/h}  &\mbox{ if } 2 \,{\rm h}  \leq t < 4 \,{\rm h}\,,\\
1.625\,{\rm m/h} &\mbox{ if } t \geq 4\,{\rm h}\,,
\end{cases}\qquad
q_{\rm u}(t)  =
\begin{cases}
0.075\,{\rm m/h}   &\mbox{ if } 0 \,{\rm h} \leq t < 2 \,{\rm h}\,,\\
0.25\,{\rm m/h}    &\mbox{ if } 2 \,{\rm h} \leq t < 4 \,{\rm h}\,,\\
0.0875\,{\rm m/h}  &\mbox{ if } 4 \,{\rm h} \leq t < 7 \,{\rm h}\,,\\
0.125\,{\rm m/h}   &\mbox{ if } t \geq 7 \,{\rm h}\,,
\end{cases}
\end{align*}
where the effluent velocity is given by $q_{\rm e}(t) = q_{\rm f}(t) - q_{\rm u}(t)$. In addition, we also set the following initial functions:
\begin{align*}
\bB{c}_{0}(z) & = u_{0}(z)
\begin{pmatrix}
5/7 \\
2/7
\end{pmatrix}\,,\qquad
u_0(z) =
\begin{cases}
0 \,{\rm kg/m^3} &\mbox{ if } z < 0.5 \,{\rm m},\\
3.8z + 1.6 \,{\rm kg/m^3} &\mbox{ if }  z \geq 0.5 \,{\rm m},
\end{cases}\\
\bB{s}_0(z) &=
\begin{cases}
(0.006, 0, 0)^{\tt t} \,{\rm kg/m^3} &\mbox{ if } z < 0.5 \,{\rm m},\\
(0, 0.12(z-0.5), 0.006)^{\tt t}\,{\rm kg/m^3} &\mbox{ if }  z \geq 0.5 \,{\rm m}.
\end{cases}
\end{align*}
 In order to assess the effect of the scaling limiters described in section \ref{sec:limiters}, we monitor the minimum and maximum values related to set $\mathbcal{W}$ attained by each discrete unknown over the whole space-time computational grid, that is, we compute
\begin{equation}\label{eq:IRP:quantities}
	\begin{aligned}
		&\underline{u}:=\min_{(j,n)\in\mathbb{Z}_N\times\mathbb{Z}_{N_T}}\bigl\{u^n_j\bigr\},\quad	\overline{u}:=\max_{(j,n)\in\mathbb{Z}_N\times\mathbb{Z}_{N_T}}\bigl\{u^n_j\bigr\},\quad \underline{\bB{c}}:=\min_{\substack{(j,n)\in\mathbb{Z}_N\times\mathbb{Z}_{N_T}\\i\in \{1,\dots,\nsp\}}}\bigl\{c^{n,(i)}_j\bigr\},\\
		&\underline{\bB{s}}:=\min_{\substack{(j,n)\in\mathbb{Z}_N\times\mathbb{Z}_{N_T}\\l\in \{1,\dots,\ssp\}}}\bigl\{s^{n,(l)}_j\bigr\},  \quad \overline{|u-\bB{1}^{\tt t}\bB{c}|}:=\max_{(j,n)\in\mathbb{Z}_N\times\mathbb{Z}_{N_T}}\bigl\{|u_j^n-\bB{1}^{\tt t}\bB{c}_j^n|\bigr\},
	\end{aligned}
\end{equation}
where both $\underline{\bB{c}}$ and $\underline{\bB{s}}$ are scalars and $N_T$ stands for the number of time steps. The quantities defined in \eqref{eq:IRP:quantities}, computed with the CWENO3 scheme without limiters, and with the invariant-region-preserving scheme IRP-CWENO3, are reported in Table~\ref{tab:inv-region:CWENO3:IRP-CWENO3} for different mesh sizes at two simulation times, namely, $t=3\,\mathrm{h}$ and $t=6\,\mathrm{h}$. For the schemes without limiters, negative values of $\underline{u}, \underline{\bB{c}}$, and $\underline{\bB{s}}$ occur in the numerical solutions for all mesh sizes and at both simulation times. By contrast, when the limiters are applied, the numerical solutions remain within $\mathbcal{W}$, as shown in Section~\ref{sec:invariant-region}. In addition, in Figure \ref{fig:fig4}, we show the numerical solution obtained with the IRP-CWENO3 and observe that the scheme accurately reproduce the test case from \cite{SDIMA_MOL}. Figure~\ref{fig:fig5} compares the first-, second- and third-order schemes on a fixed grid with $N=160$ cells against a reference solution computed with the IRP-CWENO3 scheme on a finer mesh (with $N_{\rm ref} = 640$) and we observe that the numerical solution approaches the reference one as the order of the scheme increases.

\begin{figure}[t]
	\centering
	\includegraphics[width=\textwidth]{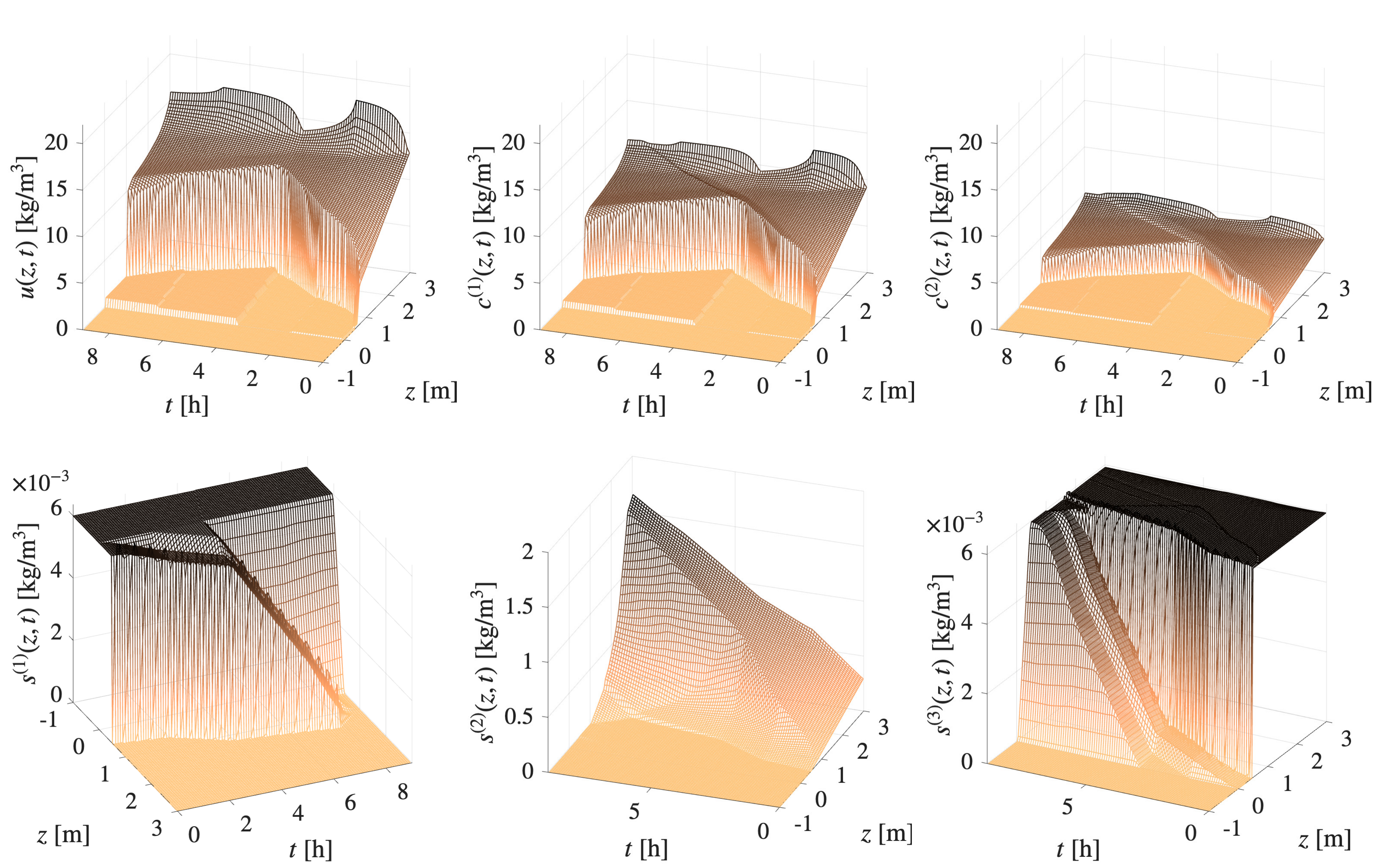}
	\caption{Example 2: numerical solution with $N = 640$ produced with IRP-CWENO3 scheme at $t = 9\, \mathrm{h}$. The solution has been projected onto a coarse grid.\label{fig:fig4}}
\end{figure}
\begin{figure}[t]
	\centering
	\includegraphics[width=0.8\textwidth]{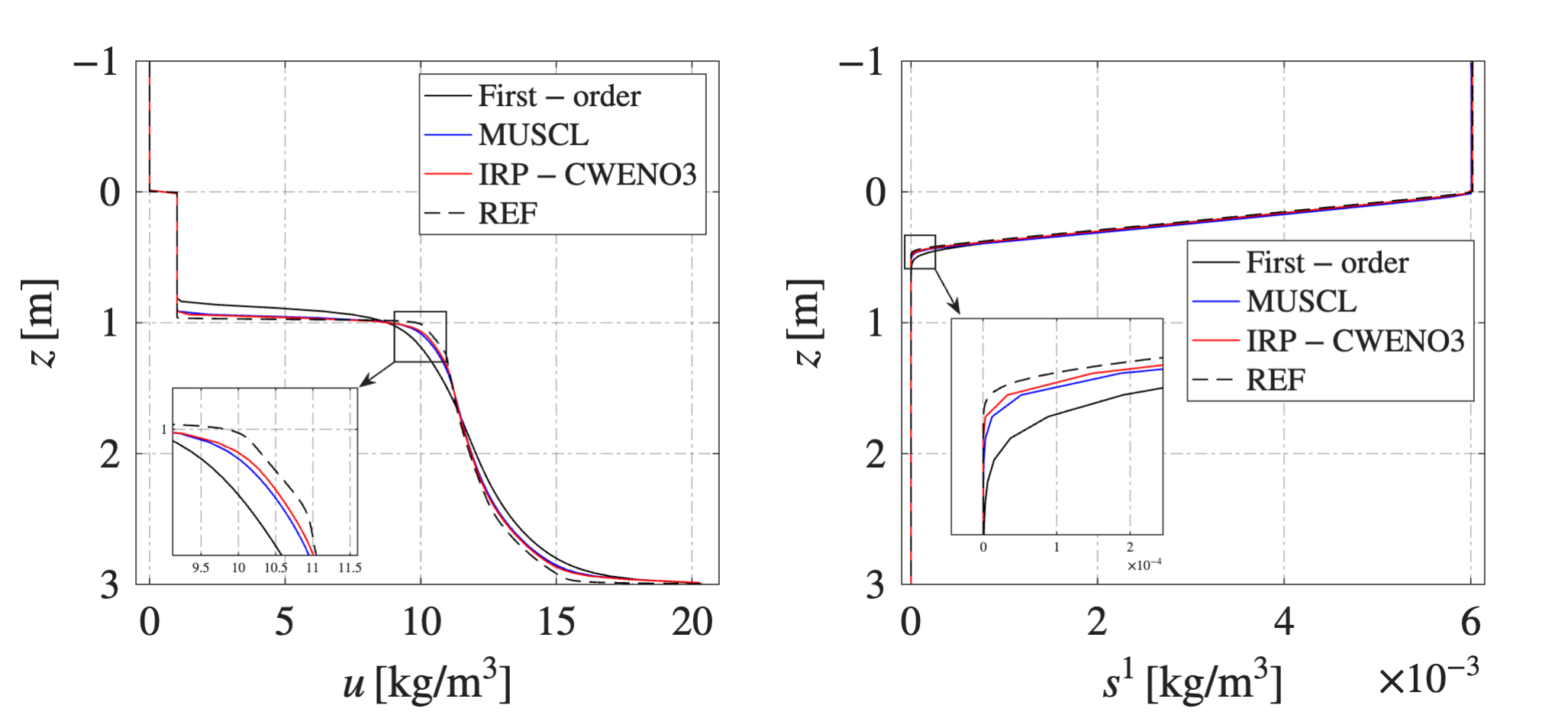}
	\caption{Example 2: numerical solutions with $N = 160$ produced with first-order, MUSCL and IRP-CWENO3 schemes at $t = 9\, \mathrm{h}$. The reference solution is computed with $N_{\mathrm{ref}} = 640$ using IRP-CWENO3 scheme.\label{fig:fig5}}
\end{figure}

\subsection{Example 3: Diffusive case with compression effects}
In this example, we consider the inclusion of the diffusion-related term $\partial_z\mathcal{B}$ (resp.~$\partial_z\mathcal{D}$) in Equation~\eqref{eq:model:u}, which accounts for the compression effects above a critical concentration $u_{\rm c}$, and its corresponding flux term $\mathcal{E}$ (resp.~$\mathcal{J}$). We study the MUSCL and IRP-CWENO3 schemes combined with approximations of the diffusion-related operator given by $\mathcal{E}_{j+1/2}$ following the $\mathcal{D}$-method and $\mathcal{B}$-method from Section~\ref{sec:numscheme}.
For this we consider the second- and fourth-order approximations of $\mathcal{E}$ (resp.~$\mathcal{J}$) defined in \eqref{eq:E:second} and \eqref{eq:E:fourth} (resp.~\eqref{eq:J:second} and \eqref{eq:J:fourth}).

First, we use the functions and initial conditions from Example 1 to assess the convergence rates of both schemes in combination with the two proposed methods to approximate the diffusion terms. We then compute a reference solution with the IRP-CWENO3 scheme using the $\mathcal{D}$-method with $N_{\rm ref} = 2560$ and $\mathcal{J}_{j+1/2} = \mathcal{J}_{j+1/2}^{\rm 4th}$ until time $t=0.1\,\rm h$. The spatial numerical errors are reported in Table~\ref{tab:errors:diff} and show that both schemes, MUSCL and IRP-CWENO3, achieve their expected orders of convergence with both approaches, $\mathcal{D}$-method and $\mathcal{B}$-method, respectively. Next, we use the functions and initial conditions from Example 2 and simulate the continuous reactive sedimentation problem up to $t=9\, \mathrm{h}$ with $N=640$. The corresponding numerical solutions are presented in Figure \ref{fig:fig6}, where we also observe the evolution of the expected discontinuity produced due to the boundary and initial conditions, and the corresponding smoothing effect in the plot of $u$ for concentrations above $u_{\rm c}$. In addition, using this setting, we investigate the invariant-region-preserving property of the MUSCL and IRP-CWENO3 schemes when combined with the proposed discretizations of the diffusion-related terms. The results reported in Table~\ref{tab:inv-region:diff:MUSCL:IRP-CWENO3} show that the invariant region is preserved, that is, $\underline{u}\approx 0$, $\underline{\bB{c}}\approx 0$, $\underline{\bB{s}}\approx 0$, $\underline{u}<u_{\max}$ and $\overline{|u-\bB{1}^{\tt t}\bB{c}|}\approx 0$, in both approaches, with the $\mathcal{D}$-method and $\mathcal{B}$-method, respectively.

\begin{table}[t]
\caption{Example 3: numerical errors \eqref{eq:errors} and approximate order of convergence \eqref{eq:order} at time $t=0.1\,\rm h$ computed with the MUSCL and IRP-CWENO3 schemes, respectively. The diffusion function is approximated using \eqref{eq:J:second}.
The reference solution is computed using the IRP-CWENO3 scheme with $N_{\rm ref} = 2560$ cells.\label{tab:errors:diff}}
\setlength{\cmidrulewidth}{0.7pt}
\centering
{\addtolength{\tabcolsep}{-2pt}{\small
\begin{tabular}{r|cc|cccc|cccccc}
\multicolumn{13}{c}{MUSCL \, \& \,  $\mathcal{D}$-method with $\mathcal{J}_{j+1/2}^{\rm 2nd}$}\\[0.5ex]
\toprule
$N$ &
$e^{u}_N$& $\mathcal{R}^{u}_N$ & $e^{\bB{c},1}_N$ & $\mathcal{R}^{\bB{c},1}_N$& $e^{\bB{c},2}_N$ & $\mathcal{R}^{\bB{c},2}_N$ &
$e^{\bB{s},1}_N$& $\mathcal{R}^{\bB{s},1}_N$ & $e^{\bB{s},2}_N$ & $\mathcal{R}^{\bB{s},2}_N$& $e^{\bB{s},3}_N$ & $\mathcal{R}^{\bB{s},3}_N$\\[0.5ex]
\cmidrule(lr){1-1}\cmidrule(lr){2-3}\cmidrule(lr){4-5} \cmidrule(lr){6-7}\cmidrule(lr){8-9} \cmidrule(lr){10-11} \cmidrule(lr){12-13}
80  & 1.42e-3 & --   & 1.02e-3 & --   & 4.00e-4 & --   & 3.43e-6 & --   & 2.21e-5 & --   & 2.68e-6 & --   \\
160 & 3.99e-4 & 1.83 & 2.87e-4 & 1.83 & 1.13e-4 & 1.83 & 1.00e-6 & 1.78 & 6.31e-6 & 1.81 & 7.62e-7 & 1.81 \\
320 & 1.13e-4 & 1.82 & 8.10e-5 & 1.82 & 3.18e-5 & 1.82 & 2.73e-7 & 1.88 & 1.80e-6 & 1.81 & 2.14e-7 & 1.84 \\
640 & 2.99e-5 & 1.92 & 2.15e-5 & 1.92 & 8.44e-6 & 1.92 & 7.17e-8 & 1.93 & 4.78e-7 & 1.91 & 5.62e-8 & 1.93 \\
\midrule
\multicolumn{13}{c}{MUSCL \, \& \,  $\mathcal{B}$-method with $\mathcal{E}_{j+1/2}^{\rm 2nd}$}\\[0.5ex]
\toprule
80  & 1.44e-3 & --   & 1.03e-3 & --   & 4.05e-4 & --   & 3.51e-6 & --   & 2.29e-5 & --   & 2.75e-6 & --   \\
160 & 3.93e-4 & 1.87 & 2.82e-4 & 1.87 & 1.11e-4 & 1.87 & 9.94e-7 & 1.82 & 6.24e-6 & 1.87 & 7.45e-7 & 1.88 \\
320 & 1.11e-4 & 1.83 & 7.94e-5 & 1.83 & 3.12e-5 & 1.83 & 2.68e-7 & 1.89 & 1.75e-6 & 1.83 & 2.08e-7 & 1.84 \\
640 & 2.93e-5 & 1.92 & 2.10e-5 & 1.92 & 8.27e-6 & 1.92 & 7.01e-8 & 1.93 & 4.65e-7 & 1.91 & 5.47e-8 & 1.93 \\
\midrule
\multicolumn{11}{c}{} \\[-2.0ex]
\multicolumn{13}{c}{IRP-CWENO3 \, \& \,  $\mathcal{D}$-method with $\mathcal{J}_{j+1/2}^{\rm 4th}$}\\[0.5ex]
\toprule
80  & 1.59e-3 & --   & 1.14e-3 & --   & 4.48e-4 & --   & 4.01e-6 & --   & 2.84e-5 & --   & 3.24e-6 & --   \\
160 & 1.93e-4 & 3.04 & 1.39e-4 & 3.04 & 5.45e-5 & 3.04 & 4.81e-7 & 3.06 & 3.39e-6 & 3.07 & 3.87e-7 & 3.06 \\
320 & 2.49e-5 & 2.95 & 1.79e-5 & 2.95 & 7.04e-6 & 2.95 & 5.99e-8 & 3.01 & 4.19e-7 & 3.02 & 4.75e-8 & 3.03 \\
640 & 3.56e-6 & 2.81 & 2.56e-6 & 2.81 & 1.00e-6 & 2.81 & 7.78e-9 & 2.95 & 5.33e-8 & 2.97 & 5.95e-9 & 3.00 \\
\midrule
\multicolumn{11}{c}{} \\[-2.0ex]
\multicolumn{13}{c}{IRP-CWENO3 \, \& \,  $\mathcal{B}$-method with $\mathcal{E}_{j+1/2}^{\rm 4th}$}\\[0.5ex]
\toprule
80  & 1.60e-3 & --   & 1.15e-3 & --   & 4.52e-4 & --   & 4.04e-6 & --   & 2.87e-5 & --   & 3.28e-6 & --   \\
160 & 1.94e-4 & 3.05 & 1.39e-4 & 3.05 & 5.47e-5 & 3.05 & 4.84e-7 & 3.06 & 3.41e-6 & 3.07 & 3.90e-7 & 3.07 \\
320 & 2.50e-5 & 2.96 & 1.79e-5 & 2.96 & 7.05e-6 & 2.96 & 6.01e-8 & 3.01 & 4.20e-7 & 3.02 & 4.77e-8 & 3.03 \\
640 & 3.56e-6 & 2.81 & 2.56e-6 & 2.81 & 1.01e-6 & 2.81 & 7.79e-9 & 2.95 & 5.34e-8 & 2.98 & 5.96e-9 & 3.00 \\
\bottomrule
\end{tabular}}}
\end{table}

\begin{figure}[t]
	\centering
	\includegraphics[width=\textwidth]{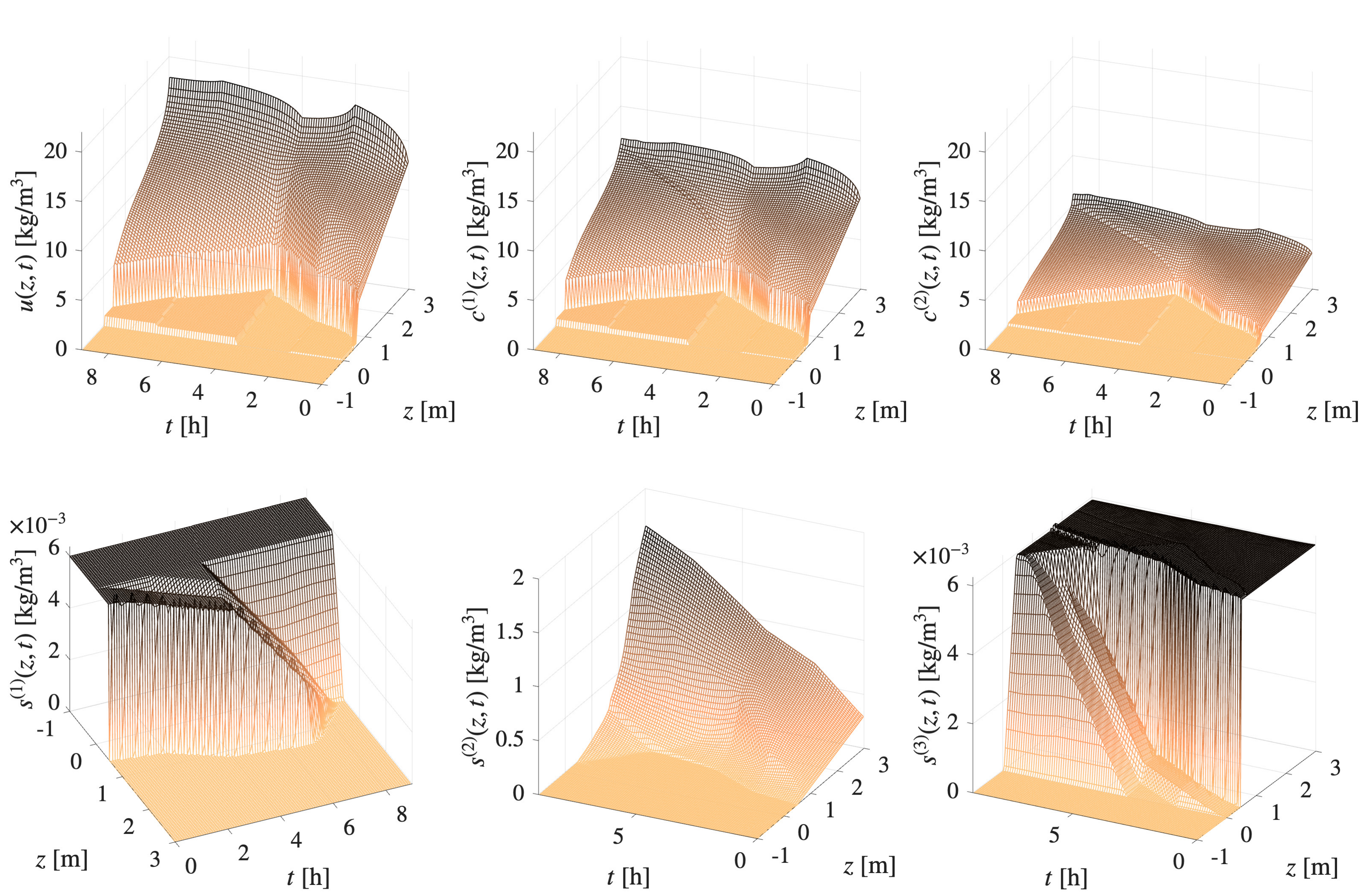}
	\caption{Example 3: numerical solution with $N = 640$ produced with IRP-CWENO3 scheme at $t = 9\, \mathrm{h}$. The solution has been projected onto a coarse grid.\label{fig:fig6}}
\end{figure}

\begin{table}[t]
\caption{Example 3: invariant-region-preserving test including diffusion for the MUSCL and IRP-CWENO3 schemes with the $\mathcal{D}$-method and $\mathcal{B}$-method from Section~\ref{sec:numscheme}, respectively.
The minimum and maximum values related to set $\mathbcal{W}$ at the specified time $t$, denoted by underlining and overlining, respectively, are defined in \eqref{eq:IRP:quantities}. \label{tab:inv-region:diff:MUSCL:IRP-CWENO3}}
\setlength{\cmidrulewidth}{0.7pt}
\centering
{\addtolength{\tabcolsep}{1pt}{\small
\begin{tabular}{r|ccccc|ccccc}
\multicolumn{11}{c}{MUSCL}\\[0.7ex]
\toprule
\multicolumn{1}{c|}{$t=9\, \mathrm{h}$} &
\multicolumn{5}{c|}{$\mathcal{D}$-method with $\mathcal{J}_{j+1/2}^{\rm 2nd}$} &
\multicolumn{5}{c}{ $\mathcal{B}$-method with $\mathcal{E}_{j+1/2}^{\rm 2nd}$}\\[0.5ex]
\cmidrule(lr){1-1} \cmidrule(lr){2-6}\cmidrule(lr){7-11}
$N$ &
$\underline{u}$& $\overline{u}$  & $\underline{\bB{c}}$ & $\underline{\bB{s}}$ & $\overline{|\,u-\bB{1}^{\tt t}\bB{c}\,|}$  &
$\underline{u}$& $\overline{u}$  & $\underline{\bB{c}}$ & $\underline{\bB{s}}$ & $\overline{|\,u-\bB{1}^{\tt t}\bB{c}\,|}$  \\
\cmidrule(lr){1-1} \cmidrule(lr){2-6} \cmidrule(lr){7-11}
40  &  0.0 & 18.50 & 0.0 & 0.0 & 1.37e-12    &   0.0 & 18.49 & 0.0 & 0.0 & 1.44e-12\\
80  &  0.0 & 18.88 & 0.0 & 0.0 & 1.35e-12    &   0.0 & 18.88 & 0.0 & 0.0 & 9.63e-13\\
160 &  0.0 & 19.04 & 0.0 & 0.0 & 2.28e-12    &   0.0 & 19.04 & 0.0 & 0.0 & 9.63e-13\\
320 &  0.0 & 19.11 & 0.0 & 0.0 & 6.45e-12    &   0.0 & 19.11 & 0.0 & 0.0 & 1.04e-12\\
\midrule
\multicolumn{11}{c}{} \\[-1.0ex]
\multicolumn{11}{c}{IRP-CWENO3}\\[0.7ex]
\toprule
\multicolumn{1}{c|}{$t=9\, \mathrm{h}$} &
\multicolumn{5}{c|}{$\mathcal{D}$-method with $\mathcal{J}_{j+1/2}^{\rm 4th}$ } &
\multicolumn{5}{c}{ $\mathcal{B}$-method with $\mathcal{E}_{j+1/2}^{\rm 4th}$}\\[0.5ex]
\cmidrule(lr){1-1} \cmidrule(lr){2-6}\cmidrule(lr){7-11}
$N$ &
$\underline{u}$& $\overline{u}$  & $\underline{\bB{c}}$ & $\underline{\bB{s}}$ & $\overline{|\,u-\bB{1}^{\tt t}\bB{c}\,|}$  &
$\underline{u}$& $\overline{u}$  & $\underline{\bB{c}}$ & $\underline{\bB{s}}$ & $\overline{|\,u-\bB{1}^{\tt t}\bB{c}\,|}$  \\
\cmidrule(lr){1-1} \cmidrule(lr){2-6}\cmidrule(lr){7-11}
40  & 0.0 & 18.45 & 0.0 & 0.0 & 5.26e-12 & 0.0 & 18.44 & 0.0 & 0.0 & 5.60e-12\\
80  & 0.0 & 18.85 & 0.0 & 0.0 & 8.51e-12 & 0.0 & 18.85 & 0.0 & 0.0 & 8.64e-12\\
160 & 0.0 & 19.03 & 0.0 & 0.0 & 1.15e-11 & 0.0 & 19.03 & 0.0 & 0.0 & 1.28e-11\\
320 & 0.0 & 19.10 & 0.0 & 0.0 & 2.40e-11 & 0.0 & 19.10 & 0.0 & 0.0 & 3.15e-11\\
\midrule
\end{tabular}}
}
\end{table}

\section{Conclusions}\label{sec:conclusions}

In this work, we have developed high-order finite volume numerical schemes for the approximation of a model of reactive sedimentation, which consists of a system of convection-diffusion-reaction equations.
Two component-wise polynomial reconstructions have been proposed, a second-order MUSCL and a third-order central WENO scheme (CWENO3), which in both cases, the polynomial reconstructions for the solid components are designed such that the sum of each solid component gives the total concentration of solids.
Positivity of each concentration component, and boundedness of the total concentration of solids are established directly from the definition of the MUSCL scheme, while limiters were implemented for the CWENO3 scheme. In this way, we adapted the standard Zhang and Shu \cite{zhang2010maximum,zhang2011maximum} limiters to achieve that the CWENO3 polynomials lie in the set of physically relevant solutions $\mathbcal{W}$ \eqref{eq:def:invariant:region:set:W}.

As a first result, we show that under an appropriate CFL condition \eqref{eq:CFL:convective}, each component of the numerical solution computed with the related first-order upwind-based scheme without compression effects (no diffusion) and forward Euler in time satisfies an invariant-region-preserving property in $\mathbcal{W}$.
Some ingredients used to obtain this result were the decreasing and Lipschitz continuity condition of the hindered-settling velocity $\vhs$ and the hypothesis on the reaction terms given in Assumption~\ref{asumption:reactionterms}. Unlike previous works \cite{SDIMA_MOL,SDAMM_SBR2,Burger2023}, these invariant-region properties for the first-order scheme were achieved without using a monotonicity argument.
The main theoretical results are the invariant-region-preserving property in $\mathbcal{W}$ for the high-order schemes with the MUSCL and CWENO3 schemes, respectively, with forward Euler time approximation, and also with the strong-stability preserving (SSP) second- and third-order TVD Runge-Kutta time discretizations. The drawback, in the case of the higher-order time discretization, is a more restrictive CFL condition.
For the approximation of the diffusion-related terms, we proposed two approaches, one discretizing the standard compression function $\mathcal{D}$ \cite{Burger2005} which corresponds to a degenerate diffusion function for the total concentration $u$, and a second approach for based on the factor function $\mathcal{B}$. In both cases, the diffusion-related flux $\mathcal{E}$ \eqref{eq:def:E} was included in the approximation of the total flux  $\mathcal{F}$ \eqref{eq:def:F} of the equation for the total concentration $u$, which was based on an upwind scheme combined with the corresponding polynomial reconstructions. This particular approximation of $\mathcal{F}$ \eqref{eq:def:F} including the diffusion-related flux $\mathcal{E}$, and the proposed approximation for the equation of the vector of solid components $\bB{c}$, was devised such that the sum of the solid components equals $u$. The numerical results corroborate the theoretical findings, in particular the invariant-region property, and also the order of accuracy in the case without diffusion. Notably, the numerical examples show that the proposed approaches to discretize the diffusion-related terms preserve the desired order of convergence, and also preserve the invariant-region property.

Regarding future extensions of this work, the main line of research is to establish the theoretical orders of accuracy of the proposed schemes for the diffusion terms, and to show that these schemes also preserve the invariant-region property. An analysis of the stability of the proposed numerical schemes also deserves to be the subject of future research. Further studies of the reactive sedimentation model in multidimensional context, either in axisymmetric domains or three dimensional ones, are of research interest. However, multidimensional models often require solving additional Navier-Stokes equations for the bulk velocity, which becomes a vector, and the so-called excess pore-pressure. Therefore, the bulk velocity is no longer a given function, leading to the need of additional polynomial reconstructions for this bulk velocity. Finally, as the model is written in terms of an arbitrary number of solid and substrate components, this work is suitable for the simulation of more general activated sludge processes involving several concentrations of solid particles and substrates, such as the ASM1 \cite{Henze2000} or general eutrophication processes \cite{Blauw2008, Gargallo2017, Markelov2019, Pauer2000, Tipping2016}.

\section*{Acknowledgments}

This work was partially supported by ANID-Chile through Fondecyt project No.~3230553. LMV is supported by ANID - Chile through of Centro de Modelamiento Matemático (CMM), project FB210005 of BASAL funds for Centers of Excellence. JBC is supported by the National Agency for Research and Development, ANID-Chile through the Scholarship Program, Becas Doctorado Nacional 2022, 21221387.

\bibliographystyle{plain}

\begin{thebibliography}{10}

\bibitem{barajas2025invariant}
J.~Barajas-Calonge, R.~B{\"u}rger, P.~Mulet, and L.M. Villada.
\newblock {Invariant-region-preserving WENO schemes for one-dimensional
  multispecies kinematic flow models}.
\newblock {\em J. Comput. Phys.}, 537:114081, 2025.

\bibitem{barajas2026second}
J.~Barajas-Calonge, R.~B{\"u}rger, P.~Mulet, and L.M. Villada.
\newblock {A Second-Order Invariant-Region-Preserving Scheme for a
  Transport-Flow Model of Polydisperse Sedimentation}.
\newblock {\em J. Sci. Comput.}, 108(1):29, 2026.

\bibitem{Blauw2008}
A.N. Blauw, H.F.J. Los, M.~Bokhorst, and P.L.A. Erftemeijer.
\newblock Gem: a generic ecological model for estuaries and coastal waters.
\newblock {\em Hydrobiologia}, 618(1), 2008.


\bibitem{SDIMA_MOL}
R.~B\"urger, J.~Careaga, and S.~Diehl.
\newblock A method-of-lines formulation for a model of reactive settling in
  tanks with varying cross-sectional area.
\newblock {\em IMA J. Appl. Math.}, 86(3):514--546, 2021.

\bibitem{SDcace_reactive}
R.~B{\"u}rger, J.~Careaga, S.~Diehl, C.~Mej\'{i}as, I.~Nopens, E.~Torfs, and
  P.~A. Vanrolleghem.
\newblock Simulations of reactive settling of activated sludge with a reduced
  biokinetic model.
\newblock {\em Computers Chem. Eng.}, 92:216--229, 2016.

\bibitem{SDAMM_SBR1}
R.~B\"{u}rger, J.~Careaga, S.~Diehl, and R.~Pineda.
\newblock A moving-boundary model of reactive settling in wastewater treatment.
  {P}art~1: Governing equations.
\newblock {\em Appl. Math. Modelling}, 106:390--401, 2022.

\bibitem{SDAMM_SBR2}
R.~B\"{u}rger, J.~Careaga, S.~Diehl, and R.~Pineda.
\newblock A moving-boundary model of reactive settling in wastewater treatment.
  {P}art~2: {N}umerical scheme.
\newblock {\em Appl. Math. Modelling}, 111:247--269, 2022.

\bibitem{Burger2023}
R.~B\"{u}rger, J.~Careaga, S.~Diehl, and R.~Pineda.
\newblock Numerical schemes for a moving-boundary convection-diffusion-reaction
  model of sequencing batch reactors.
\newblock {\em ESAIM: Math. Model. Numer. Anal.}, 57(5):2931--2976, 2023.

\bibitem{SDm2an_reactive}
R.~B\"{u}rger, S.~Diehl, and C.~Mej{\'{\i}}as.
\newblock A difference scheme for a degenerating convection-diffusion-reaction
  system modelling continuous sedimentation.
\newblock {\em {ESAIM}: Math. Modelling Num. Anal.}, 52(2):365--392, 2018.

\bibitem{Burger2005}
R.~B\"{u}rger, K.H. Karlsen, and J.D. Towers.
\newblock A model of continuous sedimentation of flocculated suspensions in
  clarifier-thickener units.
\newblock {\em SIAM J. Appl. Math.}, 65(3):882--940, 2005.

\bibitem{Chancelier1994}
J.-Ph. Chancelier, M.~Cohen~De Lara, and F.~Pacard.
\newblock {Analysis of a Conservation PDE With Discontinuous Flux: A Model of
  Settler}.
\newblock {\em SIAM J. Appl. Math.}, 54(4):954--995, 1994.

\bibitem{DeClercq2008}
J.~De Clercq, I.~Nopens, J.~Defrancq, and P.A. Vanrolleghem.
\newblock {Extending and calibrating a mechanistic hindered and compression
  settling model for activated sludge using in-depth batch experiments}.
\newblock {\em Water Res.}, 42(3):781--791, 2008.

\bibitem{Cravero2017}
I.~Cravero, G.~Puppo, M.~Semplice, and G.~Visconti.
\newblock Cweno: Uniformly accurate reconstructions for balance laws.
\newblock {\em Math. Comput.}, 87(312):1689--1719, 2017.

\bibitem{Cruz2012}
M.N.~Cruz Bournazou, H.~Arellano‐Garcia, G.~Wozny, G.~Lyberatos, and
  C.~Kravaris.
\newblock {ASM3 extended for two‐step nitrification–denitrification: a
  model reduction for sequencing batch reactors}.
\newblock {\em J. Chem. Technol. Biotechnol.}, 87(7):887--896, 2012.

\bibitem{Diehl1996}
S.~Diehl.
\newblock {A conservation Law with Point Source and Discontinuous Flux Function
  Modelling Continuous Sedimentation}.
\newblock {\em SIAM J. Appl. Math.}, 56(2):388--419, 1996.

\bibitem{FloresAlsina2012}
X.~Flores-Alsina, K.V. Gernaey, and U.~Jeppsson.
\newblock Benchmarking biological nutrient removal in wastewater treatment
  plants: influence of mathematical model assumptions.
\newblock {\em Water Sci. Technol.}, 65(8):1496–1505, 2012.

\bibitem{Gargallo2017}
S.~Gargallo, M.~Martín, N.~Oliver, and C.~Hernández-Crespo.
\newblock {Sedimentation and resuspension modelling in free water surface
  constructed wetlands}.
\newblock {\em Ecol. Eng.}, 98:318–329, 2017.

\bibitem{gottlieb1998total}
S.~Gottlieb and C.-W. Shu.
\newblock {Total variation diminishing Runge-Kutta schemes}.
\newblock {\em Math. Comput.}, 67(221):73--85, 1998.

\bibitem{gottlieb2001strong}
S.~Gottlieb, C.-W. Shu, and E.~Tadmor.
\newblock Strong stability-preserving high-order time discretization methods.
\newblock {\em SIAM review}, 43(1):89--112, 2001.

\bibitem{Henze1987}
M.~Henze, C.P.L. Grady, W.~Gujer, G.V.R. Marais, and T.~Matsuo.
\newblock {Activated Sludge Model No. 1}.
\newblock Sci. tech. rep. no. 1., IAWQ, London, UK, 1987.
\newblock No DOI assigned.

\bibitem{Henze2000}
M.~Henze, W.~Gujer, T.~Mino, and M.C.M. van Loosdrecht.
\newblock {\em Activated Sludge Models ASM1, ASM2, ASM2d and ASM3}.
\newblock IWA Publishing, London, UK, 2000.
\newblock No DOI assigned.

\bibitem{Jeppsson1993}
U.~Jeppsson and G.~Olsson.
\newblock Reduced order models for on-line parameter identification of the
  activated sludge process.
\newblock {\em Water Sci. Technol.}, 28(11-12):173--183, 1993.

\bibitem{Jiang1996}
G.-S. Jiang and C.-W. Shu.
\newblock Efficient implementation of weighted {ENO} schemes.
\newblock {\em J. Comput. Phys.}, 126(1):202--228, 1996.

\bibitem{kolgan1972application}
V.P. Kolgan.
\newblock {Application of the minimum-derivative principle in the construction
  of finite-difference schemes for numerical analysis of discontinuous
  solutions in gas dynamics}.
\newblock {\em Uch. Zap. TsAGI [Sci. Notes Central Inst. Aerodyn]},
  3(6):68--77, 1972.

\bibitem{kuzmin2024property}
D.~Kuzmin and H.~Hajduk.
\newblock {\em {Property-preserving numerical schemes for conservation laws}}.
\newblock World Scientific, 2024.

\bibitem{Kynch1952}
G.J. Kynch.
\newblock A theory of sedimentation.
\newblock {\em Trans. Faraday Soc.}, 48:166, 1952.

\bibitem{Julien1998}
P. Lessard S.~Julien, J.P.~Babary.
\newblock Theoretical and practical identifiability of a reduced order model in
  an activated sludge process doing nitrification and denitrification.
\newblock {\em Water Sci. Technol.}, 37(12), 1998.

\bibitem{LeVeque1990Green}
R.J. LeVeque.
\newblock {\em Numerical Methods for Conservation Laws}.
\newblock Birkh\"{a}user Basel, 1990.

\bibitem{Levy2000}
D.~Levy, G.~Puppo, and G.~Russo.
\newblock Compact central {WENO} schemes for multidimensional conservation
  laws.
\newblock {\em {SIAM} J.~Sci.~Comput.}, 22(2):656--672, 2000.

\bibitem{Li2014}
B.~Li and M.K. Stenstrom.
\newblock {Research advances and challenges in one-dimensional modeling of
  secondary settling Tanks – A critical review}.
\newblock {\em Water Res.}, 65:40--63, 2014.

\bibitem{Li2021}
J.~Li, C.-W. Shu, and J.~Qiu.
\newblock Multi-resolution {HWENO} schemes for hyperbolic conservation laws.
\newblock {\em J.~Comput.~Phys.}, 446:110653, 2021.

\bibitem{Markelov2019}
I.~Markelov, R.‐M. Couture, R.~Fischer, S.~Haande, and P.~Van Cappellen.
\newblock {Coupling Water Column and Sediment Biogeochemical Dynamics: Modeling
  Internal Phosphorus Loading, Climate Change Responses, and Mitigation
  Measures in Lake Vansjø, Norway}.
\newblock {\em J. Geophys. Res. Biogeosci.}, 124(12):3847--3866, 2019.

\bibitem{Ni2008}
B.-J. Ni and H.-Q. Yu.
\newblock An approach for modeling two-step denitrification in activated sludge
  systems.
\newblock {\em Chem. Eng. Sci.}, 63(6):1449--1459, 2008.

\bibitem{Liu1994}
S.~Osher X.-D.~Liu and T.~Chan.
\newblock Weighted essentially non-oscillatory schemes.
\newblock {\em J. Comput. Phys.}, 115(1):200--212, 1994.

\bibitem{Pauer2000}
J.~Pauer.
\newblock Nitrification in the water column and sediment of a hypereutrophic
  lake and adjoining river system.
\newblock {\em Water Res.}, 34(4):1247--1254, 2000.

\bibitem{Shu1998}
C.-W. Shu.
\newblock {\em Essentially non-oscillatory and weighted essentially
  non-oscillatory schemes for hyperbolic conservation laws}, pages 325--432.
\newblock Springer Berlin Heidelberg, 1998.

\bibitem{Shu2009}
C.-W. Shu.
\newblock High order weighted essentially nonoscillatory schemes for convection
  dominated problems.
\newblock {\em {SIAM} Review}, 51(1):82--126, 2009.

\bibitem{Shu1988}
C.-W. Shu and S.~Osher.
\newblock Efficient implementation of essentially non-oscillatory
  shock-capturing schemes.
\newblock {\em J. Comput. Phys.}, 77(2):439--471, 1988.

\bibitem{Tipping2016}
E.~Tipping, J.F. Boyle, D.N. Schillereff, B.M. Spears, and G.~Phillips.
\newblock {Macronutrient processing by temperate lakes: A dynamic model for
  long-term, large-scale application}.
\newblock {\em Sci. Total Environ.}, 572:1573--1585, 2016.

\bibitem{van1979towards}
B.~Van Leer.
\newblock {Towards the ultimate conservative difference scheme. V. A
  second-order sequel to Godunov's method}.
\newblock {\em J. Comput. Phys.}, 32(1):101--136, 1979.

\bibitem{vanHaandel1981}
A.~van Haandel, G.~Ekama, and G.~Marais.
\newblock The activated sludge process—3 single sludge denitrification.
\newblock {\em Water Res.}, 15(10):1135–1152, 1981.

\bibitem{xing2013positivity}
Y.~Xing and X.~Zhang.
\newblock {Positivity-preserving well-balanced discontinuous Galerkin methods
  for the shallow water equations on unstructured triangular meshes}.
\newblock {\em J. Sci. Comput.}, 57(1):19--41, 2013.

\bibitem{xing2010positivity}
Y.~Xing, X.~Zhang, and C.-W. Shu.
\newblock {Positivity-preserving high order well-balanced discontinuous
  Galerkin methods for the shallow water equations}.
\newblock {\em Adv. Water Resour.}, 33(12):1476--1493, 2010.

\bibitem{zhang2010maximum}
X.~Zhang and C.-W. Shu.
\newblock {On maximum-principle-satisfying high order schemes for scalar
  conservation laws}.
\newblock {\em J. Comput. Phys.}, 229(9):3091--3120, 2010.

\bibitem{zhang2010positivity}
X.~Zhang and C.-W. Shu.
\newblock {On positivity-preserving high order discontinuous Galerkin schemes
  for compressible Euler equations on rectangular meshes}.
\newblock {\em J. Comput. Phys.}, 229(23):8918--8934, 2010.

\bibitem{zhang2011maximum}
X.~Zhang and C.-W. Shu.
\newblock {Maximum-principle-satisfying and positivity-preserving high-order
  schemes for conservation laws: survey and new developments}.
\newblock {\em Proc. R. Soc. A: Math. Phys. Eng. Sci.}, 467(2134):2752--2776,
  2011.

\bibitem{zhang2011positivity}
X.~Zhang and C.-W. Shu.
\newblock {Positivity-preserving high order discontinuous Galerkin schemes for
  compressible Euler equations with source terms}.
\newblock {\em J. Comput. Phys.}, 230(4):1238--1248, 2011.

\bibitem{Zhu2018}
J.~Zhu and C.-W. Shu.
\newblock A new type of multi-resolution {WENO} schemes with increasingly
  higher order of accuracy.
\newblock {\em J.~Comput.~Phys.}, 375:659--683, 2018.

\end{thebibliography}

\end{document}